\DeclareFontFamily{U}  {MnSymbolC}{}
\DeclareFontShape{U}{MnSymbolC}{m}{n}{
    <-6>  MnSymbolC5
   <6-7>  MnSymbolC6
   <7-8>  MnSymbolC7
   <8-9>  MnSymbolC8
   <9-10> MnSymbolC9
  <10-12> MnSymbolC10
  <12->   MnSymbolC12}{}
\DeclareSymbolFont{MnSyC}         {U}  {MnSymbolC}{m}{n}
\DeclareMathSymbol{\lefthalfcup}{\mathbin}{MnSyC}{183}
\DeclareMathSymbol{\righthalfcup}{\mathbin}{MnSyC}{184}
\renewcommand{\PrintDOI}[1]{%
  \href{http://dx.doi.org/#1}{{\tt DOI:#1}}%
%  \IfEmptyBibField{volume}{, (to appear in print)}{}%
}
\renewcommand{\eprint}[1]{#1}
\numberwithin{equation}{section}
\newtheorem{theorem}{Theorem}[section]
\newtheorem{corollary}[theorem]{Corollary}
\newtheorem{lemma}[theorem]{Lemma}
\newtheorem{proposition}[theorem]{Proposition}
\theoremstyle{remark}
\newtheorem{remark}[theorem]{Remark}
\theoremstyle{definition}
\newcommand{\bp}{\begin{proof}}
\newcommand{\ep}{\end{proof}}
\mathchardef\mhyph="2D
\DeclareMathOperator{\Ad}{Ad}
\DeclareMathOperator{\Aut}{Aut}
\DeclareMathOperator{\End}{End}
\DeclareMathOperator{\Hom}{Hom}
\DeclareMathOperator{\Irr}{Irr}
\DeclareMathOperator{\Rep}{Rep}
\DeclareMathOperator{\rk}{rk}
\DeclareMathOperator{\Tr}{Tr}
\DeclareMathOperator{\Prim}{Prim}
\DeclareMathOperator{\Obj}{Obj}
\newcommand{\C}{\mathbb{C}}
\newcommand{\R}{\mathbb{R}}
\newcommand{\T}{\mathbb{T}}
\newcommand{\Z}{\mathbb{Z}}
\newcommand{\A}{\mathcal{A}}
\newcommand{\CC}{\mathcal{C}}
\newcommand{\E}{\mathcal{E}}
\newcommand{\PP}{\mathcal{P}}
\newcommand{\DD}{\mathcal{D}}
\newcommand{\U}{\mathcal{U}}
\newcommand{\id}{\mathrm{id}}%as map, morphism
\newcommand{\Id}{\mathrm{Id}}%as functor
\newcommand{\Hilb}{\mathrm{Hilb}}
\newcommand{\su}{\mathfrak{su}}
\newcommand{\so}{\mathfrak{so}}
\newcommand{\SU}{\mathrm{SU}}
\newcommand{\SL}{\mathrm{SL}}
\newcommand{\Spin}{\mathrm{Spin}}
\newcommand{\Sp}{\mathrm{Sp}}
\newcommand{\SO}{\mathrm{SO}}
\newcommand{\rU}{\mathrm{U}}
\newcommand{\un}{{\mathds 1}}
\newcommand{\absv}[1]{\left| #1 \right|}
\newcommand{\Cliff}{\mathrm{Cl}}
\newcommand{\half}{\frac{1}{2}}
\newcommand{\odd}{\mathrm{odd}}
\newcommand{\even}{\mathrm{even}}
\newcommand{\Dhat}{\hat{\Delta}}
\newcommand{\circt}%
{\mathbin{%
\mathchoice
{\ooalign{$\ocircle$\cr\hidewidth\raise-.15ex\hbox{$\scriptstyle\top\mkern2.05mu$}\cr}}% Woronowicz style tensor product, USUAL SIZE
{\ooalign{$\ocircle$\cr\hidewidth\raise-.15ex\hbox{$\scriptstyle\top\mkern2.05mu$}\cr}}% Woronowicz style tensor product, USUAL SIZE
{\ooalign{$\scriptstyle\ocircle$\cr\hidewidth\raise-.12ex\hbox{$\scriptscriptstyle\top\mkern1mu$}\cr}}% Woronowicz style tensor product, SCRIPT SIZE
{\ooalign{$\scriptstyle\ocircle$\cr\hidewidth\raise-.12ex\hbox{$\scriptscriptstyle\top\mkern1mu$}\cr}}% Woronowicz style tensor product, SCRIPT SIZE
}}
\begin{document}

\title[non-Kac $\SU(n)$-type quantum groups]{Classification of non-Kac compact quantum groups of $\SU(n)$ type}

\author[S. Neshveyev]{Sergey Neshveyev}

\email{sergeyn@math.uio.no}

\address{Department of Mathematics, University of Oslo,
P.O. Box 1053 Blindern, NO-0316 Oslo, Norway}

\thanks{The research leading to these results has received funding from the European Research Council
under the European Union's Seventh Framework Programme (FP/2007-2013) / ERC Grant Agreement no. 307663%--NCGQG
}

\author[M. Yamashita]{Makoto Yamashita}

\email{yamashita.makoto@ocha.ac.jp}

\address{Department of Mathematics, Ochanomizu University,
Otsuka 2-1-1, Bunkyo, 112-8610 Tokyo, Japan}

\thanks{Supported by the Danish National Research Foundation through the Centre
for Symmetry and Deformation (DNRF92), and by JSPS KAKENHI Grant Number 25800058}

\date{May 26, 2014; minor changes August 4, 2015}

\begin{abstract}
We classify up to isomorphism all non-Kac compact quantum groups with the same fusion rules and dimension function as $\SU(n)$. For this we first prove, using categorical Poisson boundary, the following general result. Let $G$ be a coamenable compact quantum group and $K$ be its maximal quantum subgroup of Kac type. Then any dimension-preserving unitary fiber functor $\Rep G\to \Hilb_f$ factors, uniquely up to isomorphism, through $\Rep K$. Equivalently, we have a canonical bijection $H^2(\hat G;\T)\cong H^2(\hat K;\T)$. Next, we classify autoequivalences of the representation categories of twisted $q$-deformations of compact simple Lie groups.
\end{abstract}

\maketitle

\bigskip

\section{Introduction}

In his fundamental paper~\cite{MR943923} on quantization of $\SU(n)$ and the Tannaka--Krein duality for compact quantum groups,  Woronowicz formulated the following  problem: classify quantum groups having the same representation theory, meaning the same fusion rules and dimensions of irreducible representations, as $\SU(n)$. Since then, this and similar questions for other representation rings have been studied by a number of authors, see e.g.,~\citelist{\cite{MR1266253} \cite{MR1629723} \cite{MR1378260} \cite{MR1673475} \cite{MR1809304} \cite{MR2023750} \cite{MR2106933} \cite{MR3275027} \cite{MR3240820}} and references therein.

Despite some success, a complete answer has only been obtained in a few low rank cases. At the current stage this is hardly surprising, and a complete explicit answer to Woronowicz's question is not to be expected. Indeed, such an answer would include understanding of all unitary fiber functors on $\Rep\SU(n)$, which is equivalent to classifying full multiplicity ergodic actions of $\SU(n)$ on C$^*$-algebras~\citelist{\cite{MR990110}\cite{MR1190512}}. As it was known already to Wassermann~\cite{MR990110} (see also the discussion in~\cite{MR2844801}*{pp.~1240--1241}) this, in turn, includes classifying the finite central type factor groups inside~$\SU(n)$ up to conjugacy. For small $n$ there are not many such subgroups, and for $n=2,3$ it is, indeed, possible to classify all full multiplicity ergodic actions~\citelist{\cite{MR948104}\cite{MR996457}}. However, the problem rapidly becomes unfeasible as~$n$ grows larger. Since fiber functors on $\Rep\SU(n)$ lead to Kac quantum groups, we therefore should not expect an explicit answer to Woronowicz's question for the class of compact quantum groups of Kac type. Note that nevertheless for genuine compact groups we have a complete answer: by a result of McMullen~\cite{MR733774}, a compact group with the same fusion rules as for $\SU(n)$ is itself isomorphic to $\SU(n)$, see also~\citelist{\cite{MR1209960}\cite{MR3207584}}.

We can also forget about compactness and try  to describe all cosemisimple Hopf algebras with corepresentation theory of $\SU(n)$. For $n=2$ this was done by Podle{\'s} and M{\"u}ller~\cite {MR1629723}, and by Bichon~\cite{MR2023750} without the restriction on the dimension function, extending a result of Banica in the compact case~\cite{MR1378260}. For $n=3$, a complete classification of such Hopf algebras was obtained by Ohn~\citelist{\cite{MR1673475} \cite{MR2106933}}, where he obtained a long list of various multiparametric deformations through a formidable amount of direct computations. Doing anything similar for $n\ge4$ seems like an overwhelming task, even with computer assistance.

Our main result is that if we stay away from the Kac and noncompact cases, the question of Woronowicz has a very simple answer: the only quantum groups we have are~$\SU_q(n)$ for $0<q<1$, the categorical twists $\SU^\tau_q(n)$ of $\SU_q(n)$ studied in our previous paper~\cite{MR3340190}, and the deformations of such quantum groups by $2$-cocycles on the dual of the maximal torus, see Theorems~\ref{thm:answer-Woronowicz-non-Kac} and~\ref{thm:answer-Woronowicz-non-Kac2} for the precise statement. To put it differently, all such compact quantum groups are obtained by quantization of one of the Poisson--Lie group structures on $\SU(n)$~\cite{MR1116413} and by twisting by a $3$-cocycle on the Pontryagin dual of the center.

It is worth remembering that, on the purely algebraic level, compact quantum groups are simply Hopf $*$-algebras generated by matrix coefficients of their finite dimensional unitary corepresentations. Therefore our result classifies all such Hopf $*$-algebras with corepresentation theory of~$\SU(n)$ and noninvolutive antipode. For $n=3$ this implies that a majority of Hopf algebras in the list of Ohn are of Kac type, do not admit a compatible $*$-structure, or have nonunitarizable corepresentations, which can also be checked by a careful inspection of his classification.

\smallskip

In view of the Tannaka--Krein duality, the classification problem can be divided into three parts:
\begin{enumerate}
\item[--] classification of rigid C$^*$-tensor categories $\CC$ with fusion rules of $\SU(n)$;
\item[--] classification of monoidal autoequivalences of $\CC$;
\item[--] classification of unitary fiber functors $\CC\to\Hilb_f$ inducing the classical dimension function on the representation ring of $\SU(n)$.
\end{enumerate}

The first problem was solved in the purely algebraic setting by Kazhdan and Wenzl~\cite{MR1237835}, and the modifications needed in the C$^*$-setting have been carried out in the recent paper by Jordans~\cite{MR3266525}. Their result states that any such C$^*$-tensor category is obtained by twisting $\Rep \SU_q(n)$ by a $3$-cocycle on its chain group (which is naturally isomorphic to the dual of the center of $\SU(n)$) for a uniquely determined $q\in(0,1]$. It should be remarked that there exist only partial results extending this classification to other compact connected simple Lie groups~\cite{MR2132671}, and this is essentially the only reason which does not allow us to extend our results to all such groups in place of~$\SU(n)$.

As for the second problem, the classification of monoidal autoequivalences of $\Rep\SU_q(n)$, and more generally of $\Rep G_q$ for any compact connected semisimple Lie group $G$, was obtained in~\citelist{\cite{MR2782190}\cite{MR2959039}}. From this it is easy to deduce a similar result for the $3$-cocycle twists of $\Rep \SU_q(n)$. The situation for twists of $\Rep G_q$ is slightly more complicated, exactly because we do not have a complete classification of categories with fusion rules of $G$. Nevertheless, with a bit more effort we can obtain a complete classification of monoidal autoequivalences for simple and simply connected~$G$.

Therefore the main remaining problem is classification of unitary fiber functors on the twists of $\Rep\SU_q(n)$ (or more generally of $\Rep G_q$), which induce the classical dimension function on the representation ring. In an earlier paper~\cite{MR3340190} we already constructed some fiber functors and studied the corresponding compact quantum groups $G^\tau_q$. In the present paper we show that for $q\ne1$, which exactly corresponds to the non-Kac case, all dimension-preserving unitary fiber functors on $\Rep G^\tau_q$ factor through $\Rep T$ in an essentially unique way, where $T$ is the maximal torus. This is deduced from a general result on dual $2$-cohomology of coamenable compact quantum groups obtained using a universal property of categorical Poisson boundaries we established in~\cite{poisson-bdry-monoidal-cat}. Finally, let us note that currently this last part is the only one place where it is essential to work within the operator algebraic framework, as the proofs in~\cite{poisson-bdry-monoidal-cat} heavily utilize the techniques from operator theory and subfactor theory.

\bigskip

\section{Preliminaries}

\subsection{Monoidal categories}

In this paper we deal with nonstrict C$^*$-tensor categories, following the conventions of~\cite{neshveyev-tuset-book}. For the convenience of the reader we briefly recall the basic terminology and notation.

A $\C$-linear category $\CC$ is a \emph{C$^*$-category} if its morphism sets $\CC(X, Y)$ are Banach spaces endowed with a conjugate linear and isometric involution $\CC(X, Y) \to \CC(Y, X)$, $T \mapsto T^*$, satisfying
\begin{align*}
(S T)^* &= T^* S^*,&
\| T^* T \| &= \| T \|^2.
\end{align*}
We always assume that such categories are closed under direct sums and subobjects.

A \emph{C$^*$-tensor category} is a C$^*$-category together with a bifunctor $\otimes\colon \CC \times \CC \to \CC$ and a distinguished object $\un = \un_\CC \in \Obj \CC$, together with natural unitary isomorphisms
\begin{align*}
\un \otimes X &\to X \leftarrow X \otimes \un,&
\Phi\colon (X \otimes Y) \otimes Z &\to X \otimes (Y \otimes Z)
\end{align*}
satisfying a number of conditions. We will always assume that the unit object is simple, meaning that $\CC(\un, \un) \simeq \C$.

A \emph{unitary tensor functor} between two C$^*$-tensor categories is given by a triple $F = (F_0, F_1, F_2)$ of the following form: $F_1$ is a linear functor $\CC \to \DD$ satisfying $F_1(T^*) = F_1(T)^*$, $F_0$ is a unitary isomorphism $\un \to F_1(\un)$, and $F_2$ is a natural unitary isomorphism $F_1(X) \otimes F_1(Y) \to F_1(X \otimes Y)$, satisfying the following compatibility condition for the associativity morphisms:
\begin{equation}\label{eq:tensor-functor-assoc-compat}
\xymatrix{
(F_1(X) \otimes F_1(Y)) \otimes F_1(Z) \ar[d]_{\Phi^\DD} \ar[r]^{\ \ \ \ F_2 \otimes \iota} & F_1(X \otimes Y) \otimes F_1(Z) \ar[r]^{F_2} & F_1((X \otimes Y) \otimes Z) \ar[d]^{F_1(\Phi^\CC)}\\
F_1(X) \otimes (F_1(Y) \otimes F_1(Z)) \ar[r]_{\ \ \ \ \iota \otimes F_2} & F_1(X) \otimes F_1(Y \otimes Z) \ar[r]_{F_2} & F_1(X\otimes(Y\otimes Z))
},
\end{equation}
and another set of compatibility conditions for the tensor units $\un_\CC$ and $\un_\DD$ and the isomorphism~$F_0$. If there is no fear of confusion we also write $F$ in place of $F_1$.  If we are given two unitary tensor functors $F \colon \CC \to \DD$ and $F'\colon \DD \to \DD'$, their composition $F' F \colon \CC \to \DD'$ is given by the triple $(F'_0 F'_1(F_0), F'_1 F_1, F'_1(F_2) F'_2)$.

Given two unitary tensor functors $F$ and $F'$ from $\CC$ to $\DD$, a natural unitary monoidal transformation $\eta\colon F \to F'$ is given by a natural transformation $\eta_X \colon F_1(X) \to F'_1(X)$ of functors $F_1$ and $F'_1$ by unitary morphisms $(\eta_X)_{X \in \Obj \CC}$ which satisfy $F_0 = F'_0 \eta_\un$ and $\eta_{X \otimes Y} F_2 = F'_2 (\eta_X \otimes \eta_Y)$. Such $F$ and $F'$ are said to be \emph{naturally unitarily monoidally isomorphic} if there are natural unitary monoidal transformations $\eta\colon F \to F'$ and $\eta'\colon F' \to F$ such that $\eta'_X \eta_X = \id_{F_1(X)}$ and $\eta_X \eta'_X = \id_{F'_1(X)}$.  Moreover, a unitary tensor functor $F\colon \CC \to \DD$ is said to be a unitary monoidal equivalence if there is another unitary tensor functor $G \colon \DD \to \CC$ such that $G F$ and $F G$ are naturally unitarily monoidally isomorphic to the identity functors of $\CC$ and $\DD$, respectively. We denote the group of unitary monoidal autoequivalences of $\CC$, considered up to natural unitary monoidal isomorphisms, by~$\Aut^\otimes(\CC)$.

An important class of C$^*$-tensor categories is \emph{rigid} C$^*$-tensor categories, in which any object has a dual. Namely, an object $Y \in \Obj \CC$ is a dual of $X \in \Obj \CC$, denoted as $Y = \bar{X}$, if there are morphisms $R_X \in \CC(\un, \bar{X} \otimes X)$ and $\bar{R}_X \in \CC(\un, X \otimes \bar{X})$ which satisfy the \emph{conjugate equations}
\begin{align*}
(R_X^* \otimes \iota_{\bar{X}}) \Phi^{-1} (\iota_{\bar{X}} \otimes \bar{R}_X) &= \iota_{\bar{X}},&
(\bar{R}_X^* \otimes \iota_{X}) \Phi^{-1} (\iota_{X} \otimes R_X) &= \iota_{X},
\end{align*}
up to the structure morphisms of the unit. The \emph{intrinsic dimension} (also called the quantum dimension) of $X$ is defined as
$$
d^\CC(X) = \inf_{(R_X, \bar{R}_X)} \left\| R_X \right\| \lVert \bar{R}_X \rVert,
$$
where $(R_X, \bar{R}_X)$ runs through the solutions of the conjugate equations for $X$. We will often omit the superscript $\CC$. A choice of $(R_X, \bar{R}_X)$ such that $d(X)^{1/2}=\left\| R_X \right\|= \lVert \bar{R}_X \rVert$ is called a \emph{standard solution}.

\subsection{Compact quantum groups}

Next let us review a few standard facts about compact quantum groups.  See again~\cite{neshveyev-tuset-book} for the details.

Let $G$ be a compact quantum group, given by a unital cancellative C$^*$-bialgebra $(C(G), \Delta)$. A finite dimensional \emph{unitary representation} of $G$ is given by a unitary element $U \in B(H_U) \otimes C(G)$ for some finite dimensional Hilbert space $H_U$ (the underlying Hilbert space of $U$), which satisfies the equality $U_{1 2} U_{1 3} = (\iota \otimes \Delta)(U)$ in $B(H_U) \otimes B(H_U) \otimes C(G)$. Here, $U_{i j}$ is given by distributing the legs of~$U$ to the positions indicated by the matching numbers, so that $U_{1 3}$ equals $\sum_i x_i \otimes 1 \otimes y_i$ if $U$ is given by $\sum_i x_i \otimes y_i$. We denote by $\dim U$ the dimension of $H_U$ and call it the \emph{classical dimension} of~$U$.

The category of finite dimensional unitary representations of $G$, denoted by $\Rep G$, has a natural structure of a C$^*$-tensor category with the tensor product $U \circt V = U_{1 3} V_{2 3}$, so that  $H_{U \circt V}=H_U \otimes H_V$. Any object $U$ of $\Rep G$ has a dual object $\bar U$, realized on the conjugate space~$\bar{H}_U$. Woronowicz's Tannaka--Krein duality theorem says that any compact quantum group $G$ can be recovered from $\Rep G$, which has an irreducible unit and duality of objects, and the unitary tensor functor (the \emph{canonical fiber functor of $G$}) $\Rep G \to \Hilb_f, U \mapsto H_U$.

A crucial part of the proof of the Tannaka--Krein duality is the reconstruction of the \emph{regular algebra} $\C[G] \subset C(G)$, defined as the linear span of the elements $(\omega \otimes \iota)(U)$, where $\omega \in B(H_U)_*$ and~$U$ runs through the finite dimensional unitary representations of $G$. It is a Hopf $*$-algebra, with the coproduct defined by the restriction of $\Delta$. As  a linear space it can be identified with
\begin{equation}
\label{eq:reg-alg-pres}% refered forrm Sec 1.4
\bigoplus_{[U] \in \Irr G} \bar{H}_U \otimes H_U \cong \bigoplus_{[U] \in \Irr G} B(H_U)_*.
\end{equation}
The projection $h$ onto the direct summand $\bar{H}_\un \otimes H_\un \simeq \C$ extends to a unique invariant state on~$C(G)$, called the \emph{Haar state}. The C$^*$-algebra generated by $C(G)$ in the GNS-representation associated with~$h$, called the \emph{reduced form} of $G$, is denoted by $C^r(G)$. On the other hand, the universal C$^*$-algebraic envelope of $\C[G]$ is denoted by $C^f(G)$.

There is a unique element $\rho=f_1\in\C[G]^*$, called the \emph{Woronowicz character}, such that for any finite dimensional unitary representation $U$ of $G$ the operator $\rho_U=(\iota\otimes\rho)(U)\in B(H_U)$ is positive, $\Tr(\rho_U)=\Tr(\rho_U^{-1})$ and
$$
(\iota\otimes S^2)(U)=(\rho_U\otimes1)U(\rho_U^{-1}\otimes 1),
$$
where $S$ is the antipode on $\C[G]$. Then the dual unitary representation $\bar U$ can be defined~by
$$
\bar U=(j\otimes\iota)\big((\rho_U^{-1/2}\otimes1)U^*(\rho_U^{1/2}\otimes1)\big)\in B(\bar H_U)\otimes\C[G],
$$
where $j\colon B(H_U)\to B(\bar H_U)$ is the canonical $*$-anti-isomorphism given by $j(T)\bar\xi=\overline{T^*\xi}$.
Consider the maps $r_U\colon\C\to \bar H_U\otimes H_U$ and $\bar r_U\colon\C\to H_U\otimes\bar H_U$ defined by $r_U(1)=\sum_i\bar e_i\otimes e_i$ and $\bar r_U(1)=\sum_i e_i\otimes \bar e_i$, where $\{e_i\}_i$ is any choice of an orthonormal basis in~$H_U$ (the maps $r_U$ and $\bar{r}_U$ are independent of this choice).  Then, with $\bar U$ defined as above, as a standard solution of the conjugate equations for~$U$ we can take
$$
R_U=(\iota\otimes \rho_U^{-1/2})r_U\ \ \text{and}\ \ \bar R_U=(\rho_U^{1/2}\otimes\iota)\bar r_U.
$$

The element $\rho$ defines also a one-parameter group $(\tau_t)_{t\in\R}$ of Hopf $*$-algebra automorphisms of~$\C[G]$~by
$$
(\iota\otimes \tau_t)(U)=(\rho_U^{it}\otimes1)U(\rho_U^{-it}\otimes 1),
$$
which is called the \emph{scaling group}.

A compact quantum group $G$ is said to be \emph{coamenable} if the counit of $\C[G]$ extends to a bounded linear functional on $C^r(G)$. This is equivalent to $C^f(G) = C^r(G)$, but for our purposes, the following characterization is more important.
Let~$\Gamma_U$ be the operator on $\ell^2 (\Irr( G))$ defined by
$$
\Irr(G)\ni[V] \mapsto \sum_{[W]\in\Irr(G)} \dim\big(\Hom_G(H_W, H_{V \circt U})\big) [W].
$$
Then $\|\Gamma_U\|\le\dim U$, and $G$ is coamenable if and only if the equality holds for all $U$.

Another important class of compact quantum groups is quantum groups of \emph{Kac type}, characterized by any of the following conditions: 1) the antipode $S$ of $\C[G]$ satisfies $S^2 = \id$, 2) $\dim U = d(U)$ for all $U$ in $\Rep G$, 3) $h$ is a tracial state, 4) $(\tau_t)_{t\in\R}$ is trivial.

\subsection{Maximal Kac quantum subgroup}

When $G$ is a compact quantum group, there exists a unique \emph{maximal Kac quantum subgroup} $K$ of $G$, which first appeared in work of So{\l}tan~\cite{MR2210362}. In other words, $K$ is a compact quantum group of Kac type, we have a surjective homomorphism $\C[G]\to\C[K]$ of Hopf $*$-algebras, and if $H$ is any other compact quantum group with the same properties, then $\C[G]\to\C[H]$ factors through $\C[G]\to\C[K]$.

Explicitly, the quantum group $K$ can be described as follows. The ideal $I\subset \C[G]$ generated by the elements $a-S^2(a)$ for all $a\in\C[G]$ is easily seen to be a Hopf $*$-ideal. Hence the quotient $\C[G]/I$ is a Hopf $*$-algebra with involutive antipode, so it defines a closed quantum subgroup $K$ of $G$ of Kac type.  Clearly, $K$ has the required maximality property. One also says that the Hopf $*$-algebra $\C[K]$ is the \emph{canonical Kac quotient} of~$\C[G]$.

Alternatively, $K$ can be described as follows, which is the original definition of So{\l}tan~\cite{MR2210362}. Let~$J$ be the intersection of the kernels of the GNS-representations of $C^f(G)$ defined by all tracial states. Then we can put $C(K)=C^f(G)/J$. In other words, $I=J\cap\C[G]$, so the ideal $I$ consists of the elements $a\in\C[G]$ such that $\tau(a^*a)=0$ for all tracial states $\tau$ on $\C[G]$.

If $G$ is coamenable, $K$ can be also found from the noncommutative Poisson boundary of $\hat G$~\cite{MR1916370} as $L^\infty(G/K) \simeq H^\infty(\hat{G}; \mu)$ for any ergodic probability measure $\mu$ on $\Irr(G)$~\citelist{\cite{MR2335776}\cite{MR3291643}}. One of our main observations, which will be exploited in Section~\ref{sec:fact-fib-funct}, is that this characterization of maximal Kac quantum subgroups manifests itself already at the categorical level.

\subsection{Cohomology of the discrete dual}

Deformation problems for compact quantum groups are controlled by cohomology theory of the dual quantum groups, which plays the central role in this paper. We again refer the reader to~\cite{neshveyev-tuset-book} for a more thorough discussion.

Denote by $\U(G)$ the dual space $\C[G]^*$ of $\C[G]$. It is a $*$-algebra, which is canonically isomorphic to  the algebraic direct product $\prod_{[U] \in \Irr G} B(H_U)$. Namely, a linear functional $\omega\in \C[G]^*$ defines operators $\pi_U(\omega) = (\iota\otimes\omega)(U)\in B(H_U)$, and by~\eqref{eq:reg-alg-pres} the information about $\omega$ is precisely given by the family $(\pi_U(\omega))_{[U]\in\Irr G}$. More generally, we put
$$
\U(G^k) =(\C[G]^{\otimes k})^*\cong \prod_{[U_1],\cdots, [U_k] \in \Irr G} B(H_{U_1}) \otimes \cdots \otimes B(H_{U_k}).
$$
We may interpret $\U(G^k)$ as the space of (possibly unbounded) $k$-point functions on the ``discrete dual'' quantum group $\hat{G}$. If $G$ is a genuine commutative compact group, this agrees with the usual notion of functions on the direct product of the Pontryagin dual group $\hat{G}$.

A \emph{$2$-cochain} on $\hat{G}$ is an invertible element $\E \in \U(G^2)$. It is said to be \emph{invariant} if it commutes with the image of the ``coproduct''$\hat{\Delta}\colon\U(G)\to\U(G^2)$ obtained by duality from the product on $\C[G]$. A $2$-cochain $T$ is said to be a \emph{$2$-cocycle} if it satisfies
$$
(\E \otimes 1) (\hat{\Delta} \otimes \iota)(\E) = (1 \otimes \E)(\iota \otimes \hat{\Delta})(\E).
$$
Invariant cocycles are also called \emph{lazy} in the algebraic literature.
If $c$ is an invertible element in the center of $\U(G)$ (an \emph{invariant $1$-cochain}), then $(c \otimes c) \hat{\Delta}(c^{-1})$ is a $2$-cocycle. Such cocycles are called invariant \emph{$2$-coboundaries}. The set of invariant $2$-cocycles form a group under multiplication, and the invariant $2$-coboundaries form a subgroup. The quotient is denoted by $H^2_G(\hat{G}; \C^\times)$, and called the \emph{invariant $2$-cohomology group} of $\hat G$. If we restrict to the unitary elements instead of invertible elements throughout, the corresponding group is denoted by $H^2_G(\hat{G}; \T)$.

If $\E$ is an invariant unitary $2$-cocycle, the multiplication by $\E^{-1}$ on $H_U \otimes H_V$ can be considered as a unitary endomorphism of $U \circt V$ in $\Rep G$. Such endomorphisms form a natural unitary transformation of the bifunctor $\circt$ into itself. The cocycle condition corresponds to the fact that this transformation is a monoidal autoequivalence of $\Rep G$. Up to natural unitary monoidal isomorphisms, any autoequivalence of $\Rep G$ fixing the irreducible classes can be obtained in this way. Moreover, the cohomology relation of cocycles corresponds to the natural unitary monoidal isomorphism of autoequivalences. Thus $H^2_G(\hat{G}; \T)$ can be considered as the subgroup of $\Aut^\otimes(\Rep G)$ consisting of autoequivalences that preserve the isomorphism classes of objects. Without the unitarity, $H^2_G(\hat{G}; \C^\times)$ corresponds to a subgroup of monoidal autoequivalences of $\Rep G$ as a tensor category over $\C$.

\smallskip

Let $\E$ be an arbitrary unitary $2$-cocycle on $\hat{G}$, invariant or not, and $F\colon\Rep G\to\Hilb_f$ be the canonical fiber functor.  Then the triple $F_\E = (\id_\C, U \mapsto H_U, \E^{-1})$ defines another unitary tensor functor $\Rep G \to \Hilb_f$. By Woronowicz's Tannaka--Krein duality $F_\E$ can be considered as the canonical fiber functor of another compact quantum group $G_\E$ satisfying $\Rep G = \Rep G_\E$. Concretely, $\U(G_\E)$ coincides with $\U(G)$ as a $*$-algebra, but is endowed with the modified coproduct $\hat{\Delta}_\E(T) = \E \hat{\Delta}(T) \E^{-1}$.  By duality, $\C[G_\E]$ is the same coalgebra as $\C[G]$, but has a modified $*$-algebra structure dual to $\hat{\Delta}_\E$. Up to natural unitary monoidal isomorphisms, the functors $F_\E$ exhaust all unitary tensor functors $F'\colon\Rep G\to\Hilb_f$ satisfying $\dim F'(U) = \dim H_U$. Moreover, if $T$ is a unitary element in $\U(G)$ (a \emph{unitary $1$-cochain}), then $\E_T = (T \otimes T) \E \hat{\Delta}(T^{-1})$ defines another unitary fiber functor which is naturally unitarily monoidally isomorphic to $F_\E$. Thus, the \emph{$2$-cohomology} $H^2(\hat{G}; \T)$, which is the quotient of the set of unitary $2$-cocycles by the cohomology relation $\E \sim \E_T$, gives a complete parametrization of such unitary fiber functors up to natural unitary monoidal isomorphisms. We also note that there is an action of the group $H^2_G(\hat{G}; \T)$ on the set $H^2(\hat{G}; \T)$ given by multiplication on the right. This corresponds to the restriction of the obvious right action of $\Aut^\otimes(\Rep G)$ on the natural unitary monoidal isomorphism classes of unitary fiber functors $\Rep G \to \Hilb_f$.

\smallskip

A \emph{$3$-cocycle} on $\hat{G}$ is an invertible element $\Phi \in \U(G^3)$ which satisfies
$$
(1 \otimes \Phi)(\iota \otimes \hat{\Delta} \otimes \iota)(\Phi) (\Phi \otimes 1) = (\iota \otimes \iota \otimes \hat{\Delta})(\Phi) (\hat{\Delta} \otimes \iota \otimes \iota)(\Phi).
$$
Again, such a cocycle is called invariant if it commutes with the image of $\hat{\Delta}^{(2)} = (\hat{\Delta} \otimes \iota) \hat{\Delta} = (\iota \otimes \hat{\Delta}) \hat{\Delta}$. Such cocycles are also called \emph{associators}.  If $\Phi$ is an invariant unitary $3$-cocycle and $\E$ is an invariant unitary $2$-cochain, the $3$-cochain
$$
\Phi_\E = (1 \otimes \E) (\iota \otimes \hat{\Delta})(\E) \Phi (\hat{\Delta} \otimes \iota)(\E^{-1}) (\E^{-1}\otimes1)
$$
is again an invariant unitary $3$-cocycle. Cocycles of the form $\Phi_\E$ are said to be cohomologous to~$\Phi$.  This defines an equivalence relation on the set of invariant unitary $3$-cocycles, and the quotient set is called the \emph{invariant unitary $3$-cohomology} $H^3_G(\hat{G}; \T)$.

If $\Phi$ is an invariant unitary $3$-cocycle, its action on $H_U \otimes H_V \otimes H_W$ can be considered as a new associativity morphism on the  C$^*$-category $\Rep G$ with bifunctor $\circt$. This gives us a new C$^*$-tensor category $(\Rep G, \Phi)$, which has the same data as $\Rep G$ except for the new associativity morphisms defined by the action of~$\Phi$. It is not clear whether the new category is automatically rigid, but this is at least the case if $\Phi$ acts as a scalar on $H_U \otimes H_V \otimes H_W$ for irreducible $U$, $V$ and $W$. If $\E$ is an invariant unitary $2$-cochain, the categories $(\Rep G, \Phi)$ and $(\Rep G, \Phi_\E)$ are naturally unitarily monoidally equivalent, by means of the unitary tensor functor $(\id_\un, \Id_{\Rep G}, \E^{-1})\colon (\Rep G, \Phi) \to (\Rep G, \Phi_\E)$. This way $H^3_G(\hat{G}; \T)$ gives a parametrization of the categories of the form $(\Rep G, \Phi)$ considered up to unitary monoidal equivalences that preserve the isomorphism classes of objects.

\subsection{Twisted \texorpdfstring{$q$}{q}-deformations of compact Lie groups}

Finally, let us recall from~\cite{MR3340190} how to construct new quantum groups whose representation categories are nontrivial twists of $\Rep G$. Let~$G$ be a simply connected semisimple compact Lie group, and $T$ be its maximal torus.  We denote the weight lattice and the root lattice by~$P$ and~$Q$ respectively, so that $P/Q$ is naturally isomorphic to the Pontryagin dual of the center of $G$.

Suppose that $c$ is a $\T$-valued $2$-cochain on the dual group $\hat{T} = P$, such that its coboundary $\partial c$ happens to be invariant under $Q$ in each variable. Then $\partial c$ can be considered a $3$-cocycle $\Phi^c$ on $P/Q = \widehat{Z(G)}$.  In turn, $\Phi^c$ can be considered as an invariant $3$-cocycle on $\hat{G}$, and hence an associator on $\Rep G$.  Concretely, for irreducible representations $U$, $V$ and $W$, the associator $\Phi^c(U, V, W)$
acts as the scalar $\Phi^c(\omega_U,\omega_V,\omega_W)$, where $\omega_U\in\widehat{Z(G)}$ is the central character of $U$.

Since $\Phi^c$ is the coboundary of $c$ over $\hat{T}$, we have a unitary fiber functor $F_c\colon(\Rep G,\Phi^c)\to\Hilb_f$, which is identical to the canonical fiber functor on $\Rep G$, except that the tensor structure is given~by
$$
H_U \otimes H_V \to H_{U \circt V},\ \ \xi \otimes \eta \mapsto c^*(\xi \otimes \eta).
$$
Analogous to the case of twisting by $2$-cocycles, this functor defines a new compact quantum group~$G^c$ such that $\Rep G^c$ is monoidally equivalent to $(\Rep G, \Phi^c)$. Explicitly, $\C[G^c]=\C[G]$ as coalgebras, while the new $*$-algebra structure is defined by duality from $(\U(G),c\Dhat(\cdot)c^*)$.

Notice that, since $c$ is not assumed to be invariant, the category $(\Rep G, \Phi^c)$ need not be monoidally equivalent to $\Rep G$. In fact, for simple and simply connected $G$, we later show that these categories are monoidally equivalent if and only if the class of $\partial c$ is trivial in $H^3(P/Q; \T)$.

More generally, for any $0 < q < \infty$, the finite dimensional admissible unitary representations of the quantized universal enveloping algebra $U_q(\mathfrak{g})$ define the $q$-deformed compact quantum group $G_q$ ($\mathfrak{g}$ is the complexified Lie algebra of $G$). This quantum group contains $T$ as a closed subgroup, and the image of $Z(G)$ is central in $\U(G_q)$. Thus, by the same construction we obtain a new quantum group $G_q^c$ such that $\U(G_q^c) = (\U(G_q), c \hat{\Delta}_q(\cdot) c^*)$.  Because $c$ is defined on $\hat{T}$, the coproduct of any element $a \in \U(T)$ computed in $\U(G_q^c)$ is the same as $\hat{\Delta}_q(a) = \hat{\Delta}(a)$.  In particular, $T$ is still a closed subgroup of $G_q^c$.

\bigskip

\section{Factorization of fiber functors through Kac quantum subgroups}
\label{sec:fact-fib-funct}

Let $G$ be a compact quantum group. We are interested in unitary tensor functors $F\colon\Rep G\to\A$ defining the classical dimension function on $\Rep G$, that is, such that $\dim U=d^\A(F(U))$ for all $U$. By~\cite{poisson-bdry-monoidal-cat}*{Theorem~4.1}, if $G$ is coamenable, then there exists a universal functor $\Pi\colon\Rep G\to\PP$ with this property. More precisely, to apply the results of~\cite{poisson-bdry-monoidal-cat} we in addition have to assume that $\Irr G$ is at most countable. This will be the case in our applications in the later sections. In the general case, we could consider quotient compact quantum groups of $G$ corresponding to countably generated full rigid monoidal subcategories of $\Rep G$, and then pass to the limit. We leave the details of this reduction to the countable case to the interested reader.

The universal functor $\Pi\colon\Rep G\to\PP$ was constructed as the Poisson boundary of $\Rep G$ with respect to an ergodic measure, but for our current purposes it is more instructive to understand that up to an isomorphism it can be described as follows, see~\cite{poisson-bdry-monoidal-cat}*{Section~4}. Assume that $\Rep G$ is a C$^*$-tensor subcategory of a strict C$^*$-tensor category $\A$ such that $\dim U=d^\A(U)$ for all objects $U$ in $\Rep G$. Choose a standard solution $(R_U,\bar R_U)$ of the conjugate equations for~$U$ in $\Rep G$. There exists a unique positive automorphism $a_U$ of the object~$U$ in~$\A$ such that
$$
(\iota\otimes a_U^{1/2})R_U\ \ \text{and}\ \ (a_U^{-1/2}\otimes\iota)\bar R_U
$$
form a standard solution of the conjugate equation for $U$ in $\A$. Then as $\PP$ we can take the C$^*$-tensor category obtained by the subobject completion (idempotent completion) of the monoidal subcategory of $\A$ generated by $\Rep G$ and the morphisms $a_U$ for all $U$, and as $\Pi\colon\Rep G\to\PP$ we can take the embedding functor.

\begin{theorem}
Let $G$ be a coamenable compact quantum group and $K$ be its maximal Kac quantum subgroup.  Then the forgetful functor $\Rep G\to\Rep K$ is a universal unitary tensor functor defining the classical dimension function on $\Rep G$.
\end{theorem}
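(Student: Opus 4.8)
The plan is to show that the forgetful functor $F_K\colon\Rep G\to\Rep K$ is naturally unitarily monoidally isomorphic to the universal functor $\Pi\colon\Rep G\to\PP$ furnished by \cite{poisson-bdry-monoidal-cat}*{Theorem~4.1}. First observe that $F_K$ does define the classical dimension function: since $K$ is of Kac type, its intrinsic dimension agrees with the classical one, so $d^{\Rep K}(F_K(U))=\dim U$ for every $U$. By the universal property of $\Pi$ it then suffices to identify the pair $(\PP,\Pi)$ with $(\Rep K,F_K)$. My strategy is to reconstruct a compact quantum group from $\PP$, show it is a Kac quantum subgroup of $G$, and finally that it is the maximal one.

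The canonical fiber functor $F_G\colon\Rep G\to\Hilb_f$ also defines the classical dimension function, so by universality it factors as $F_G\cong E\Pi$ for a unitary tensor functor $E\colon\PP\to\Hilb_f$, unique up to isomorphism. I would first check that $E$ is itself dimension-preserving: by the explicit description of $\PP$ recalled above the adjoined morphisms $a_U$ create no new objects, so every object of $\PP$ is a subobject of some $\Pi(U)$, and on such objects $E$ agrees with $F_G$; hence $d^{\Hilb}(E(X))=d^{\PP}(X)$ throughout. Thus $(\PP,E)$ satisfies the hypotheses of Woronowicz's Tannaka--Krein theorem and reconstructs a compact quantum group $K'$ with $\PP\cong\Rep K'$ and $E$ its canonical fiber functor. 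Since the intrinsic dimension of $\Rep K'$ equals the classical one, $K'$ is of Kac type. The relation $E\Pi\cong F_G$ says that $\Pi$ is compatible with the canonical fiber functors, so by functoriality of Tannaka--Krein duality it is the forgetful functor attached to a Hopf $*$-algebra homomorphism $\C[G]\to\C[K']$; this map is surjective because $\PP$ is generated by $\Pi(\Rep G)$. Hence $K'$ is a Kac quantum subgroup of $G$.

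It remains to prove $K'=K$. Given any Kac quantum subgroup $H$ of $G$, with quotient $\C[G]\to\C[H]$ and associated forgetful functor $R_H\colon\Rep G\to\Rep H$, the functor $R_H$ defines the classical dimension function and so factors as $R_H\cong\hat F_H\Pi$ for a unitary tensor functor $\hat F_H\colon\PP\to\Rep H$. Writing $F_H$ for the canonical fiber functor of $H$, we have $F_H R_H\cong F_G$, hence $F_H\hat F_H\Pi\cong F_G\cong E\Pi$; by the uniqueness clause of the universal property this forces $F_H\hat F_H\cong E$. Thus $\hat F_H$ is compatible with the fiber functors and induces a surjective Hopf $*$-algebra map $\C[K']\to\C[H]$ commuting with the quotient maps from $\C[G]$, i.e.\ $H$ is a quantum subgroup of $K'$. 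Therefore $K'$ contains every Kac quantum subgroup of $G$ and is itself one, so it enjoys the defining maximality property of $K$, giving $K'=K$. Transporting back, $\Pi$ is identified with $F_K\colon\Rep G\to\Rep K$, which is thereby universal.

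The main work sits in the second paragraph: setting up the Tannaka--Krein reconstruction on $\PP$, verifying carefully that $E$ is dimension-preserving and that the induced $\C[G]\to\C[K']$ is a surjective Hopf $*$-homomorphism, and then repeatedly invoking the uniqueness clause of the universal property to propagate compatibility with fiber functors. As a concrete check supporting $\PP\cong\Rep K$, one can verify directly that the positive automorphisms $a_U$ of the explicit model are the Woronowicz operators $\rho_U$: indeed $(\iota\otimes\rho_U^{1/2})R_U=r_U$ and $(\rho_U^{-1/2}\otimes\iota)\bar R_U=\bar r_U$ are the naive Kac-type standard solutions, and $\rho_U$ commutes with $U$ modulo the ideal defining $\C[K]$ because $S^2=\id$ there, so $\rho_U$ becomes a $K$-intertwiner — exactly the extra morphism adjoined in passing from $\Rep G$ to $\Rep K$.
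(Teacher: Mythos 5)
Your proposal is, in substance, the paper's own second (``formal'') proof: identify $\Pi\colon\Rep G\to\PP$ with the forgetful functor to $\Rep K'$ for a closed quantum subgroup $K'<G$ via Tannaka--Krein, observe that $K'$ is of Kac type because quantum and classical dimensions agree on $\PP$, and then play the universal property off against maximality to force $K'=K$. Your elaborations are correct and worthwhile: the dimension-preservation of $E$ (every object of $\PP$ is a subobject of some $\Pi(U)$, together with monotonicity of intrinsic dimension under tensor functors and its additivity over direct summands), and the closing verification that $a_U=\rho_U$ because $S^2=\id$ on $\C[K]$ makes $\rho_U$ a $K$-intertwiner --- the latter being exactly the computation that drives the paper's first proof.

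The one step that does not follow as written is the assertion that the surjection $\C[K']\to\C[H]$ obtained from $F_H\hat F_H\cong E$ ``commutes with the quotient maps from $\C[G]$''. The uniqueness clause of the universal property gives you \emph{some} natural unitary monoidal isomorphism $F_H\hat F_H\cong E$, but not one compatible with the previously fixed isomorphisms $R_H\cong\hat F_H\Pi$ and $E\Pi\cong F_G$; tracing through Tannaka--Krein, the composite $\C[G]\to\C[K']\to\C[H]$ then agrees with the given quotient $\C[G]\to\C[H]$ only up to conjugation by a unitary group-like element of $\U(G)$ (a natural monoidal automorphism of $F_G$), and you cannot absorb this discrepancy into your choices without knowing that such group-likes come from $\U(K')$ --- which is circular, since that is part of what is being proved. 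As both the defining maximality property of $K$ and the uniqueness of the maximal Kac quantum subgroup are statements about commuting triangles, this needs repair. The cheap repair is precisely what the paper does: take $H=K$; since $K'$ is Kac, maximality gives $K'\subset K$ and hence the \emph{concrete} inclusion $\End_K(H_U)\subset\End_{K'}(H_U)$ inside $B(H_U)$, while your factorization $R_K\cong\hat F_K\Pi$ gives an abstract embedding $\End_{K'}(H_U)=\End_{\PP}(\Pi(U))\hookrightarrow\End_K(H_U)$ via the faithful functor $\hat F_K$; finite-dimensionality forces these algebras to coincide for every $U$, and since the kernels of $\C[G]\to\C[K]$ and $\C[G]\to\C[K']$ are determined by these subalgebras, $K'=K$. (Alternatively, one can upgrade the universal property by showing that natural monoidal transformations descend along $\Pi$: dimension-preserving functors send standard solutions to standard solutions, so any such transformation automatically intertwines the morphisms $a_U$; but this requires an argument, not just the citation.) With either patch your proof is complete and coincides with the paper's.
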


\bp We will give two proofs. For the first proof, we consider $\Rep G$ as a subcategory of $\Hilb_f$ and use the description of the universal functor $\Pi\colon\Rep G\to\PP$ given above. Then the above characterization of the morphisms $a_U$ imply that they are exactly equal to $\rho_U$.

Let $\PP$ be the C$^*$-tensor category obtained by the subobject completion of the monoidal subcategory of $\Hilb_f$ generated by $\Rep G$ and the morphisms~$\rho_U$.  By Woronowicz's Tannaka--Krein duality, $\PP$ defines a closed quantum subgroup $H < G$. Then, on the one hand, since $\rho_U$ is a morphism in the new category $\PP = \Rep H$, the eigenspaces of $\rho_U$ are representations of $H$. On the other hand, the conjugation by~$\rho_U^{it}$ gives the action of the scaling group $\tau_t$ of on the direct summand $B(H_U)_* \subset \C[G]$.  Since the quotient map $\C[G]\to\C[H]$ intertwines the scaling groups, we see that the scaling group of $H$ must be trivial. Thus $H$ is of Kac type.

Next, if $H'$ is a closed quantum subgroup of $G$ of Kac type, then using again that the quotient map $\C[G]\to\C[H']$ intertwines the scaling groups, we see that $\Rep H'$ must contain the morphisms~$\rho_U$, and consequently $\Rep H \subset \Rep H'$.  Thus, $H$ is maximal among such quantum subgroups, so $H=K$.  This completes the first proof.

\smallskip

For the second proof, we argue more formally. Let $\Pi\colon\Rep G\to \PP$ be a universal functor. Since the forgetful functor $\Rep G\to\Hilb_f$ factors through $\Pi\colon\Rep G\to\PP$, by Woronowicz's Tannaka--Krein duality we can identify $\Pi\colon\Rep G\to\PP$ with the forgetful functor $\Rep G\to\Rep H$ for a closed quantum subgroup $H$ of $G$. Since the quantum dimension function on $\Rep H$ coincides with the classical dimension function, the quantum group $H$ is of Kac type. Hence, for any finite dimensional unitary representation $U$ of~$G$, we have
$\End_K(H_U)\subset\End_H(H_U)$. On the other hand, by assumption the forgetful functor $\Rep G\to\Rep K$ factors through $\Rep G\to\Rep H$, so that $\End_H(H_U)$ can be embedded into $\End_K(H_U)$. Hence $\End_K(H_U)=\End_H(H_U)$, and since this is true for all $U$, we conclude that $H=K$.
\ep

We will need only the following particular case of this result.

\begin{corollary}\label{cor:coamen-2-cohom-classification}
There is a bijective correspondence between the (natural unitary monoidal) isomorphism classes of unitary fiber functors $\Rep K\to\Hilb_f$ and the isomorphism classes of unitary fiber functors $F\colon\Rep G\to\Hilb_f$ such that $\dim U=\dim F(U)$. Namely, the correspondence maps a functor $\Rep K\to\Hilb_f$ into its composition with the forgetful functor $\Rep G\to\Rep K$.
\end{corollary}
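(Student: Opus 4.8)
The plan is to read the corollary off from the universality established in the theorem, the only genuine work being to check that composing with the forgetful functor $\Rep G\to\Rep K$ really does send fiber functors of $K$ to \emph{dimension-preserving} fiber functors of $G$. So first I would verify that the assignment $\Phi\mapsto\Phi\circ(\text{forgetful})$ is well defined on isomorphism classes and lands in the correct set. Let $\Phi\colon\Rep K\to\Hilb_f$ be a unitary fiber functor. Since $K$ is of Kac type, the intrinsic dimension on $\Rep K$ coincides with the classical dimension; and since every unitary tensor functor between rigid C$^*$-tensor categories preserves the intrinsic dimension while $d^{\Hilb_f}$ is just the vector-space dimension, for every object $V$ of $\Rep K$ we get $\dim\Phi(V)=d^{\Hilb_f}(\Phi(V))=d^{\Rep K}(V)=\dim V$. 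Applying this to the image of a representation $U$ of $G$ under the forgetful functor, which leaves the underlying Hilbert space (and hence the classical dimension) unchanged, we conclude $\dim\big(\Phi\circ(\text{forgetful})\big)(U)=\dim U$. Precomposing a natural unitary monoidal isomorphism $\Phi\to\Phi'$ with the forgetful functor yields one between the composites, so the assignment descends to isomorphism classes.

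Next I would simply invoke the theorem, which identifies the forgetful functor $\Rep G\to\Rep K$ with a universal unitary tensor functor defining the classical dimension function on $\Rep G$. Spelling out the universal property of~\cite{poisson-bdry-monoidal-cat}*{Theorem~4.1} with target $\A=\Hilb_f$: every unitary tensor functor $F\colon\Rep G\to\Hilb_f$ with $\dim F(U)=\dim U$ factors, up to natural unitary monoidal isomorphism, as $F\cong\tilde F\circ(\text{forgetful})$ for a unitary tensor functor $\tilde F\colon\Rep K\to\Hilb_f$, and $\tilde F$ is determined by $F$ uniquely up to natural unitary monoidal isomorphism. The existence part says exactly that the map of the previous paragraph is surjective onto the isomorphism classes of dimension-preserving fiber functors on $\Rep G$, while the uniqueness part says that $\tilde F$ is well defined as a function of the isomorphism class of $F$, giving injectivity. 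Hence the stated correspondence is a bijection, its inverse being $F\mapsto\tilde F$.

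I do not anticipate a serious obstacle, since the substantive work has already been done in proving the theorem and everything here is a formal consequence of its universal property. The one point to watch is the well-definedness step: it is crucial that we use preservation of the intrinsic dimension together with the Kac property of $K$ to guarantee that \emph{every} fiber functor on $\Rep K$, with no a priori dimension hypothesis imposed, automatically produces a dimension-preserving functor on $\Rep G$; this is what makes the two sides of the bijection match up without any further constraint on the $\Rep K$ side.
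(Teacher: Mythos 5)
Your overall architecture is the right one, and it is essentially how the paper views this corollary: check well-definedness of $\Phi\mapsto\Phi\circ\mathrm{res}$, then read off surjectivity and injectivity from the existence and uniqueness halves of the universal property established in the theorem. However, the step you yourself single out as crucial --- well-definedness --- is justified by a false statement. It is \emph{not} true that every unitary tensor functor between rigid C$^*$-tensor categories preserves the intrinsic dimension; such functors only satisfy $d^{\DD}(F(X))\le d^{\CC}(X)$, and the inequality can be strict. The canonical fiber functor of $\Rep \SU_q(2)$, $q\ne1$, already violates your claim: the fundamental representation has intrinsic dimension $q+q^{-1}>2$, while its image $\C^2$ has intrinsic dimension $2$ in $\Hilb_f$. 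Indeed, if your claim were true, no non-Kac compact quantum group could admit a fiber functor at all, whereas the whole paper is premised on exactly this discrepancy between quantum and classical dimension. Worse, the conclusion you want cannot be rescued from the Kac property of $K$ alone: the free orthogonal quantum group $O_N^+$ ($N\ge3$) is of Kac type, yet $\Rep O_N^+$ is unitarily monoidally equivalent to $\Rep \SU_q(2)$ with $q+q^{-1}=N$, and composing this equivalence with the canonical fiber functor of $\SU_q(2)$ gives a unitary fiber functor on $\Rep O_N^+$ sending the $N$-dimensional fundamental representation to $\C^2$. So some hypothesis beyond ``$K$ is Kac'' must genuinely enter.

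What saves the statement is coamenability. Since $G$ is coamenable, so is its closed quantum subgroup $K$ (alternatively, amenability of the Poisson boundary $\PP\simeq\Rep K$ is part of the conclusion of \cite{poisson-bdry-monoidal-cat}*{Theorem~4.1}). Hence $\Rep K$ is an amenable category: $\|\Gamma_V\|=\dim V=d^{\Rep K}(V)$ for all $V$ in $\Rep K$. Now, given any unitary fiber functor $\Phi\colon\Rep K\to\Hilb_f$, the function $V\mapsto\dim\Phi(V)$ is a dimension function on the fusion ring of $\Rep K$, and every dimension function dominates the fusion-matrix norms, $\dim\Phi(V)\ge\|\Gamma_V\|$, by the standard Perron--Frobenius argument (minimality of the amenable dimension function). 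Combining this with the correct inequality $\dim\Phi(V)=d^{\Hilb_f}(\Phi(V))\le d^{\Rep K}(V)$ squeezes out $\dim\Phi(V)=\dim V$, which is the well-definedness you need. With this repair, the rest of your argument --- surjectivity from the existence part of universality, injectivity from the uniqueness part --- goes through exactly as you wrote it.
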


In cohomological terms, this can be rephrased as follows.

\begin{corollary}\label{cor:coamen-2-cohom-classification-layman}
The natural map $H^2(\hat{K}; \T) \to H^2(\hat{G}; \T)$ induced by the inclusion $\U(K) \to \U(G)$ is a bijection.
\end{corollary}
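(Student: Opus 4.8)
The plan is to reduce the statement to Corollary~\ref{cor:coamen-2-cohom-classification} by reinterpreting the dual $2$-cohomology, as recalled in the preliminaries, as a parametrization of fiber functors. By construction $H^2(\hat G;\T)$ is in bijection with the natural unitary monoidal isomorphism classes of unitary tensor functors $F'\colon\Rep G\to\Hilb_f$ satisfying $\dim F'(U)=\dim H_U$, the class of a unitary cocycle $\E$ corresponding to the functor $F_\E$. For $K$, which is of Kac type, every unitary tensor functor $\Rep K\to\Hilb_f$ preserves the intrinsic dimension, which for $K$ equals the classical dimension; hence all such functors automatically satisfy $\dim F'(U)=\dim H_U$, and $H^2(\hat K;\T)$ parametrizes the isomorphism classes of \emph{all} unitary fiber functors $\Rep K\to\Hilb_f$. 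Thus the two cohomology groups are exactly the two sets occurring in Corollary~\ref{cor:coamen-2-cohom-classification}.

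First I would check that the map is well defined. The canonical quotient $q\colon\C[G]\to\C[K]$ is a surjective homomorphism of Hopf $*$-algebras, so the dual map $q^*\colon\U(K)\to\U(G)$ is an injective $*$-homomorphism which, being dual to an algebra map, intertwines the coproducts: $\Dhat_G\,q^*=(q^*\otimes q^*)\,\Dhat_K$. Consequently $q^*\otimes q^*$ sends a unitary $2$-cocycle on $\hat K$ to a unitary $2$-cocycle on $\hat G$, and it sends the relation $\E\sim\E_T$ (for a unitary $T\in\U(K)$) to the corresponding relation on $\hat G$ with unitary $q^*(T)\in\U(G)$. This produces the induced map $H^2(\hat K;\T)\to H^2(\hat G;\T)$.

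The heart of the matter is to identify this induced map with the map of Corollary~\ref{cor:coamen-2-cohom-classification}. For a unitary $2$-cocycle $\E\in\U(K^2)$, the forgetful functor $\Rep G\to\Rep K$ sends an object $U$ to the same Hilbert space $H_U$ with the $K$-action obtained through $q$, so that the action of $q^*(\E)\in\U(G^2)$ on $H_U\otimes H_V$ coincides with the action of $\E$ on the underlying spaces of the restricted representations. Therefore the composite of $F_\E\colon\Rep K\to\Hilb_f$ with the forgetful functor is literally the functor $F_{q^*(\E)}\colon\Rep G\to\Hilb_f$, including the tensor structure $q^*(\E)^{-1}$. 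Under the parametrizations of the first paragraph this says precisely that the induced cohomology map agrees with the ``compose with the forgetful functor'' map of Corollary~\ref{cor:coamen-2-cohom-classification}.

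Finally, bijectivity is inherited from that corollary. Injectivity: two cocycles on $\hat K$ whose images are cohomologous on $\hat G$ give isomorphic functors on $\Rep G$, hence, by the injective part of Corollary~\ref{cor:coamen-2-cohom-classification}, isomorphic functors on $\Rep K$, and so cohomologous cocycles. Surjectivity: every dimension-preserving fiber functor on $\Rep G$ factors through $\Rep K$, and the factoring functor is represented by some cocycle on $\hat K$ whose image realizes the given class. The only genuinely delicate point I anticipate is the bookkeeping of the third paragraph—verifying that the tensor structure of $F_{q^*(\E)}$ matches that of the composite functor with no spurious natural isomorphism to be inserted—but this is a direct unwinding of the definitions of $F_\E$ and of the forgetful functor, together with the coproduct compatibility of $q^*$.
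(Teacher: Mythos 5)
Your overall route is the same as the paper's: the paper offers no separate argument for this corollary, treating it as the translation of Corollary~\ref{cor:coamen-2-cohom-classification} through the dictionary between unitary dual $2$-cocycles and dimension-preserving fiber functors, and your second, third and fourth paragraphs carry out exactly the bookkeeping that translation requires (well-definedness of $[\E]\mapsto[q^*(\E)]$, the on-the-nose identity $F_\E\circ(\text{forgetful})=F_{q^*(\E)}$, and transport of bijectivity). However, your first paragraph contains a genuinely false claim: that every unitary tensor functor $\Rep K\to\Hilb_f$ preserves the intrinsic dimension. A unitary tensor functor between rigid C$^*$-tensor categories maps solutions of the conjugate equations to solutions of the same norm, which only yields $d(F(V))\le d(V)$; the inequality can be strict, since the target may admit better solutions. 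The canonical fiber functor of $\SU_q(2)$, sending an object of intrinsic dimension $q+q^{-1}$ to $\C^2$, already shows this, and the Kac property does not rescue the claim: the free orthogonal quantum group $O_n^+$ is of Kac type, yet $\Rep O_n^+$ is unitarily monoidally equivalent to $\Rep\SU_q(2)$ with $q+q^{-1}=n$, so it carries a unitary fiber functor sending the $n$-dimensional fundamental representation to $\C^2$. Thus ``$K$ Kac'' alone does not give $\dim\Lambda(V)=\dim V$ for all fiber functors $\Lambda$ on $\Rep K$; the statement you need (so that $H^2(\hat K;\T)$ parametrizes \emph{all} unitary fiber functors on $\Rep K$) is true here, but it rests on coamenability of $K$ -- inherited from $G$, as quantum subgroups of coamenable quantum groups are coamenable -- via the standard bounds $\dim\Lambda(V)\ge\|\Gamma_V\|=\dim V$.

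The cheapest repair avoids the claim altogether. It is only used in your surjectivity step, and there it is automatic: given a dimension-preserving unitary fiber functor $F$ on $\Rep G$, the universality theorem of Section~\ref{sec:fact-fib-funct} gives $F\cong\Lambda\circ(\text{forgetful})$ for some unitary tensor functor $\Lambda\colon\Rep K\to\Hilb_f$, whence $\dim\Lambda(U|_K)=\dim F(U)=\dim U$; since $\C[G]\to\C[K]$ is surjective, every object of $\Rep K$ is a subobject of some restriction $U|_K$, so $\Lambda$ is dimension-preserving and hence of the form $F_\E$ for a unitary $2$-cocycle $\E$ on $\hat K$. (Equivalently, this dimension-preservation is already encoded in the well-definedness of the correspondence in Corollary~\ref{cor:coamen-2-cohom-classification}, so you may extract it from that statement rather than from a general -- and false -- principle about tensor functors.) With this substitution, the rest of your argument stands as written.
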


We finish this section with the following simple observation.

\begin{proposition} \label{pmaxKac}
Let $G$ be a compact quantum group, $K$ be its maximal Kac quantum subgroup and $\E$ be a unitary $2$-cocycle on $\hat K$. Then $K_\E$ is the maximal Kac quantum subgroup of $G_\E$.
\end{proposition}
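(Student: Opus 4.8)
The plan is to prove a slightly more general statement and then deduce maximality by an untwisting argument, exploiting that $\E$, being a cocycle on $\hat K$, lies in $\U(K^2)\subseteq\U(G^2)$, so that $\U(K_\E)=\U(K)$ and $\U(G_\E)=\U(G)$ as $*$-algebras, the coproducts merely being replaced by $\hat\Delta_\E(\cdot)=\E\hat\Delta(\cdot)\E^{-1}$. The general claim I would isolate first is: \emph{if $H$ is any closed Kac quantum subgroup of a compact quantum group $\tilde G$ and $\tilde\E$ is a unitary $2$-cocycle on $\hat H$, then $H_{\tilde\E}$ is a closed Kac quantum subgroup of $\tilde G_{\tilde\E}$.} This does not use maximality of $H$, and it is exactly the fact I will reuse twice.

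To prove this general claim, I would first check that $H_{\tilde\E}<\tilde G_{\tilde\E}$. The inclusion $\U(H)\hookrightarrow\U(\tilde G)$, dual to the Hopf $*$-algebra surjection $\C[\tilde G]\to\C[H]$, is a $*$-algebra map intertwining the untwisted coproducts, and since it sends $\tilde\E$ to $\tilde\E$ it also intertwines the twisted coproducts; this is precisely the statement that $H_{\tilde\E}$ is a closed quantum subgroup of $\tilde G_{\tilde\E}$. That $H_{\tilde\E}$ is again of Kac type is seen through the dimension functions: one has $\Rep H_{\tilde\E}=\Rep H$ as C$^*$-tensor categories, so their intrinsic dimension functions agree, whereas the canonical fiber functor of $H_{\tilde\E}$ still sends $U\mapsto H_U$ and therefore induces the same classical dimension as that of $H$. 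Since $\dim U=d(U)$ on $\Rep H$ by the Kac property, the same equality persists on $\Rep H_{\tilde\E}$, so $H_{\tilde\E}$ is of Kac type.

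For maximality, let $L$ be the maximal Kac quantum subgroup of $G_\E$, which exists for every compact quantum group. Applying the general claim to $(G,K,\E)$ shows that $K_\E$ is a Kac subgroup of $G_\E$, whence $\U(K)=\U(K_\E)\subseteq\U(L)$; in particular $\E\in\U(K^2)\subseteq\U(L^2)$, so $\E^{-1}$ is a unitary $2$-cocycle on $\hat L$. Applying the general claim again, now to $(G_\E,L,\E^{-1})$, shows that $L_{\E^{-1}}$ is a Kac subgroup of $(G_\E)_{\E^{-1}}$. Because twisting $\hat\Delta_\E$ by $\E^{-1}$ restores $\hat\Delta$, we have $(G_\E)_{\E^{-1}}=G$, so $L_{\E^{-1}}$ is a Kac subgroup of $G$, and the maximality of $K$ forces $\U(L)=\U(L_{\E^{-1}})\subseteq\U(K)$. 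Together with the reverse inclusion this gives $\U(L)=\U(K)$, hence $L=K_\E$.

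The point where one must be careful is this maximality step: it is tempting to invoke the categorical universal property of the maximal Kac subgroup established at the start of this section, but that requires $G$ to be coamenable, which is not assumed here. The untwisting symmetry bypasses this entirely, the essential content being that twisting by a cocycle supported on a Kac subgroup is an involution that preserves both the Kac property and inclusions of quantum subgroups. The remaining verifications are routine and I would only record them briefly: that $(G_\E)_{\E^{-1}}=G$, and that the general claim applies verbatim with $\E^{-1}$ in place of $\E$ once one knows $\E\in\U(L^2)$.
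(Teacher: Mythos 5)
Your proof is correct and follows essentially the same route as the paper's: first establish that $K_\E$ is of Kac type by comparing the intrinsic (categorical) dimension on $\Rep K_\E = \Rep K$ with the classical dimension, which is unchanged by construction, and then deduce maximality from the untwisting symmetry, using that $\E^* = \E^{-1}$ is a unitary $2$-cocycle for the twisted coproduct and that $(G_\E)_{\E^*} = G$. The paper compresses your two applications of the ``general claim'' into the phrase ``by swapping the roles of $G$ and $G_\E$,'' but the underlying argument is the same.
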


\bp
The quantum dimensions of the unitary representations of $K_\E$ are the same as those of $K$, since $\Rep K_\E= \Rep K$ and quantum dimension is an intrinsic notion to the representation category. The classical dimensions also remain the same by construction. Thus the quantum and classical dimensions on $\Rep K_\E$ coincide, hence $K_\E$ is of Kac type.

It follows that $K_\E$ is contained in the maximal Kac quantum subgroup of $G_\E$. Since $\E^*$ is a unitary $2$-cocycle on $\hat G_\E$ and $(G_\E)_{\E^*}=G$, by swapping the roles of $G$ and $G_\E$ we conclude that $K_\E$ must be maximal.
\ep

\bigskip

\section{Compact quantum groups of Lie type}
\label{sec:case-q-deformations}

In this section $G$ denotes a simply connected semisimple compact Lie group. Recall that if $q>0$ and $c$ is a $\T$-valued $2$-cochain on the dual $\hat{T} = P$ of the maximal torus $T\subset G$, such that its coboundary $\partial c$ defines a $3$-cocycle $\Phi^c$ on $\widehat{Z(G)} = P/Q$, then we can define a compact quantum group $G^c_q$.

\subsection{Dimension-preserving fiber functors}
\label{sec:max-torus-twist-q}

We want to apply the results of Section~\ref{sec:fact-fib-funct} to $G^c_q$. For this we have to find the maximal Kac quantum subgroups of these quantum groups.

As was shown by Tomatsu~\cite{MR2335776}*{Lemma~4.10}, for $q\ne1$ the maximal Kac quantum subgroup of $G_q$ is $T$.  We have the following generalization of this result.

\begin{theorem}
\label{thm:can-Kac-quot-G-q-tau}
For any $q>0$, $q\ne1$, and any $\T$-valued $2$-cochain $c$ on $P$ such that $\partial c$ descends to~$P/Q$, the maximal Kac quantum subgroup of $G_q^c$ is the maximal torus $T$.
\end{theorem}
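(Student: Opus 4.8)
The plan is to deduce the statement from Tomatsu's theorem for $G_q$ by viewing $G_q^c$ as a twist of $G_q$. The coproduct of $G_q^c$ is $\hat\Delta^c=c\hat\Delta(\cdot)c^*$ with $c\in\U(T^2)$, and the failure of $c$ to be a cocycle is precisely the $3$-cocycle $\Phi^c=\partial c$, which acts by scalars on every triple of irreducibles and is therefore central in $\U(G_q^3)$. Since conjugation by a central element is trivial, $\hat\Delta^c$ is genuinely coassociative, so $G_q^c$ is an honest compact quantum group; the same applies to the inverse twist by $c^*$, so that $(G_q^c)_{c^*}=G_q$. I would first record that $G_q^c$ is coamenable, as coamenability is detected by the numbers $\|\Gamma_U\|$, which depend only on the fusion rules and classical dimensions---both identical to those of the coamenable $G_q$.

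Next I would establish the two features of this twist that let it play the role of the cocycle twist in Proposition~\ref{pmaxKac}. Because $c$ lies in the commutative algebra $\U(T^2)$, it commutes with $\hat\Delta(a)$ for every $a\in\U(T)$, so $\hat\Delta^c$ agrees with $\hat\Delta$ on $\U(T)$: the maximal torus is a closed quantum subgroup of $G_q^c$ left untouched by the twist, $T_c=T$, and being an ordinary torus it is of Kac type, so $T$ is contained in the maximal Kac quantum subgroup $K$ of $G_q^c$. Secondly, twisting by the unitary $c$ preserves the fusion rules and changes the associator only by the phase $\Phi^c$; it therefore alters neither the classical dimensions nor, since the standard solutions of the conjugate equations are only rescaled by unimodular scalars, the quantum dimensions. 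Consequently a closed quantum subgroup of $G_q^c$ is of Kac type if and only if its $c^*$-twist inside $G_q$ is.

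The conclusion is then formal. Since $T\subseteq K$, the cochain $c^*\in\U(T^2)\subseteq\U(K^2)$ twists $K$ into a closed quantum subgroup $K_{c^*}$ of $(G_q^c)_{c^*}=G_q$; by the previous paragraph $K_{c^*}$ is again of Kac type, and it contains $T_{c^*}=T$. Tomatsu's theorem identifies the maximal Kac quantum subgroup of $G_q$ with $T$, so $K_{c^*}=T$; twisting back by $c$ and using $T_c=T$ gives $K=T$.

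The step I expect to demand the most care is the invariance of Kac type under twisting by the \emph{cochain} $c$, that is, that quantum dimension survives the change of associator. The clean justification is that quantum dimension is a function of the fusion rules, which the twist leaves fixed; equivalently, the standard solutions of the conjugate equations pick up only unimodular scalars from $\Phi^c$ and so keep their norms. As an independent check one may verify directly that the Woronowicz character is unchanged: $\rho\in\U(T)$ stays grouplike for the twisted coproduct, since $\hat\Delta^c(\rho)=c(\rho\otimes\rho)c^*=\rho\otimes\rho$ as $\rho$ and $c$ both lie in the commutative algebra $\U(T^2)$, and it retains its positivity and trace normalisation; hence $G_q^c$ and $G_q$ share both the Woronowicz character and the weight-space eigenspaces that determine the maximal Kac quantum subgroup.
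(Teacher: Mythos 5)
Your argument is correct in substance, but it is a genuinely different proof from the one in the paper. You reduce the statement to Tomatsu's theorem for $G_q$ by extending Proposition~\ref{pmaxKac} from genuine $2$-cocycles to the pseudo-$2$-cocycle $c$: since $T\subset K$, the cochain $c^*$ defines an element of $\U(K^2)$, its coboundary computed with $\hat\Delta_K$ equals $(\partial c)^{-1}\in\U(Z(G)^3)$ (as $\hat\Delta_K$ restricts to $\hat\Delta_T$ on $\U(T)$), and this element is central in $\U(K^3)$ because $\U(Z(G))$ is central in $\U(G_q)\supset\U(K)$; hence the twisted coproduct on $\U(K)$ is coassociative, the twisted category is rigid (the new associator acts by unimodular scalars on triples of irreducibles of $K$, by Schur's lemma), dimensions are preserved, and $K_{c^*}$ is a Kac-type closed quantum subgroup of $(G_q^c)_{c^*}=G_q$ containing $T$, so Tomatsu's theorem forces $K_{c^*}=T$ and hence $K=T$. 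The paper instead reduces, via Proposition~\ref{pmaxKac} and invariance under cochains on $\widehat{Z(G)}$, to the special representatives $c_\tau$ of~\cite{MR3340190}, and then argues operator-algebraically: the crossed-product presentation of $C(G^\tau_q)$, Soibelman's representation theory and the primitive spectrum~\eqref{eq:prim-spec-pres}, and the fact that the only irreducible representations of $C(G_q^\tau)$ fixed by the scaling group are evaluations at points of $T$ (this is where Lemma~\ref{lem:wor-char-of-g-q-tau} is used). Your route is shorter, avoids all the C$^*$-algebraic structure theory, and is close in spirit to the ``more categorical'' proof the authors say they found later (see the remark following the theorem); its cost is that the well-definedness of the twist of the \emph{unknown} subgroup $K$ --- coassociativity, rigidity of the twisted category, and the fact that $K_{c^*}$ is a closed quantum subgroup of $G_q$ --- is asserted rather than proved in your write-up, and these verifications (routine, but not free, along the lines sketched above) are exactly what a complete version of your proof must contain.

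Two slips should be corrected. First, ``quantum dimension is a function of the fusion rules'' is false: $\Rep G_q$ and $\Rep G$ have the same fusion rules but different quantum dimensions, and if the claim were true every $G_q$ would be of Kac type. The correct justification is the one you give as the ``equivalent'' formulation (it is not equivalent): the associator acts by scalars of modulus one on $\bar U\otimes U\otimes\bar U$ for irreducible $U$, so solutions of the conjugate equations are only rescaled by phases and the quantum dimension is unchanged. Second, the closing ``independent check'' is heuristic and not a proof: a positive grouplike element with the right trace normalization need not be the Woronowicz character (the defining property involves $S^2$ of the twisted quantum group, which is what Lemma~\ref{lem:wor-char-of-g-q-tau} actually verifies), and sharing the Woronowicz character does not by itself identify the maximal Kac subgroups, since the fiber functors of $G_q$ and $G_q^c$ carry different tensor structures.
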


Before going to the proof we need to establish a simple lemma. It is well-known that the Woronowicz character $\rho=f_1$ of $G_q$ lies in $\U(T)\subset\U(G_q)$, see for example~\cite{neshveyev-tuset-book}*{Proposition~2.4.10}. Namely, it equals $q^{-2\rho^*}$, where $\rho^* \in \mathfrak{h}$ (the complexified Lie algebra of $T$) is the unique vector satisfying $\alpha(\rho^*) = d_\alpha=(\alpha,\alpha)/2$ for each simple root $\alpha$. In other words, under the isomorphism $\mathfrak{h}\cong\mathfrak{h}^*$ the element~$\rho^*$ corresponds to half the sum of positive roots. Recall also that by construction we have an identification $\U(G^c_q)=\U(G_q)$ as $*$-algebras.

\begin{lemma}\label{lem:wor-char-of-g-q-tau}
Under the identification of $\U(G_q^c)$ with $\U(G_q)$, the Woronowicz character~$f_1$ of~$G_q^c$ is again given by $q^{-2\rho^*}$.
\end{lemma}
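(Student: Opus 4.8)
The plan is to verify that $q^{-2\rho^*}$, regarded as an element of $\U(G_q^c)=\U(G_q)$, satisfies the three defining properties of the Woronowicz character of $G_q^c$. Write $\rho=q^{-2\rho^*}\in\U(T)\subset\U(G_q^c)$ and let $f_1^c$ denote the a priori unknown Woronowicz character of $G_q^c$, with associated operators $\rho^c_U=(\iota\otimes f_1^c)(U)$. Since $\C[G_q^c]=\C[G_q]$ as coalgebras, each irreducible corepresentation $U$ is the same matrix of coefficients for both quantum groups, and since $\rho\in\U(T)$ the operator $(\iota\otimes\rho)(U)$ is literally the same operator $\rho_U$ as for $G_q$. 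In particular it is positive and satisfies $\Tr(\rho_U)=\Tr(\rho_U^{-1})$, so properties (i) and (ii) hold for $\rho$. Everything therefore reduces to the third property, namely that conjugation by $\rho_U$ implements $S_c^2$, where $S_c$ is the antipode of $\C[G_q^c]$.

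The key step is to compare the antipode $\hat{S}_c$ of $\U(G_q^c)$ with the antipode $\hat{S}$ of $\U(G_q)$. Since the coproducts are related by the cochain twist $\Dhat_c(\cdot)=c\,\Dhat(\cdot)\,c^*$ with $c\in\U(T)\otimes\U(T)$, the standard Drinfeld-twist computation shows that $\hat{S}_c=\Ad(u)\circ\hat{S}$, where $u$ is obtained by multiplying the legs of $(\iota\otimes\hat{S})(c)$; because $c$ is supported on the torus and $\hat{S}$ preserves $\U(T)$, the element $u$ lies in $\U(T)$ and is unitary. Consequently $\hat{S}_c^2=\Ad(w)\circ\hat{S}^2$ with $w=u\,\hat{S}(u^{-1})\in\U(T)$ unitary. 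Dualizing the identity $(\iota\otimes S^2)(U)=(\rho_U\otimes1)U(\rho_U^{-1}\otimes1)$ for $G_q$ gives $\hat{S}^2=\Ad(\rho)$, and hence
$$
\hat{S}_c^2=\Ad(w\rho),\qquad w\in\U(T)\ \text{unitary},\ \rho=q^{-2\rho^*}.
$$

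I would then exploit the fact that $f_1^c$ genuinely exists, $G_q^c$ being an honest compact quantum group, to pin down the phase $w$ for free. Dualizing the third property for $G_q^c$ shows that $\Ad(\rho^c_U)$ and $\Ad(w_U\rho_U)$ agree on $\pi_U(\U(G_q^c))=B(H_U)$ for every irreducible $U$, so by irreducibility $\rho^c_U=z_U\,w_U\,\rho_U$ for a scalar $z_U$. In a weight basis $w_U$ and $\rho_U$ are diagonal with $\rho_U>0$, so positivity of $\rho^c_U$ forces $z_U\,w(\lambda)>0$ for every weight $\lambda$ of $U$; as the weights of an irreducible all lie in a single $Q$-coset, this means $w$ is constant on that coset, hence $w$ descends to $P/Q$ and $w_U$ is a scalar. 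Thus $\rho^c_U$ is a positive scalar multiple of $\rho_U$, and the normalization $\Tr(\rho^c_U)=\Tr((\rho^c_U)^{-1})$ together with $\Tr(\rho_U)=\Tr(\rho_U^{-1})$ forces that scalar to be $1$. Therefore $\rho^c_U=\rho_U$ for all $U$, i.e.\ $f_1^c=q^{-2\rho^*}$.

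I expect the main obstacle to be the rigorous form of the antipode-twist identity $\hat{S}_c=\Ad(u)\circ\hat{S}$ with $u\in\U(T)$: although $\Dhat_c$ is coassociative because the associator $\Phi^c=\partial c$ is invariant, hence commutes with the image of $\Dhat_c^{(2)}$, the twist $c$ is only a cochain, so one must check that the Drinfeld formula still yields the genuine antipode rather than a mere quasi-antipode, and that the defect $w$ lands in the torus. A cleaner, more categorical alternative is to work inside $(\Rep G_q,\Phi^c)$ with the fiber functor $F_c$: using $\Hom_{G_q^c}(\un,X)=c\cdot\Hom_{G_q}(\un,X)$, one builds standard solutions $R^c_U=c\,R_U$ and $\bar R^c_U=c\,\bar R_U$ from those of $G_q$ and reads off $\rho^c_U$; since $c$, $\Phi^c$ and $\rho_U$ are all diagonal in the weight basis and the unitaries $c$ commute with $\rho_U$, the phases cancel in the conjugate equations and leave the balancing operator $\rho_U$ unchanged. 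Either route yields $f_1^c=q^{-2\rho^*}$, the bookkeeping of the $c$-factors being the only delicate point.
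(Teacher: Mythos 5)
Your main route --- computing how the antipode twists and then pinning down the Woronowicz character of $G_q^c$ by irreducibility, positivity and the trace normalization --- is genuinely different from the paper's proof, and its endgame is correct as written. But the step you yourself flag is the real issue, and as stated it is a gap: the ``standard Drinfeld-twist computation'' giving $S_F=\Ad(u_F)\circ S$ relies on the cocycle identity for the twist, which $c$ does not satisfy, so you cannot quote it. Fortunately the claim is true and can be closed by hand, using exactly the feature you identify (that $c$ lives on the torus). Grade $\C[G_q]$ by pairs of weights, letting $\C[G_q]_{\lambda\mu}$ be the span of matrix coefficients between weight vectors of weights $\lambda$ and $\mu$; then the twisted product is $a\cdot_c b=c(\lambda,\lambda')\,\overline{c(\mu,\mu')}\,ab$ for $a\in\C[G_q]_{\lambda\mu}$, $b\in\C[G_q]_{\lambda'\mu'}$, and since $\eps$ vanishes on $\C[G_q]_{\lambda\mu}$ for $\lambda\ne\mu$, a direct check shows that $S_c(a)=c(-\lambda,\lambda)\,\overline{c(-\mu,\mu)}\,S(a)$ satisfies the left antipode axiom $m_c(S_c\otimes\iota)\Delta=\eps(\cdot)1$. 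Because $\C[G_q^c]$ is an honest Hopf $*$-algebra (the Tannakian fact you are entitled to use), a left antipode coincides with the antipode, and dualizing gives $\hat S_c=\Ad(u)\circ\hat S$ with $u\in\U(T)$ the unitary $u(\lambda)=c(-\lambda,\lambda)$. With this in hand your argument goes through: $\hat S_c^2=\Ad(u\hat S(u^{-1}))\circ\hat S^2=\Ad(w\rho)$, comparison with $\Ad(f_1^c)$ gives $\pi_U(f_1^c)=z_Uw_U\rho_U$, positivity makes $w_U$ scalar, and the normalization kills the remaining constant. Note only that what positivity gives is constancy of $w$ on the set of weights of each irreducible $U$, which is all you need for $w_U$ to be scalar; your stronger claim that $w$ descends to $P/Q$ is neither proved by that argument nor required.

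The paper's actual proof is your ``cleaner categorical alternative'': it normalizes $\Phi^c$, observes that $(\lambda_U R_U,\bar R_U)$ with $\lambda_U=\Phi^c(\omega_{\bar U},\omega_U,\omega_{\bar U})$ solves the conjugate equations in $(\Rep G_q,\Phi^c)$, pushes this through $F_c$ to get $(\lambda_U cR_U,c\bar R_U)$, and reads off that $f_{-1/2}$ of $G_q^c$ equals $\rho^{-1/2}$ times a unitary commuting with $\U(T)$, which positivity forces to be $1$. Your sketch of this route omits the scalar $\lambda_U$: the pair $(cR_U,c\bar R_U)$ alone does not solve the conjugate equations in the twisted category, since those equations involve the associator. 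As $\lambda_U\in\T$, this does not affect the conclusion, which is why your sketch still lands on the right answer. Comparing the two approaches: the paper's is shorter because it never touches the twisted antipode, exploiting instead the uniqueness of standard solutions; yours isolates a reusable structural fact --- that $\hat S_c^2$ differs from $\hat S^2$ by conjugation by a torus unitary --- at the cost of the left-antipode verification above.
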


\begin{proof}
Since $G^c_q$ does not change when we multiply~$c$ by a $2$-cochain living on $\widehat{Z(G)}$, without loss of generality we may assume that the cocycle $\Phi^c$ is normalized.

For an irreducible unitary representation $U$ of $G_q$, put $\lambda_U=\Phi^c(\omega_{\bar{U}},\omega_{ U},\omega_{ \bar{U}})$. Then in the category $(\Rep G_q, \Phi^c)$, the pair $(\lambda_U R_U, \bar{R}_U)$ solves the conjugate equations for $U$. When we apply the functor $F_c\colon (\Rep G_q,\Phi)\to\Hilb_f$, which defines $G^c_q$, this solution is transformed into $(\lambda_U c R_U, c \bar{R}_U)$.

Since we also have $c\in\U(T \times T)$, it follows that if we choose an orthonormal basis $\{\xi_i\}_i$ in $H_U$ consisting of weight vectors, then the vector $c R_U(1) \in \bar{H}_U\otimes H_U$ has the form $\sum_i\bar \xi_i\otimes \beta_i\rho^{-1/2}\xi_i$ for some $\beta_i\in\T$, where $\rho$ denotes the Woronowicz character of $G_q$. It follows that the Woronowicz character~$f_{-1/2}$ of~$G^c_q$  has the form $\rho^{-1/2}v$ for a unitary~$v$ commuting with~$\U(T)$. Hence $f_1=\rho$ and $v=1$.
\end{proof}

\begin{proof}[Proof of Theorem~\ref{thm:can-Kac-quot-G-q-tau}]
Observe that the ambiguity of the correspondence $c \mapsto [\Phi^c]$ is in that one may perturb $c$ by a $2$-cocycle on $\hat{T} = P$ or by a $2$-cochain on $\widehat{Z(G)}$. By Proposition~\ref{pmaxKac}, if Theorem~\ref{thm:can-Kac-quot-G-q-tau} is true for a cochain $c$ then it is true for any other cochain that differs from $c$ by a $\T$-valued $2$-cocycle on~$\hat T$.  Also, $G^c_q$ does not change when we multiply~$c$ by a $2$-cochain living on $\widehat{Z(G)}$.  It follows that for the proof of the theorem it suffices to consider particular cochains $c$ such that $[\Phi^c]$ exhaust the classes in $H^3(\widehat{Z(G)};\T)$ of the form $[\Phi^c]$. Such representatives $c_\tau$ were constructed in~\cite{MR3340190}*{Proposition~2.6}, and the corresponding quantum groups were denoted by $G_q^\tau$.

Let us recall the structure of the corresponding C$^*$-algebras~$C(G^\tau_q)$ as studied in~\cite{MR3340190}*{Section~3}. Given $\tau = (\tau_i)_{i=1}^{\rk G} \in Z(G)^{\rk G}$, we consider the finite subgroup $T_\tau$ of $T$ generated by the components~$\tau_i$. Then there is a group homomorphism $\psi\colon \hat{T}_\tau \to T/Z(G)$ characterized by $\langle \psi(\chi), \alpha_i \rangle = \tau_i$ for any $\chi \in \hat{T}_\tau$ and any simple root $\alpha_i$.  Composing this homomorphism with the conjugation action of $T/Z(G)$ on $C(G_q)$, we obtain an action of $\hat{T}_\tau$ on $\C[G_q]$, denoted by $\Ad \psi$.  The crossed product $C(G_q) \rtimes_{\Ad \psi} \hat{T}_\tau$ has an action of $T_\tau$ which is given by right translations on the copy of $C(G_q)$ and by the dual action on the copy of $\hat{T}_\tau$.  The C$^*$-algebra $(C(G_q) \rtimes_{\Ad \psi} \hat{T}_\tau)^{T_\tau}$ has the structure of a compact quantum group, induced by those of $G_q$ and $\hat{T}_\tau$.  This is our quantum group $G_q^\tau$.

Using the crossed product presentation of $C(G^\tau_q)$ and results of Soibelman on the representation theory of $C(G_q)$ for $q\ne1$, it is not difficult to obtain information on representations of the C$^*$-algebra~$C(G^\tau_q)$, see~\cite{MR3340190}*{Proposition~3.4}:
\begin{equation}
\label{eq:prim-spec-pres}
\Prim(C(G_q^\tau)) = \coprod_{w \in W} (\theta_w(\hat{T}_\tau) \backslash T / T_\tau) \times \widehat{\theta_w^{-1}(T_\tau)},
\end{equation}
where $W$ is the Weyl group and $\theta_w$ is a certain homomorphism of $\hat{T}_\tau$ into $T$ expressed in terms of~$\psi$ and~$w \in W$.

By Lemma~\ref{lem:wor-char-of-g-q-tau}, the scaling groups of $G_q^\tau$ and $G_q$ are both given by the conjugation action by $q^{-2it\rho^*}\in T$, $t\in\R$. We denote this common scaling group by $(\tau_t)_t$. We claim that the only irreducible representations of~$C(G_q^\tau)$ that are fixed, up to isomorphism, under $(\tau_t)_{t}$ are the evaluations at the points of $T$.

Since $\Ad \psi$ commutes with $\tau_t$, we obtain a natural extension of $\tau_t$ to $\C[G_q] \rtimes_{\Ad \psi} \hat{T}_\tau$ by letting it act trivially on the copy of $\hat{T}_\tau$. The embedding of $\C[G_q^\tau]$ into the crossed product is compatible with this extension of $\tau_t$.  The action of $T_\tau$ on $\C[G_q] \rtimes_{\Ad \psi} \hat{T}_\tau$, being implemented by the right torus translation action $\mathrm{rt}$ and the dual action, also commutes with the above extension of $\tau_t$.  Thus, up to a strong Morita equivalence, the action of the scaling group $(\tau_t)_t$ on $C(G_q^\tau)$ can be identified with the action of $\R$ on $C(G_q) \rtimes_{\mathrm{rt}} T_\tau \rtimes_{\Ad \psi, \widehat{\mathrm{rt}}} \hat{T}_\tau$ such that $\R$ acts by $(\tau_t)_t$ on $C(G_q)$ and trivially on the rest.

The description \eqref{eq:prim-spec-pres} of the primitive spectrum of $C(G_q)$ is obtained from the Morita equivalence $C(G^\tau_q)\sim C(G_q) \rtimes_{\mathrm{rt}} T_\tau \rtimes_{\Ad \psi, \widehat{\mathrm{rt}}} \hat{T}_\tau$, the Effros--Hahn machinery for crossed product by finite groups, and the identification of $\Prim(C(G_q))$ with $\coprod_{w \in W}T$. It follows from the above considerations that on the part of the spectrum of $C(G^\tau_q)$ labeled by $w \in W$ in~\eqref{eq:prim-spec-pres}, the scaling group induces the translation by $q^{-2it(w \rho^* - \rho^*)}$, $t\in\R$, on $\theta_w(\hat{T}_\tau) \backslash T / T_\tau$ (see~\citelist{\cite{MR2914062}*{Lemma~3.4}\cite{MR3009718}*{Lemma~8}} for the action of $T$ on $\Prim(C(G_q))$ induced by the translations on~$C(G_q)$).  Since $w \rho^* \neq \rho^*$ unless $w = e$, we obtain the claim.

The rest of the argument is identical to the proof of \cite{MR2335776}*{Lemma~4.10}. Namely, let $K$ be the maximal Kac quantum subgroup of $G_q^\tau$, and $\pi$ be an irreducible representation of $C(K)$.  Composing~$\pi$ with the restriction map $C(G_q^\tau) \rightarrow C(K)$, we obtain an irreducible representation of~$C(G_q^\tau)$, again denoted by $\pi$.  Since the restriction map $C(G^\tau_q)\to C(K)$ intertwines the scaling groups, and the scaling group of $K$ is trivial, $\pi$ has to be the evaluation at some point of $T$.  This proves that $K$ is contained in~$T$, hence $K = T$.
\end{proof}

\begin{remark}
 After this paper was submitted for publication we found another proof of the above theorem, which is of a more categorical flavour and is based on considerations of the categorical Poisson boundary. It applies to a more general setting than the above and will appear in a forthcoming joint work with Julien Bichon.
\end{remark}

Combining this theorem with Corollary~\ref{cor:coamen-2-cohom-classification} we obtain the following result.

\begin{corollary}\label{ccocycleclass}
Let $G^c_q$ ($q>0$, $q\ne1$) be as in Theorem~\ref{thm:can-Kac-quot-G-q-tau}. Then any dimension-preserving unitary fiber functor $\Rep G^c_q\to\Hilb_f$ factors, uniquely up to isomorphism, through $\Rep T$. Equivalently, the inclusion map $\U(T)\to\U(G^c_q)$ defines a bijection $H^2(\hat T; \T)\cong H^2(\hat G^c_q;\T)$.
\end{corollary}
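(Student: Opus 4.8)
The plan is to obtain Corollary~\ref{ccocycleclass} as a direct combination of Theorem~\ref{thm:can-Kac-quot-G-q-tau}, which identifies the maximal Kac quantum subgroup of $G^c_q$ with the maximal torus $T$, and Corollary~\ref{cor:coamen-2-cohom-classification} together with its cohomological reformulation Corollary~\ref{cor:coamen-2-cohom-classification-layman}. The one point that must be checked before these results can be invoked is that $G^c_q$ lies within the scope of Section~\ref{sec:fact-fib-funct}, namely that $G^c_q$ is coamenable and that $\Irr(G^c_q)$ is at most countable.

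To verify these prerequisites I would observe that $\Rep G^c_q = (\Rep G_q, \Phi^c)$ has exactly the same underlying C$^*$-category, the same tensor product on objects, the same fusion rules, and the same classical dimension function as $\Rep G_q$; only the associativity morphisms and the tensor structure of the fiber functor are altered by the twisting. In particular $\Irr(G^c_q) = \Irr(G_q)$, which is indexed by dominant weights and hence countable. Moreover, the coamenability criterion recalled in the Preliminaries, the equality $\|\Gamma_U\| = \dim U$ for every $U$, involves only the fusion multiplicities $\dim\Hom(H_W, H_{V \circt U})$ and the classical dimensions, all of which are unchanged. Since $G_q$ is coamenable (a standard fact for the $q$-deformations), so is $G^c_q$.

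With the prerequisites in place, Theorem~\ref{thm:can-Kac-quot-G-q-tau} gives that the maximal Kac quantum subgroup of $G^c_q$ is $K = T$. Taking $G = G^c_q$ in Corollary~\ref{cor:coamen-2-cohom-classification} then yields a bijection between the isomorphism classes of unitary fiber functors $\Rep T \to \Hilb_f$ and the isomorphism classes of dimension-preserving unitary fiber functors $\Rep G^c_q \to \Hilb_f$, the correspondence being composition with the forgetful functor $\Rep G^c_q \to \Rep T$. This is precisely the assertion that every dimension-preserving unitary fiber functor on $\Rep G^c_q$ factors, uniquely up to isomorphism, through $\Rep T$. The equivalent cohomological statement follows from Corollary~\ref{cor:coamen-2-cohom-classification-layman} with $K = T$: the inclusion $\U(T) \to \U(G^c_q)$, which coincides with the standard embedding of $\U(T)$ into $\U(G_q) = \U(G^c_q)$ since $T$ is a closed subgroup of $G^c_q$ by construction, induces a bijection $H^2(\hat T; \T) \cong H^2(\hat G^c_q; \T)$.

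The argument is essentially formal once Theorem~\ref{thm:can-Kac-quot-G-q-tau} is in hand, so I do not anticipate a genuine obstacle; the only point demanding a moment's care is the coamenability hypothesis, and even this reduces immediately to the invariance of the fusion rules and classical dimensions under the twisting, combined with the known coamenability of $G_q$.
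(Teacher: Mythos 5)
Your proposal is correct and follows exactly the paper's route: the paper obtains this corollary by simply combining Theorem~\ref{thm:can-Kac-quot-G-q-tau} (maximal Kac quantum subgroup of $G^c_q$ is $T$) with Corollary~\ref{cor:coamen-2-cohom-classification}. Your explicit verification of the coamenability and countability hypotheses—via the invariance of fusion rules and classical dimensions under twisting—is left implicit in the paper but is the right justification.
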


%In particular, for $q>0$, $q\ne1$, the class of compact quantum groups $G^c_q$ is closed under cocycle deformations up to quantum group isomorphisms.

\begin{remark} \label{rq1failure}
The above corollary is not true for $q=1$. The map $H^2(\hat T; \T)\to H^2(\hat G;\T)$ is neither surjective, if~$G$ is sufficiently large as discussed in the introduction, nor injective, as it factors through the quotient of $H^2(\hat T;\T)$ by the action of the Weyl group.
\end{remark}

\subsection{Monoidal equivalences}\label{sec:autoequiv}

If $c'$ is a $\T$-valued $2$-cocycle on $\hat{T} = P$ and $c$ is a $2$-cochain as in the previous section, the twisting of $G^c_q$ by $c'$ is equal to $G^{c c'}_q$, as seen from the coproduct formula for~$\U(G^c_q)$. Thus, we know from Corollary~\ref{ccocycleclass} that the class of compact quantum groups of the form~$G_q^c$ is closed under cocycle twisting when $q \neq 1$. Our goal is to classify such quantum groups up to isomorphism, which will be completed only for the simple case in this paper.

In general, a quantum group isomorphism $G \to G'$ is the same thing as a unitary monoidal equivalence $F \colon \Rep G' \to \Rep G$ and a natural unitary monoidal isomorphism $\eta\colon F^G F \simeq F^{G'}$, where $F^G$ and $F^{G'}$ are the canonical fiber functors of $G$ and $G'$. Therefore, in view of injectivity of the  map $H^2(\hat T; \T)\to H^2(\hat G^c_q;\T)$, in order to classify the quantum groups~$G^c_q$ up to isomorphism it remains to understand the groups $\Aut^\otimes(\Rep G^c_q)$ and classify the categories $\Rep G^c_q$ up to unitary monoidal equivalence.

\smallskip

We start by computing the invariant $2$-cohomology. Let us remark that our arguments in this section will work also for $q=1$.

\begin{proposition} \label{pinvcoh}
For any $q>0$ and any $\T$-valued $2$-cochain $c$ on $P$ such that $\partial c$ descends to $P/Q$, we have group isomorphisms
$$
H^2(P/Q; \T)\cong H^2_{G_q^c}(\hat{G}_q^c; \T)\cong H^2_{G_q^c}(\hat{G}_q^c; \C^\times),
$$
induced by the inclusion $\U(Z(G)) \to \U(G_q^c)$.
\end{proposition}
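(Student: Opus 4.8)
The plan is to reinterpret invariant $2$-cochains on $\hat G_q^c$ categorically and then to show, using the fusion rules of $G$ and the cocycle identity, that every invariant $2$-cocycle is cohomologous to one determined by central characters. An invariant $2$-cochain $\E$ is an invertible element of $\U(G^2)$ commuting with the image of the coproduct $\hat\Delta_c=c\,\hat\Delta(\cdot)\,c^*$ of $G_q^c$; equivalently, for irreducible $U,V$ the operator $(\pi_U\otimes\pi_V)(\E)$ on $H_U\otimes H_V$ is a $G_q^c$-endomorphism of $U\circt V$. Since $\Rep G_q^c$ and $\Rep G_q$ share objects, morphisms and the bifunctor $\circt$, Schur's lemma together with naturality shows that $\E$ acts as a scalar $\lambda_\E(U,V;W)$ on each $W$-isotypic component. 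Because $Z(G)\subset T$ and $c\in\U(T\times T)$, the coproduct $\hat\Delta_c$ agrees with $\hat\Delta$ on $\U(Z(G))$, and central characters are multiplicative, so a group $2$-cochain $\sigma$ on $\widehat{Z(G)}=P/Q$ yields the invariant cochain acting by $\sigma(\omega_U,\omega_V)$, with cocycles mapping to cocycles and coboundaries to coboundaries. This produces the maps $H^2(P/Q;\T)\to H^2_{G_q^c}(\hat G_q^c;\T)$ and its $\C^\times$-analogue. I note that $\Phi^c$ enters only through the scalars $\Phi^c(\omega_U,\omega_V,\omega_W)$, so nothing below will use $q\ne1$.

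For injectivity it is cleanest to argue on the categorical side. If the image of $\sigma$ is a coboundary, the corresponding object-preserving autoequivalence of $(\Rep G_q,\Phi^c)$ is monoidally isomorphic to the identity via some natural monoidal transformation $\eta$ with scalar components $\eta_U$. Monoidality forces $\eta_W=\sigma(\omega_U,\omega_V)\,\eta_U\eta_V$ for every $W\subset U\circt V$; since the right-hand side is independent of $W$, the scalar $\eta_W$ depends only on $\omega_W$, provided any two irreducibles with a common central character are linked through the fusion rules, which holds for $\Rep G$. Writing $\eta_U=b(\omega_U)$ then gives $\sigma=\partial b$, so $\sigma$ is trivial in $H^2(P/Q;\T)$.

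Surjectivity is the heart of the matter. Given an invariant $2$-cocycle $\E$, I would first control it on Cartan components: for dominant $\lambda,\mu$ the weight $V_{\lambda+\mu}$ occurs in $V_\lambda\circt V_\mu$ with multiplicity one, and, writing $p(\lambda,\mu)$ for the scalar of $\E$ there, restricting the cocycle identity to the top component $V_{\lambda+\mu+\nu}$ of a triple product gives $\partial p=\Phi^c=\partial c$ as functions on the free lattice $P$. Hence $p\,c^{-1}$ is a genuine $2$-cocycle on $P$, and freeness of $P$ lets me choose a central $1$-cochain $a=(a_{V_\lambda})_{\lambda\in P_+}$ so that $\E\cdot\partial a$ acts trivially on all Cartan components. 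The main obstacle is the remaining step: propagating this normalization to every isotypic component by the cocycle identity and naturality with respect to the projections $V_\lambda\circt V_\mu\to W$, and showing that the full, non-Cartan fusion relations force the residual datum to be a scalar depending only on the central characters, i.e.\ to factor through $P/Q$ rather than through all of $P$. For a torus no such reduction occurs and one indeed obtains $H^2(P;\T)$; for semisimple $G$ it is precisely the non-abelian part of the fusion that cuts this down to the image of $H^2(P/Q;\T)$, and verifying this is where essentially all the work lies.

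Finally, running the same construction with invertible in place of unitary cocycles produces a surjection $H^2(P/Q;\C^\times)\to H^2_{G_q^c}(\hat G_q^c;\C^\times)$, which the same argument identifies as an isomorphism. Since $P/Q$ is finite and $\C^\times\cong\T\times\R_{>0}$ with $\R_{>0}$ uniquely divisible, one has $H^2(P/Q;\R_{>0})=0$ and hence $H^2(P/Q;\C^\times)\cong H^2(P/Q;\T)$. Combining the three isomorphisms yields the asserted chain $H^2(P/Q;\T)\cong H^2_{G_q^c}(\hat G_q^c;\T)\cong H^2_{G_q^c}(\hat G_q^c;\C^\times)$, all induced by $\U(Z(G))\to\U(G_q^c)$.
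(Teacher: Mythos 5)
Your construction of the map $H^2(P/Q;\T)\to H^2_{G_q^c}(\hat G_q^c;\T)$ and your injectivity argument (via the chain--group fact that irreducibles with the same central character are linked by the fusion rules) are fine, but the proposal has a genuine gap exactly where you say it does: surjectivity is never proved, and you acknowledge that ``verifying this is where essentially all the work lies.'' That unverified claim --- that an invariant $2$-cocycle, after normalization on highest-weight components, is forced by the non-abelian fusion rules to come from $P/Q$ --- is not a routine naturality argument; for the untwisted category it is precisely Theorem~1 of \cite{MR2959039}, a substantial result. The paper does not reprove it: its entire proof of Proposition~\ref{pinvcoh} is a reduction of the twisted case to the untwisted one, followed by an appeal to that theorem. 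The reduction is done in two ways: categorically, using that $\Phi^c(X,Y,Z)$ is a scalar for simple $X,Y,Z$, so an object-preserving monoidal autoequivalence of $(\Rep G_q,\Phi^c)$ is the same thing as one of $\Rep G_q$; and Hopf-algebraically, by verifying (using $\partial c\in\U(Z(G)^3)$) that $\E\mapsto c^*\E c$ is a bijection between invariant $2$-cocycles on $\hat G_q^c$ and on $\hat G_q$. You instead set out to prove the hard untwisted-type statement from scratch and stop at its core.

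Moreover, the step you do sketch starts from an incorrect identity. Since $\Phi^c$ acts as a scalar on $H_U\otimes H_V\otimes H_W$ for irreducible $U,V,W$, it is central in $\U(G_q^3)$ and cancels from the compatibility condition~\eqref{eq:tensor-functor-assoc-compat} for an autoequivalence of $(\Rep G_q,\Phi^c)$; equivalently, in the Hopf picture the scalars of $\E$ on the $G_q^c$-top components agree with those of $c^*\E c$ on the $G_q$-top components. Either way, the scalars $p(\lambda,\mu)$ on $V_{\lambda+\mu}\subset V_\lambda\circt V_\mu$ satisfy $\partial p=1$, not $\partial p=\partial c$ --- this cancellation is exactly the mechanism that makes the paper's reduction to the untwisted case work. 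In addition, even granting your identity, the normalization step fails: a $\T$-valued $2$-cocycle on $P\cong\Z^{\rk G}$ need not be a coboundary ($H^2(P;\T)$ is the nontrivial group of alternating bicharacters once $\rk G\ge2$), so ``freeness of $P$'' does not let you choose a central $1$-cochain $a$ making $\E\cdot\partial a$ trivial on all Cartan components. Both the launching point and the admitted missing middle of your surjectivity argument therefore need to be replaced; the efficient repair is the paper's: transport the cocycle to $\hat G_q$ and quote the computation $H^2_{G_q}(\hat G_q;\T)\cong H^2(P/Q;\T)$ of \cite{MR2959039}.
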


\bp
For the trivial $c$, the assertion is already proved in \cite{MR2959039}, see also~\cite{neshveyev-tuset-book}. The general case can be reduced to this. Let us again give two arguments for this, one categorical and the other Hopf algebraic.

Let $E$ be a unitary monoidal autoequivalence of $(\Rep G_q, \Phi^c)$ which maps every object to an isomorphic object.  Since for any simple objects $X$, $Y$ and $Z$ in $\Rep G_q$ the associativity morphism $\Phi^c(X, Y, Z)$ is scalar, it is easy to see that $E$ can be also considered as a unitary monoidal autoequivalence $\tilde E$ of $\Rep G_q$. Then, by \cite{MR2959039}*{Theorem~1}, $\tilde E$ is naturally unitarily monoidally isomorphic to an autoequivalence coming from some $[c'] \in H^2(P/Q; \T)$.  In turn, any such isomorphism defines a natural unitary monoidal isomorphism between $E$ and the autoequivalence induced by $c'$ on $(\Rep G_q, \Phi^c)$. The same works for nonunitary autoequivalences, and we obtain the assertion.

\smallskip
The second proof goes as follows. Recall that the coproduct of $\U(G^c_q)$ is given by $c \hat{\Delta}_q(x) c^*$, where~$\hat{\Delta}_q$ is the coproduct on $\U(G_q)$. Thus, an element $\E\in\U(G^c_q\times G^c_q)$ is an invariant $2$-cochain on~$\hat G^c_q$ if and only if $c^*\E c$ is an invariant $2$-cochain on $\hat G_q$. Moreover, we claim that $\E$ is an invariant $2$-cocycle on $\hat{G}^c_q$ if and only if $c^* \E c$ is an invariant $2$-cocycle on $\hat{G}_q$, and so we can apply to $c^* \E c$ the results of~\cite{MR2959039}.

Let us verify our claim. Suppose that $\E$ is an invariant $2$-cocycle on $\hat G^c_q$, so
$$
(\E c \otimes 1)(\hat{\Delta}_q \otimes \iota)(\E) (c^* \otimes 1) = (1 \otimes \E c) (\iota \otimes \hat{\Delta}_q)(\E) (1 \otimes c^*).
$$
We then want to verify the corresponding condition for $c^* \E c$ on $\hat G_q$,
$$
(c^* \E c \otimes 1) (\hat{\Delta}_q \otimes \iota)(c^* \E c) = (1 \otimes c^* \E c) (\iota \otimes \hat{\Delta}_q)(c^* \E c).
$$
Using the invariance of $\E$ with respect to the coproduct of $\hat{G}^c_q$, the left hand side is seen to be equal to $y = (\hat{\Delta}\otimes \iota)(c^*)(c^*\otimes1) ( \E c \otimes 1) (\hat{\Delta}_q \otimes \iota)(\E )(\Dhat\otimes\iota)(c)$, while the right hand side becomes $z = (\iota \otimes \hat{\Delta})(c^*) (1 \otimes c^*)(1\otimes \E c) (\iota \otimes \hat{\Delta}_q)(\E)(\iota\otimes\Dhat) (c)$. (Note that $c \in \U(T^2)$ implies the equalities of the form $(\hat{\Delta}_q \otimes \iota)(c) = (\hat{\Delta} \otimes \iota)(c)$, and that the terms like $c \otimes 1$ and $(\iota \otimes \hat{\Delta})(c)$ commute with each other.) Then, using that $\partial(c) = (1 \otimes c) (\iota \otimes \hat{\Delta})(c) (\hat{\Delta} \otimes \iota)(c^*) (c^* \otimes 1)$, we obtain
$$
z^{-1} y = (\iota \otimes \hat{\Delta})(c^*) (\iota \otimes \hat{\Delta}_q)(\E)^{-1}(1 \otimes \E c)^{-1} \partial(c) (\E c \otimes 1) (\hat{\Delta}_q \otimes \iota)(\E)(\Dhat\otimes\iota)( c).
$$
Now, the assumption $\partial(c) \in \U(Z(G)^3)$ implies that we may move $\partial(c)$ in the middle to outside, then the $2$-cocycle condition for $\E$ implies that
$$
z^{-1} y =  \partial(c) (\iota \otimes \hat{\Delta})(c^*) (1 \otimes c^*) (c \otimes 1) (\hat{\Delta} \otimes \iota)(c) = 1.
$$
Therefore $y = z$, which is what we wanted to show. The reverse implication can be argued in the same way.
\ep

Let us denote the based root datum of $G$ by $\Psi$. % choice of T,...
There is a natural action of $\sigma \in \Aut(\Psi)$ on the quantized universal enveloping algebra $U_q(\mathfrak{g})$ by Hopf $*$-algebra automorphisms, which is given on the generators by $E_j \mapsto E_{\sigma j}$, $K_j\mapsto K_{\sigma j}$.  This induces an action of $\Aut(\Psi)$ on $(\U(G_q),\Dhat_q)$. The restriction of this action to $T$, hence to $Z(G)$, does not depend on $q$. For a given $c$, we denote by $\Aut(\Psi)_{c,q}$ the stabilizer subgroup of $\Aut(\Psi)$ for the image of $\partial c$ in $H^3_{G_q}(\hat{G}_q; \T)$.

\begin{theorem} \label{tautoeq}
For any $q>0$ and any $\T$-valued $2$-cochain $c$ on $P$ such that $\partial c$ descends to $P/Q$, we have a short exact sequence
$$
1\to H^2(P/Q; \T)\to \Aut^\otimes(\Rep G_q^c)\to \Aut(\Psi)_{c,q}\to1.
$$
\end{theorem}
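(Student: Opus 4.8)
The plan is to obtain the displayed sequence as the kernel--image decomposition of the homomorphism sending a unitary monoidal autoequivalence of $\Rep G_q^c$ to the fusion-rule automorphism it induces on the simple objects. Throughout I identify $\Rep G_q^c$ with the twisted category $(\Rep G_q,\Phi^c)$, whose underlying C$^*$-category and tensor bifunctor coincide with those of $\Rep G_q$ and which differs from it only in its associator.

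First I would pin down the kernel. An autoequivalence that fixes every isomorphism class of objects is, by the discussion of invariant cohomology in the preliminaries, exactly an element of the subgroup $H^2_{G_q^c}(\hat G_q^c;\T)$ of $\Aut^\otimes(\Rep G_q^c)$; Proposition~\ref{pinvcoh} identifies this subgroup with $H^2(P/Q;\T)$ through the inclusion $\U(Z(G))\to\U(G_q^c)$. This simultaneously gives the injectivity of the left-hand map and the identification of the kernel of the right-hand map.

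Next, any $E\in\Aut^\otimes(\Rep G_q^c)$ permutes the simple objects, which carry the fusion rules of $G$, and the resulting fusion-rule automorphism is realized by a unique $\sigma\in\Aut(\Psi)$ by the classification underlying \cite{MR2959039}*{Theorem~1}. To see that this $\sigma$ must stabilize $[\partial c]$, I would bring in the standard autoequivalence $\alpha_\sigma$ of $\Rep G_q$ coming from the Hopf $*$-algebra automorphism of $U_q(\mathfrak g)$. Transporting the associator along $\alpha_\sigma$ makes it a unitary monoidal equivalence $(\Rep G_q,\Phi^c)\to(\Rep G_q,\Phi^{\sigma c})$, where $\Phi^{\sigma c}$ is the associator obtained by letting $\sigma$ act on the central characters; here one uses that $\Phi^c$ acts on simple objects by the scalar $\Phi^c(\omega_X,\omega_Y,\omega_Z)$ and that the action of $\Aut(\Psi)$ on $\U(Z(G))$ is independent of $q$. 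Then $E\circ\alpha_\sigma^{-1}$ is an object-fixing unitary monoidal equivalence $(\Rep G_q,\Phi^{\sigma c})\to(\Rep G_q,\Phi^c)$, and since such categories are parametrized up to object-fixing equivalence by $H^3_{G_q}(\hat G_q;\T)$, its existence forces $[\Phi^{\sigma c}]=[\Phi^c]$, that is $\sigma\in\Aut(\Psi)_{c,q}$. Hence $E\mapsto\sigma$ is a well-defined homomorphism into $\Aut(\Psi)_{c,q}$ whose kernel is the subgroup above.

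Surjectivity onto $\Aut(\Psi)_{c,q}$ runs this argument in reverse: for $\sigma$ in the stabilizer, the relation $[\Phi^{\sigma c}]=[\Phi^c]$ together with the same parametrization yields an object-fixing equivalence $G=(\id_\un,\Id,\E^{-1})\colon(\Rep G_q,\Phi^{\sigma c})\to(\Rep G_q,\Phi^c)$ for a suitable invariant unitary $2$-cochain $\E$, and then $G\circ\alpha_\sigma$ is an autoequivalence of $\Rep G_q^c$ inducing $\sigma$. The routine inputs are the kernel computation and the existence of $\alpha_\sigma$; the crux, and the step I expect to demand the most care, is the correct transport of $\Phi^c$ through $\alpha_\sigma$ and the identification of the transported category as $(\Rep G_q,\Phi^{\sigma c})$, so that the stabilizer of $[\partial c]$ emerges as precisely the image. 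One must also respect the $q$-dependence hidden in $\Aut(\Psi)_{c,q}$, since it is the image of $\partial c$ in $H^3_{G_q}(\hat G_q;\T)$, rather than its class in $H^3(P/Q;\T)$, that is required to be fixed.
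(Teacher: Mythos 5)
Your proposal is correct and follows essentially the same route as the paper: the kernel is identified via Proposition~\ref{pinvcoh}, the homomorphism to $\Aut(\Psi)$ comes from fusion-ring automorphisms (the paper cites McMullen~\cite{MR733774} for $\Aut(\Psi)\cong\Aut(R^+(G))$ rather than~\cite{MR2959039}), the stabilizer condition is obtained by composing the given autoequivalence with the equivalence $(\Rep G_q,\Phi^c)\simeq(\Rep G_q,\Phi^{\sigma(c)})$ induced by the action of $\sigma$ on $\U(G_q)$, and surjectivity follows by reversing the argument. The only cosmetic difference is that where you invoke the parametrization of the twisted categories by $H^3_{G_q}(\hat G_q;\T)$ as a black box, the paper re-derives the needed injectivity inline, by normalizing the object-preserving equivalence so that its underlying functor is the identity and reading off $\Phi^{\sigma(c)}=\Phi^c_\E$ from the compatibility condition~\eqref{eq:tensor-functor-assoc-compat}.
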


\bp
Let $R^+(G)$ be the representation semiring of $G$.  By a result of McMullen~\cite{MR733774}, the natural map $\Aut(\Psi) \to \Aut(R^+(G))$ is an isomorphism.  Thus, if $F$ is an autoequivalence of $\Rep G^c_q$, the induced automorphism of $R^+(G_q^c) = R^+(G)$ can be considered as an element of $\Aut(\Psi)$.  This way we obtain a group homomorphism $\Aut^\otimes(\Rep G_q^c) \to \Aut(\Psi)$. The kernel of the homomorphism  $\Aut^\otimes(\Rep G_q^c) \to \Aut(\Psi)$ consists of the monoidal autoequivalences preserving the isomorphism classes of objects, so it is isomorphic to $H^2(P/Q;\T)$ by Proposition~\ref{pinvcoh}.

Let us next show that the image of $\Aut^\otimes(\Rep G_q^c) \to \Aut(\Psi)$ is contained in~$\Aut(\Psi)_{c,q}$. Consider an autoequivalence of $\Rep G^c_q$ and let $\sigma$ be its image in $\Aut(\Psi)$. The action of $\Aut(\Psi)$ on $\U(G_q)$ defines an isomorphism $G^c_q\cong G^{\sigma(c)}_q$, hence a unitary monoidal equivalence between $(\Rep G_q,\Phi^c)$ and $(\Rep G_q,\Phi^{\sigma(c)})$. Since $\sigma$ is defined by an autoequivalence of $(\Rep G_q,\Phi^c)$, it follows that there exists a unitary monoidal equivalence $F = (F_0, F_1, F_2)$ between $(\Rep G_q,\Phi^c)$ and $(\Rep G_q,\Phi^{\sigma(c)})$ which preserves the isomorphism classes of objects of $\Rep G_q$. Replacing $F$ by a naturally unitarily monoidally isomorphic functor, we may assume that $F_1$ is simply the identity functor on $\Rep G_q$. Then $F_2$ is given by the action of $\E^{-1}$ for an invariant unitary $2$-cochain $\E$ on~$\hat G_q$, and~\eqref{eq:tensor-functor-assoc-compat} reads as $\Phi^{\sigma(c)}=\Phi^c_\E$. Thus $\sigma$ preserves the cohomology class of $\Phi^c$ in $H^3_{G_q}(\hat G_q;\T)$.

Reversing the above argument, it also becomes clear that the map $\Aut^\otimes(\Rep G_q^c) \to \Aut(\Psi)_{c,q}$ is surjective.
\ep

At this point the precise form of $\Aut(\Psi)_{c,q}$ for general $G$, $c$ and $q$ is not clear to us.  However, for a class of simple $G$ containing $\SU(n)$, we have the following result.

\begin{proposition}\label{prop:auto-psi-act-triv}
If $G$ is a simply connected simple compact Lie group not of type D$_{2m}$, we have $\Aut^\otimes(\Rep G^c_q) \cong \Aut(\Psi)$ for any $c$ and $q>0$.
\end{proposition}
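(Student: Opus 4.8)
The plan is to read off both ends of the short exact sequence supplied by Theorem~\ref{tautoeq},
$$
1\to H^2(P/Q; \T)\to \Aut^\otimes(\Rep G_q^c)\to \Aut(\Psi)_{c,q}\to1,
$$
and reduce the proposition to two cohomological facts about the finite abelian group $P/Q\cong\widehat{Z(G)}$. The hypothesis that $G$ is simple and not of type $\mathrm{D}_{2m}$ is precisely the condition that $Z(G)$, hence $P/Q$, is \emph{cyclic}: running through the simple types, the center is $\Z/(n+1)$ for $\mathrm{A}_n$, $\Z/2$ for $\mathrm{B}_n,\mathrm{C}_n,\mathrm{E}_7$, $\Z/4$ for $\mathrm{D}_n$ with $n$ odd, $\Z/3$ for $\mathrm{E}_6$, and trivial for $\mathrm{E}_8,\mathrm{F}_4,\mathrm{G}_2$, while $\mathrm{D}_{2m}$ is the unique simple type with noncyclic center $\Z/2\times\Z/2$.

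First I would dispose of the kernel. For a finite cyclic group $A$ the coefficient sequence $0\to\Z\to\R\to\T\to0$ gives $H^2(A;\T)\cong H^3(A;\Z)$, and this vanishes since $H^{\mathrm{odd}}(A;\Z)=0$. Hence $H^2(P/Q;\T)=0$, the sequence above collapses to an isomorphism $\Aut^\otimes(\Rep G_q^c)\cong\Aut(\Psi)_{c,q}$, and it remains to prove $\Aut(\Psi)_{c,q}=\Aut(\Psi)$ for every $c$ and $q$.

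The heart of the matter is that $\Aut(\Psi)$ fixes the class $[\partial c]\in H^3_{G_q}(\hat G_q;\T)\cong H^3(P/Q;\T)$; I would in fact show that $\Aut(\Psi)$ acts trivially on all of $H^3(P/Q;\T)$, so that the stabilizer of $[\partial c]$ is automatically everything. The action on this $H^3$ is induced by the action of $\Aut(\Psi)$ on $Z(G)$, equivalently on $P/Q$. For the types at hand $\Aut(\Psi)$ is either trivial (types $\mathrm{B},\mathrm{C},\mathrm{E}_7,\mathrm{E}_8,\mathrm{F}_4,\mathrm{G}_2$, and $\mathrm{A}_1$) or cyclic of order two generated by the nontrivial diagram automorphism, which for $\mathrm{A}_n\,(n\ge2)$, $\mathrm{D}_n\,(n\text{ odd})$ and $\mathrm{E}_6$ equals $-w_0$, where $w_0$ is the longest element of the Weyl group $W$. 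Since $W$ fixes $P/Q$ pointwise (one has $w\lambda-\lambda\in Q$ for all $w\in W$), the automorphism $-w_0$ acts on $P/Q$ by the inversion $x\mapsto-x$, so the image of $\Aut(\Psi)$ in $\Aut(P/Q)$ is contained in $\{\pm1\}$.

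Finally I would invoke the ring structure $H^*(P/Q;\Z)\cong\Z[u]/(Nu)$ with $\deg u=2$, where $N=|P/Q|$: under the natural isomorphism $H^3(P/Q;\T)\cong H^4(P/Q;\Z)=\Z/N\cdot u^2$, the inversion $x\mapsto-x$ sends $u\mapsto-u$ and hence fixes $u^2$, acting trivially on $H^3(P/Q;\T)$. Therefore every element of $\Aut(\Psi)$ fixes $[\partial c]$, giving $\Aut(\Psi)_{c,q}=\Aut(\Psi)$ and the claimed isomorphism. The step demanding real care is this last computation together with the identification of the nontrivial diagram automorphism as $-w_0$: the naive guess that inversion acts by $(-1)^3$ on a third cohomology group is \emph{false} here, because $H^*(P/Q;\Z)$ is generated in degree two rather than degree one, and it is exactly this parity phenomenon, invisible for cyclic centers but not for $\Z/2\times\Z/2$, that makes the exclusion of $\mathrm{D}_{2m}$ genuine rather than cosmetic.
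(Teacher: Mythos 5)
Your proof is correct, and its skeleton necessarily coincides with the paper's (run Theorem~\ref{tautoeq}, kill the kernel using cyclicity of $P/Q$, then show the stabilizer $\Aut(\Psi)_{c,q}$ is all of $\Aut(\Psi)$), but your treatment of the crucial step is genuinely different. Where the paper verifies triviality of the $\Aut(\Psi)$-action on $H^3(P/Q;\T)$ by hand --- writing down the explicit representative $\phi(a,b,c)=\omega^{(\lfloor\frac{a+b}{n}\rfloor-\lfloor\frac{a}{n}\rfloor-\lfloor\frac{b}{n}\rfloor)c}$ and exhibiting an explicit $2$-cochain on $\Z/n\Z$ whose coboundary is $\phi\psi^{-1}$ with $\psi=\phi(-\cdot,-\cdot,-\cdot)$ --- you deduce it from the ring structure $H^*(\Z/N;\Z)\cong\Z[u]/(Nu)$, $\deg u=2$, together with the natural Bockstein isomorphism $H^3(\Z/N;\T)\cong H^4(\Z/N;\Z)$: inversion sends $u\mapsto-u$, hence fixes $u^2$. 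This is cleaner and uniform across types, avoids choosing cocycle representatives, and makes the mechanism transparent (the generator sits in even degree); the paper's computation is, by contrast, elementary and self-contained. Your identification of the nontrivial diagram automorphism as $-w_0$, acting as $-1$ on $P/Q$ because $W$ acts trivially there, is also tidier than the paper's case-by-case observation that any automorphism of $\Z/3$ or $\Z/4$ is $\pm1$.

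Two blemishes, neither fatal. First, you assert $H^3_{G_q}(\hat G_q;\T)\cong H^3(P/Q;\T)$; only injectivity of the natural map is available (Proposition~\ref{prop:simple-G-3-cohom-inj}), and what your argument actually needs is not an isomorphism but $\Aut(\Psi)$-equivariance of that map --- stated explicitly in the paper, left implicit by you --- so that fixing $[\partial c]$ in $H^3(P/Q;\T)$ forces fixing its image in $H^3_{G_q}(\hat G_q;\T)$, which is what defines $\Aut(\Psi)_{c,q}$. Second, your closing diagnosis of the D$_{2m}$ exclusion is off target: on $\Z/2\times\Z/2$ inversion \emph{is} the identity, so no parity phenomenon is at stake there. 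What actually fails for D$_{2m}$ is that $H^2(P/Q;\T)$ is nontrivial, and that the relevant diagram automorphism acts on the center by swapping the two $\Z/2$ factors --- an automorphism that is not inversion and that genuinely permutes classes in $H^3(P/Q;\T)$ (it exchanges the cocycles $\phi_1$ and $\phi_2$ appearing in the proof of Proposition~\ref{prop:simple-G-3-cohom-inj}), which is why $\Aut(\Psi)_{c,q}$ can then be a proper subgroup.
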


\bp
By assumption $P/Q$ is a cyclic group, which implies that $H^2(P/Q; \T)$ is trivial. Moreover, $\Aut(\Psi)$ is either of order $2$ (type A$_n$, D$_{2m+1}$, E$_6$) or trivial, and for the latter case we have nothing to do. Notice that the natural map $H^3(P/Q; \T) \to H^3_{G_q}(\hat{G}_q;\T)$ is equivariant with respect to the action of $\Aut(\Psi)$. We claim that even when $\Aut(\Psi)$ contains a nontrivial element $\sigma$, it acts trivially already on $H^3(P/Q; \T)$.

Let us first consider the case of type A$_{n-1}$. The element $\sigma$ induces the automorphism of the Hopf $*$-algebra $U_q(\su(n))$ characterized by $E_i\mapsto E_{n-i}$, $1\le i\le n-1$. On the center of $\SU_q(n)$, identified with $\Z/n\Z$, it is given by $a\mapsto -a$. The induced map at the level of cocycles is $\phi \mapsto \psi$, where $\psi(a,b,c)=\phi(-a,-b,-c)$. But these cocycles are cohomologous: if $\phi$ is given by $\phi(a, b, c) = \omega^{(\lfloor \frac{a+b}{n} \rfloor - \lfloor \frac{a}{n} \rfloor - \lfloor \frac{b}{n}\rfloor) c}$ for an $n$th root of unity $\omega$, we see that $\phi\psi^{-1}$ is the coboundary of the $2$-cochain $(a,b)\mapsto\omega^{-\left(\lfloor\frac{a}{n}\rfloor +\lfloor-\frac{a}{n}\rfloor\right)b}$ on $\Z/n\Z$.

It remains to consider the types D$_{2m+1}$ and E$_6$. We either have $Z(G)  \cong \Z/4\Z$ or $Z(G) \cong \Z/3\Z$. The action of $\sigma$ must be either $a \mapsto -a$ or trivial, and again everything goes in the same way as in the A$_n$ case.
\ep

\begin{remark}
Proposition~\ref{prop:auto-psi-act-triv} fails for the D$_{2 m}$ case in two ways. First, $H^2(P/Q; \T)$ is no longer trivial. Next, as follows from Proposition~\ref{prop:simple-G-3-cohom-inj} below, $\Aut(\Psi)_{c,q}$ can be a proper subgroup of $\Aut(\Psi)$, depending on the choice of $c$ (but still independent of $q$).
\end{remark}

If the action of $\Aut(\Psi)$ on $H^3(P/Q;\T)$ is nontrivial, then computation of $\Aut(\Psi)_{c,q}$ is part of the more general task of finding when $\Phi^{c_1}$ and $\Phi^{c_2}$ define the same class in $H^3_{G_q}(\hat G_q;\T)$.

When $G$ is simple, the cocycles $\Phi^c$ exhaust the whole group $H^3(P/Q; \T)$, as observed in~\cite{MR3340190}*{Proposition~2.6}. Therefore in this case what we want is to understand the map $H^3(P/Q; \T) \to H^3_{G_q}(\hat{G}_q)$. We have the following result.

\begin{proposition}\label{prop:simple-G-3-cohom-inj}
If $G$ is a simply connected simple compact Lie group, then the
natural map $H^3(P/Q; \T) \to H^3_{G_q}(\hat{G}_q)$ is injective for
any $q>0$.
\end{proposition}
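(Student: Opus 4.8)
The plan is to show that the kernel of this group homomorphism is trivial: assuming that the image $\Phi^c$ of a class $[\partial c]\in H^3(P/Q;\T)$ becomes an invariant $3$-coboundary on $\hat G_q$, I want to conclude $[\partial c]=0$. Recall from the text that for simple $G$ every class in $H^3(P/Q;\T)$ has the form $[\partial c]$, and that $\Phi^c$ acts on $H_U\otimes H_V\otimes H_W$ by the scalar $\partial c(\omega_U,\omega_V,\omega_W)$ depending only on the central characters; equivalently $\Rep G_q$ is faithfully graded by $C:=P/Q$, every coset being realized as a central character. So suppose $\Phi^c=(1\otimes\E)(\iota\otimes\Dhat)(\E)(\Dhat\otimes\iota)(\E^{-1})(\E^{-1}\otimes1)$ for an invariant unitary $2$-cochain $\E$ on $\hat G_q$.

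The first step is to extract scalars from $\E$ using highest weights. For dominant weights $\lambda,\mu$ let $U_\lambda$ be the irreducible of highest weight $\lambda$; the Cartan component $U_{\lambda+\mu}\subset U_\lambda\otimes U_\mu$ occurs with multiplicity one, so the invariant (hence $G_q$-central) endomorphism $\E_{U_\lambda,U_\mu}$ acts on it, and in particular on the highest weight vector $\xi_\lambda\otimes\xi_\mu$, by a single scalar $b(\lambda,\mu)\in\T$. Evaluating the coboundary relation above on $\xi_\lambda\otimes\xi_\mu\otimes\xi_\nu$ and using naturality of $\E$ with respect to the inclusions of Cartan components (so that $\E_{U_\lambda,\,U_\mu\otimes U_\nu}$ reads off $b(\lambda,\mu+\nu)$, etc.), every factor becomes one of these scalars and one obtains, for all $\lambda,\mu,\nu$ in the monoid $P^+$ of dominant weights,
$$
\partial c(\bar\lambda,\bar\mu,\bar\nu)=b(\mu,\nu)\,b(\lambda,\mu+\nu)\,b(\lambda+\mu,\nu)^{-1}\,b(\lambda,\mu)^{-1}.
$$
Thus the pullback of $\partial c$ to $P^+$ is the coboundary of the $2$-cochain $b$. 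Choosing a set-theoretic section $s\colon C\to P^+$ of $P\to C$ and putting $\bar b(g,h)=b(s(g),s(h))$ gives a $2$-cochain on $C$ with $\partial\bar b=\partial c\cdot D$, where $D$ is the explicit $3$-cocycle $D(g,h,k)=\frac{b(s(g),s(hk))}{b(s(g),s(h)+s(k))}\cdot\frac{b(s(g)+s(h),s(k))}{b(s(gh),s(k))}$, a product of ratios of values of $b$ whose arguments differ by an element of $Q$. Since $[\partial\bar b]=0$, this means $[\partial c]=[D]^{-1}$, so it remains to prove $[D]=0$ in $H^3(C;\T)$.

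The main obstacle is exactly this descent from $P$ to $C$, and it cannot be settled by the highest weight data alone. Indeed $0\to Q\to P\to C\to0$ does not split, and $H^3(P;\T)$ is a torus, in particular torsion-free, so the inflation $H^3(C;\T)\to H^3(P;\T)$ from the finite group is the zero map; hence the relation $\partial b=\mathrm{infl}(\partial c)$ carries no information by itself, and the cocycle $D$, which measures the $Q$-variation of $b$, has to be analyzed on its own. For this I would use the relations obtained by letting $\lambda,\mu,\nu$ range over their full $Q$-cosets in the displayed identity — equivalently, the compatibilities forced by $\E$ acting on the non-Cartan fusion channels and by naturality with respect to the duality morphisms $R_U,\bar R_U$ — in order to rewrite $D$ as a coboundary on $C$. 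As $G$ is simple, $C$ is cyclic except in type $\mathrm D_{2m}$, where $C\cong\Z/2\times\Z/2$, and the argument can be pinned down using the explicit generators of $H^3(C;\T)$ exactly as in the computations of Proposition~\ref{prop:auto-psi-act-triv}.

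As a conceptual cross-check I would keep in mind that $(\Rep G_q,\Phi^c)$ is a faithfully $C$-graded extension of its trivial-degree subcategory whose only dependence on $c$ is through the associator twist by the inflation of $\partial c$; extensions with fixed lower data form a torsor under $H^3(C;\T)$, and an object-preserving monoidal equivalence respects the grading and fixes that data, so the existence of such an equivalence to the untwisted category forces the $H^3(C;\T)$-difference to vanish. This is the degree-three counterpart of the degree-two isomorphism of Proposition~\ref{pinvcoh}, and it is the structural reason the map is injective.
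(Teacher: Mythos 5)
Your first step (reading off scalars $b(\lambda,\mu)$ from the action of the invariant cochain $\E$ on Cartan components and evaluating the coboundary relation on highest weight vectors) is correct, but — as you yourself note — it is vacuous: the identity $\partial b=\mathrm{infl}(\partial c)$ on dominant weights is automatic, since $\partial c$ is by construction a coboundary over $P$. The reason this data can never suffice is structural: sums of dominant weights project to products in $C=P/Q$ with no ``carrying'', whereas the cocycles $\Phi^c$ are nontrivial precisely on triples where the grading wraps around $C$ (the floor-function terms). So the entire content of the proposition is concentrated in your descent cocycle $D$, and at exactly that point the proposal stops being a proof: ``I would use the relations obtained by letting $\lambda,\mu,\nu$ range over their full $Q$-cosets \dots\ in order to rewrite $D$ as a coboundary'' is a restatement of the goal, not an argument, and the appeal to Proposition~\ref{prop:auto-psi-act-triv} does not help, since that proposition exhibits explicit cochains for an unrelated purpose (triviality of the $\Aut(\Psi)$-action on $H^3(P/Q;\T)$).

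The ``conceptual cross-check'' does not close the gap either. The torsor statement for faithfully graded extensions is proved (by Etingof--Nikshych--Ostrik) for fusion categories, i.e.\ with finitely many simple objects, whereas $\Rep G_q$ is infinite; and, more seriously, the freeness of the $H^3(C;\T)$-action on extensions with fixed lower data \emph{is} the injectivity being proven, so invoking it is essentially circular. (Even granting it, ``fixes that data'' requires knowing that the restriction of the equivalence to the trivially graded component — a twist by an invariant $2$-cocycle on the dual of the adjoint form — is trivializable, which is an additional unproved step.) The paper instead makes the wrap-around channels do the work: it picks $U$ in a generating degree of $C$ such that $\un$ occurs in $U^{\otimes k}$ with multiplicity one, and shows that the scalar $(\iota\otimes f^*)\Phi^j(f\otimes\iota)$ of~\eqref{eq:compos-f-assoc-f-star} equals $\omega_j a_q$ with $a_q\neq 0$ independent of $j$ — the phase ambiguity of $f$ cancels because $f$ and $f^*$ both appear, and the channel $\un\subset U^{\otimes k}$ genuinely crosses the grading, so it reads off $\omega_j$. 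This is carried out case by case (spin representation for B$_n$, defining representation for C$_n$, the $56$- and $27$-dimensional representations for E$_7$, E$_6$, and $U_\pm$, $V$ for D$_{2m}$), and the hardest case, D$_n$ with $n$ odd — where \emph{no} multiplicity-one channel $\un\subset U^{\otimes 4}$ exists and $f$ must be specified through $U_+\circt V\circt U_+$, with nonvanishing of the relevant composition proved by the Clebsch--Gordan computation of Theorem~\ref{thm:spin-2n-n-odd-still-good} — is a difficulty your approach does not register at all.
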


\bp For $G = \SU(n)$, the assertion already follows from the
Kazhdan--Wenzl theorem.  We follow their scheme also for the other
cases, cf.~\cite{MR1237835}*{Section~4}. The basic idea is to extract a numerical invariant of $(\Rep G_q, \Phi^c)$ using morphisms determined up to scalar multiples by the fusion rules and their compositions, in a way such that the ambiguity of the scalar factors is absorbed by the symmetry of the overall formula.

Let us first consider the cases other than the type~D$_{n}$. In these
cases we know that $P/Q$ can be identified with $\Z/k \Z$ for some
$k$.  Thus, $\Rep G$ is graded over $\Z/k \Z$. Moreover, the group
$H^3(P/Q; \T)$ is also cyclic and we may enumerate its elements by the
cocycles
$\Phi^j(a, b, c) = \omega_j^{(\lfloor \frac{a+b}{k} \rfloor - \lfloor
  \frac{a}{k} \rfloor - \lfloor \frac{b}{k}\rfloor) c}$, with
$\omega_j = e^{\frac{2 \pi \sqrt{-1} j}{k}}$ and $j = 0, 1, \ldots, k-1$.

Let us make the convention that for given $U\in \Rep G_q$, $c$, and
$k > 1$, the notation $U^{\otimes k}$ stands for the power of $U$
computed in the category $(\Rep G_q, \Phi^c)$ defined inductively as
$U^{\otimes k} = U \circt U^{\otimes (k-1)}$, so that one has
$U^{\otimes k} = U\circt (U \circt \cdots (U \circt U)\cdots)$. We
want to show that there is a way to choose $U \in \Rep G_q$ in the
homogeneous component of $[1] \in \Z/k\Z$ such that $U^{\otimes k}$
contains a unique copy of~$\un$, and such that for an isometric
morphism $f\colon \un \to U^{\otimes k}$ in $(\Rep G_q, \Phi^c)$,
unique up to a phase factor, the composition
\begin{equation}\label{eq:compos-f-assoc-f-star}
\xymatrix@C=2.5em{
U \simeq \un \circt U \ar[r]^-{f \otimes \iota} & (U \circt U^{\otimes( k-1)}) \circt U \ar[rr]^{\Phi^j_{U,U^{\otimes( k-1)},U}}&& U \circt (U^{\otimes ( k-1)} \circt U) \ar[r]^-{\iota \otimes f^*} & U \circt \un \simeq U
}
\end{equation}
in the category $(\Rep G_q, \Phi^j)$ is equal to
$\omega_j a_q \iota_U$ for some nonzero number $a_q$ which only
depends on~$q$. We note that there is an implicit use of the
associator to go from $U \circt (U^{\otimes ( k-1)} \circt U)$ to
$U \circt U^{\otimes k}$ in order for $\iota \otimes f^*$ to be
applicable. However, since we have $\Phi^j(a, b, c) = 1$ for
$0 \le a, b, a + b < k$, that associator acts trivially. By definition
of $\Phi^j$ we also see that $\Phi^j_{U,U^{\otimes ( k-1)},U}$ simply
contributes by the factor $\omega_j$. By rigidity of $\Rep G_q$ we
have $(\iota\otimes f^*)(f\otimes\iota)\ne0$ in $\Rep G_q$, once the
multiplicity of~$\un$ in $U^{\otimes k}$ equals one. Thus, all we have
to show is that there exists $U \in \Rep G_q$ in the homogeneous
component of $[1] \in \Z/k\Z$ such that $U^{\otimes k}$ contains a
unique copy of~$\un$.

\smallskip

In the following argument we refer to standard texts such
as~\cite{MR1153249} for the details on the representation theory of
simple compact Lie groups.

\smallskip

For $k = 2$, we want to find a self-conjugate irreducible
representation~$U$ in the homogeneous component of $[1] \in \Z/2\Z$ in
$\Rep G$.  For the type B$_n$ ($G = \Spin(2n+1)$) we can take the spin
representation as $U$. For the type C$_n$ ($G = \Sp(n)$), we can take
the defining representation of $\mathfrak{sp}(2n, \C)$ as $U$.  For
the type E$_7$, the unique irreducible representation of
dimension~$56$ is self-conjugate.

\smallskip

For $k = 3$, we only need to consider the type E$_6$ group. There are
two irreducible representations of dimension $27$, $U_\alpha$ and
$U_\beta$, which are conjugate to each other.  Moreover, the
representation $U_\alpha \circt U_\alpha$ contains $U_\beta$ with
multiplicity one (and the complement is spanned by two inequivalent
irreducible representations of dimension $351$). Hence the
multiplicity of $\un$ in $U_\alpha^{\otimes 3}$ equals one.

\smallskip

Now, let us consider the type D$_{n}$ case for $n$ odd.  In this case
$P/Q$ is still cyclic of order $4$, but there is no irreducible
representation $U$ in the homogeneous component of $[1] \in \Z/4\Z$
such that $U^{\otimes 4}$ contains $\un$ with multiplicity
one. However, we will see that there is still a way to specify $f
\colon \un \to U^{\otimes 4}$ only using the fusion rules, such that
the composition~\eqref{eq:compos-f-assoc-f-star} is nonzero.

Let $U_\pm$ denote the half-spin representations. In this case, $U_+$
is in the homogeneous component of $[1] \in \Z/4\Z$.  Since the
intertwiners $\un \to U_+^{\otimes 4}$ are not unique, we make the
following choice. In terms of the defining representation
$V \simeq \C^{2 n}$ (the \emph{vector representation}) of
$\mathfrak{so}(2 n, \C)$, we have a decomposition into irreducible
representations
\begin{equation}\label{eq:n-odd-half-spin-sq-decomp}
U_+ \circt U_+ \simeq U_{++} \oplus \wedge^{n-2} V \oplus \wedge^{n-4} V \oplus \cdots \oplus V
\end{equation}
in $\Rep G_q$, with an additional representation $U_{++}$.
Since the representation ring of $G_q$ is the same as that of $G$, we
continue to denote the corresponding irreducible representations in
$\Rep G_q$ by the same symbols $U_{++}$ and $\wedge^k V$.

Since $V$ is self-conjugate, it follows from \eqref{eq:n-odd-half-spin-sq-decomp}
that the multiplicity of $U_-$ in
$V\circt U_+$ equals one. Consider an isometry $f \colon \un \to
U_+^{\otimes 4}$ obtained as the composition of morphisms $\un \to
U_+\circt U_-\to U_+ \circt V \circt U_+$ and $U_+ \circt V \circt U_+
\to U_+^{\otimes 4}$, the latter induced by $V \to U_+^{\otimes
  2}$. This determines $f$ up to a phase factor. Then
Theorem~\ref{thm:spin-2n-n-odd-still-good} implies that  $(\iota
\otimes f^*) (f \otimes \iota) \neq 0$ in $\Rep G_q$.

\smallskip

It remains to consider the type D$_{n}$ case for $n$ even. The group
$G$ is again $\Spin(2 n)$, but $Z(\Spin(2 n))$ is isomorphic to
$\Z/2\Z \oplus \Z/2\Z$, consisting of $I_{2^n}$ in the Clifford
algebra $M_{2^n}(\C)$, $I_{2^{n-1}} \oplus(-I_{2^{n-1}})$ (the chiral
element), $-I_{2^n}$ (the nontrivial element in the kernel of
$\Spin(2n) \to \SO(2n)$), and $(-I_{2^{n-1}}) \oplus I_{2^{n-1}}$
(their product). In this case, we also know that the representations
$U_{\pm}$ are mutually inequivalent, irreducible, and self-conjugate,
and that the decomposition of $U_+ \circt U_-$ in $\Rep G$ into
irreducible representations is given by
$$
\wedge^{n-1} V \oplus \wedge^{n-3} V \oplus \cdots \oplus V.
$$

As in the odd case, we continue to use the same symbols for representations of $G_q$.
In terms of the grading of $\Rep G_q$ over
$P/Q \simeq \Z/2\Z \oplus \Z/2\Z$, $U_+$ has degree $(1, 0)$ while
$U_-$ has degree $(0, 1)$, and $V$ has $(1, 1)$. Moreover, the
generators of $H^3(P/Q; \T) \simeq (\Z/2)^{3}$ can be represented by
the following cocycles:
\begin{gather*}
\begin{align*}
\phi_1((a, a'), (b, b'), (c, c')) &= (-1)^{a b c},&
\phi_2((a, a'), (b, b'), (c, c')) &= (-1)^{a' b' c'},
\end{align*}\\
\phi_3((a, a'), (b, b'), (c, c')) = (-1)^{a b c'}.
\end{gather*}
Note that the first two are induced by the embeddings
$\Z/2\Z \to P/Q$. In terms of the central characters of $U_{\pm}$ and
$V$, denoting them by $\omega_{\pm}$ and $\omega_V$, we have
\begin{gather*}
\phi_1(\omega_+, \omega_+, \omega_+) = \phi_1(\omega_V, \omega_V, \omega_V) = -1, \quad \phi_1(\omega_-, \omega_-, \omega_-) = 1,\\
\phi_2(\omega_-, \omega_-, \omega_-) = \phi_2(\omega_V, \omega_V, \omega_V) = -1, \quad \phi_2(\omega_+, \omega_+, \omega_+) = 1,\\
\phi_3(\omega_+, \omega_+, \omega_+) = \phi_3(\omega_-, \omega_-, \omega_-) = 1, \quad \phi_3(\omega_V, \omega_V, \omega_V) = -1.
\end{gather*}
Now, take a category of the form $(\Rep G_q, \Phi^c)$.  First, using
each of $U_{\pm}$ as $U$ as in the above argument for with $k = 2$,
one may test whether $\partial c$ contains the classes of $\phi_1$ and
$\phi_2$. Next, using $V$ in that argument, one may test if
$\partial c$ contains the class of $\phi_3$. This way it is possible
to recover the cohomology class of $\partial c$ from the self-duality
morphisms of $U_{\pm}$ and $V$.  \ep

\begin{remark}
 It follows that $[\partial c]^2 \in H^3(P/Q;\T)$ is a cohomological obstruction to the existence of braiding on $(\Rep G_q; \Phi^c)$ for simple $G$, see~\cite{MR3340190}*{Remark~4.4}.
\end{remark}

Finally, let us make the following simple observation.

\begin{proposition} \label{pequalq} If $\Rep G^{c_1}_{q_1}$ and
  $\Rep G^{c_2}_{q_2}$ are unitarily monoidally equivalent for some
  $q_1,q_2>0$ and $\T$-valued $2$-cochains $c_1,c_2$ on $P$ such that
  $\partial c_1,\partial c_2$ descend to $P/Q$, then either
  $q_1=q^{-1}_2$ or $q_1=q_2$.
\end{proposition}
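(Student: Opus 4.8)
The plan is to extract $q$, up to the inversion $q\mapsto q^{-1}$, from an intrinsic numerical invariant of the category, namely its smallest nontrivial quantum dimension. First I would record that the categories $\Rep G^c_q=(\Rep G_q,\Phi^c)$ and $\Rep G_q$ have the same quantum dimension function. Indeed, for irreducible $U$, $V$, $W$ the associator $\Phi^c(U,V,W)$ is a scalar of modulus one, so the solutions of the conjugate equations in $(\Rep G_q,\Phi^c)$ differ from those in $\Rep G_q$ only by phases and hence have the same norms; equivalently, by Lemma~\ref{lem:wor-char-of-g-q-tau} the Woronowicz character of $G^c_q$ agrees with that of $G_q$, so $d(U)=\Tr(\rho_U)$ is unchanged. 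Consequently the quantum dimension of an irreducible $V_\lambda$ in $\Rep G^c_q$ equals the $q$-dimension $\dim_q V_\lambda$ of the corresponding representation of $G_q$, independently of $c$.

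Next, since a unitary monoidal equivalence preserves duals and hence intrinsic dimensions, it carries the set $\{d(X):X\in\Irr\}$ bijectively onto the corresponding set for the other category. I would therefore consider the invariant
\[
m(q)=\min\{\dim_q V_\lambda:\lambda\neq0\},
\]
the smallest quantum dimension exceeding $1$. This minimum is attained, since only finitely many $\lambda$ produce a quantum dimension below any given bound, as is clear from the dimension formula below. An equivalence $\Rep G^{c_1}_{q_1}\simeq\Rep G^{c_2}_{q_2}$ forces $m(q_1)=m(q_2)$, so it remains to show that $m$ determines $q$ up to inversion. Note that using the minimum over \emph{all} irreducibles makes the invariant automatically insensitive to any diagram automorphism relating the two categories, so this route never needs the classification of fusion-ring automorphisms.

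The heart of the matter, and the step I expect to be the main obstacle, is the monotonicity of quantum dimensions in $q$. By the quantum Weyl dimension formula,
\[
\dim_q V_\lambda=\prod_{\alpha>0}\frac{q^{(\lambda+\varrho,\alpha)}-q^{-(\lambda+\varrho,\alpha)}}{q^{(\varrho,\alpha)}-q^{-(\varrho,\alpha)}},
\]
with $\varrho$ half the sum of the positive roots (I suppress the standard $d_\alpha$-normalization in the non-simply-laced case, which does not affect the argument). Writing $q=e^x$ and $\phi(t)=t\coth t$, each factor equals $\sinh((\lambda+\varrho,\alpha)x)/\sinh((\varrho,\alpha)x)$, whose logarithmic derivative in $x$ is $x^{-1}\big(\phi((\lambda+\varrho,\alpha)x)-\phi((\varrho,\alpha)x)\big)$. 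Since $\phi'(t)=(\tfrac12\sinh 2t-t)/\sinh^2 t>0$, the function $\phi$ is strictly increasing, so every factor is nondecreasing in $q$ on $(1,\infty)$, and strictly increasing for those $\alpha$ with $(\lambda,\alpha)>0$. As $\lambda\ne0$ there is at least one such $\alpha$, whence $q\mapsto\dim_q V_\lambda$ is strictly increasing on $[1,\infty)$ and, being visibly invariant under $q\mapsto q^{-1}$, satisfies $\dim_q V_\lambda=\dim_{q^{-1}}V_\lambda$. A minimum of strictly increasing functions is strictly increasing, so $m$ is strictly increasing on $[1,\infty)$ with $m(q)=m(q^{-1})$. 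Therefore $m(q_1)=m(q_2)$ gives $\max(q_1,q_1^{-1})=\max(q_2,q_2^{-1})$, and since the two numbers in each pair multiply to $1$ this yields $\{q_1,q_1^{-1}\}=\{q_2,q_2^{-1}\}$, i.e. $q_1=q_2$ or $q_1=q_2^{-1}$, as claimed.
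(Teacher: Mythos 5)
Your proof is correct, and it takes a genuinely different route from the paper's. The paper first reduces to $q_1,q_2\le 1$ via the isomorphism $G_q\cong G_{q^{-1}}$, then invokes McMullen's theorem (through the same argument as in Theorem~\ref{tautoeq}) to replace $c_1$ by $\sigma(c_1)$ for some $\sigma\in\Aut(\Psi)$ so that the equivalence induces the identity on $R^+(G)$, and only then compares quantum dimensions of \emph{corresponding} nontrivial irreducibles, getting monotonicity from $d(U)=\Tr\bigl(q^{-2\rho^*}|_{H_U}\bigr)$, the symmetry of the spectrum of $\rho^*$, and the strict monotonicity of $q\mapsto q+q^{-1}$ on $(0,1]$. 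You sidestep the fusion-ring step entirely: since a unitary monoidal equivalence preserves intrinsic dimensions and carries nontrivial simples to nontrivial simples, the single number $m(q)=\min_{\lambda\neq 0}\dim_q V_\lambda$ is an invariant of the category, so no identification of individual irreducibles (hence no appeal to $\Aut(\Psi)\cong\Aut(R^+(G))$) is ever needed. The analytic core is the same in both arguments, namely strict monotonicity of $q\mapsto\dim_q V_\lambda$ on $[1,\infty)$ together with invariance under $q\mapsto q^{-1}$: the paper derives it from the trace of the Woronowicz character, you from the quantum Weyl dimension formula and monotonicity of $t\coth t$; these are essentially the same computation. One step of yours deserves emphasis: ``a minimum of strictly increasing functions is strictly increasing'' is false for infima in general, and is valid here only because the minimum is attained (if $q<q'$ and $m(q')=\dim_{q'}V_{\lambda'}$, then $m(q)\le\dim_q V_{\lambda'}<m(q')$) --- you did address this, and attainment can be seen most quickly from $\dim_q V_\lambda\ge\dim V_\lambda$, obtained by pairing the $\pm t$ eigenvalues of $\rho^*$. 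What your approach buys is a shorter, more self-contained proof, independent of McMullen's classification; what the paper's buys is that its reduction via $\Aut(\Psi)$ and matched irreducibles is exactly the pattern reused in Theorem~\ref{tautoeq} and Theorem~\ref{thm:simple-G-isom-class}, so it costs nothing extra in context.
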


\bp It is well-known that there is an isomorphism between $G_q$ and $G_{q^{-1}}$ which maps the maximal torus into the maximal torus. It follows that $G^c_q\cong G^{c'}_{q^{-1}}$ for some $c'$. Therefore without loss of generality we may assume that $q_1,q_2\le1$. Next, a monoidal equivalence between $\Rep G^{c_1}_{q_1}$ and $\Rep G^{c_2}_{q_2}$ defines an automorphism of the representation semiring $R^+(G)$, so by the same argument as in the proof of Theorem~\ref{tautoeq} we conclude that for some $\sigma\in\Aut(\Psi)$, $\Rep G^{\sigma(c_1)}_{q_1}$ and $\Rep G^{c_2}_{q_2}$ are monoidally equivalent via an equivalence which defines the identity map on $R^+(G)$.

Now, the proposition follows by observing that the quantum dimension on $(\Rep G_q,\Phi^c)$ is independent of $c$, and if $U_1$ is any nontrivial irreducible representation of $G_{q_1}$ and $U_2$ is the corresponding representation of $G_{q_2}$ (via the identification of the representation rings), then $\dim_{q_1}U_1>\dim_{q_2}U_2$ if $q_1<q_2\le1$. Indeed, as we already reminded before Lemma~\ref{lem:wor-char-of-g-q-tau}, the quantum dimension on $\Rep G_q$ is defined by the trace of $q^{-2\rho^*}$, where $\rho^*\in\mathfrak h$ is the element corresponding to half the sum of positive roots. The element $\rho^*$ acts in every representation of $G$ by an operator with symmetric spectrum, since the longest element in the Weyl group maps $\rho^*$ into $-\rho^*$. Therefore our claim follows from the fact that the function $q\mapsto q+q^{-1}$ is strictly decreasing on the interval $(0,1]$.
\ep

\subsection{Isomorphisms}

Let us summarize what kind of isomorphisms between the quantum groups~$G^c_q$ we have for $q\le1$. First, if $b$ is a $2$-cochain defined on $P/Q$, the quantum group $G^{c b}_q$ is \emph{by definition} the same as $G^c_q$.  Second, if we replace $c$ by $c \partial e$ for some $1$-cochain $e$ on $P$, we have the isomorphism $G_q^c \simeq G^{c \partial e}_q$ implemented on the level of $\U(G_q)$ by conjugation by $e$, which can be regarded as a unitary in $\U(T)$.  Finally, any $\sigma \in \Aut(\Psi)$ induces an automorphism of $\U(G_q)$, which in turn induces an isomorphism $G^c_q\cong G^{\sigma(c)}_q$.

The following theorem states that for $q<1$ and simply connected simple groups this exhausts all the possibilities.

\begin{theorem}\label{thm:simple-G-isom-class}
Let $G$ be a simply connected simple compact Lie group, $q_1,q_2\in(0,1)$, and $c_1,c_2$ be $\T$-valued $2$-cochains  on $P$ such that $\partial c_1,\partial c_2$ descend to $P/Q$. Then the quantum groups~$G^{c_1}_{q_1}$ and~$G^{c_2}_{q_2}$ are isomorphic if and only if $q_1=q_2$ and there exist an element $\sigma \in \Aut(\Psi)$ and a $\T$-valued $2$-cochain $b$ on $P/Q$ such that $c_1 \sigma(c_2)^{-1}b^{-1}$ is a coboundary on $P$.
\end{theorem}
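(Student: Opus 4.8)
The plan is to treat the two implications separately, with nearly all the work in the ``only if'' direction. For ``if'', I would chain together the three elementary isomorphisms recorded at the start of this subsection. Assume $q_1=q_2=:q$ and $c_1\sigma(c_2)^{-1}b^{-1}=\partial e$ for some $\sigma\in\Aut(\Psi)$, some $\T$-valued $2$-cochain $b$ on $P/Q$, and some $\T$-valued $1$-cochain $e$ on $P$. Since $\T$-valued cochains commute, this reads $c_1=\sigma(c_2)\,b\,\partial e$. Discarding the factor $b$ living on $P/Q$ leaves the quantum group unchanged, conjugation by the unitary $e\in\U(T)$ gives $G^{\sigma(c_2)\partial e}_q\cong G^{\sigma(c_2)}_q$, and $\sigma^{-1}\in\Aut(\Psi)$ gives $G^{\sigma(c_2)}_q\cong G^{c_2}_q$; composing yields $G^{c_1}_q\cong G^{c_2}_q$.

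For ``only if'', suppose $G^{c_1}_{q_1}\cong G^{c_2}_{q_2}$. Since $q_1,q_2\in(0,1)$, Proposition~\ref{pequalq} forces $q_1=q_2=:q$. Exactly as in the proof of that proposition, the underlying monoidal equivalence induces an automorphism of $R^+(G)$, i.e.\ an element of $\Aut(\Psi)$; composing with the relabelling isomorphism $G^{c_2}_q\cong G^{\sigma(c_2)}_q$ for a suitable $\sigma\in\Aut(\Psi)$, I may assume that the monoidal part of the resulting isomorphism $G^{c_1}_q\cong G^{\sigma(c_2)}_q$ fixes every isomorphism class of objects. By the argument in the proof of Theorem~\ref{tautoeq}, such an equivalence shows that $\Phi^{c_1}$ and $\Phi^{\sigma(c_2)}$ are cohomologous in $H^3_{G_q}(\hat G_q;\T)$; and here is where simplicity of $G$ is used, for Proposition~\ref{prop:simple-G-3-cohom-inj} then descends this to $[\partial c_1]=[\partial\sigma(c_2)]$ in $H^3(P/Q;\T)$. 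Hence there is a $\T$-valued $2$-cochain $b$ on $P/Q$ with $\partial c_1=\partial d$, where $d:=\sigma(c_2)b$. Now $G^d_q=G^{\sigma(c_2)}_q\cong G^{c_2}_q$, the associators $\Phi^{c_1}=\Phi^{d}$ coincide on the nose so that $\Rep G^{c_1}_q$ and $\Rep G^{d}_q$ are literally the same category, and $c':=c_1 d^{-1}$ is a genuine $\T$-valued $2$-cocycle on $\hat T=P$.

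It remains to pin down $c'$, and this is where the fiber-functor half of the isomorphism enters and where I expect the real difficulty to lie. From $c_1=d\,c'$ we get $G^{c_1}_q=(G^{d}_q)_{c'}$, with $\Rep G^{c_1}_q=\Rep G^{d}_q$ and canonical fiber functors related by $F^{G^{c_1}_q}=(F^{G^{d}_q})_{c'}$. The isomorphism $G^{c_1}_q\cong G^{d}_q$ then consists of an object-preserving monoidal self-equivalence $F$ of this common category and a natural unitary monoidal isomorphism $F^{G^{c_1}_q}\circ F\cong F^{G^{d}_q}$. Read through the right action of $\Aut^\otimes(\Rep G^{d}_q)$ on isomorphism classes of dimension-preserving fiber functors, together with the fact that the object-preserving autoequivalences are precisely the subgroup $H^2_{G^{d}_q}(\hat G^{d}_q;\T)$, this relation says that the class of $c'$ lies in the orbit of the base point, hence in the image of the natural map $H^2_{G^{d}_q}(\hat G^{d}_q;\T)\to H^2(\hat G^{d}_q;\T)$. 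Translating via the bijection $H^2(\hat T;\T)\cong H^2(\hat G^{d}_q;\T)$ of Corollary~\ref{ccocycleclass} and the isomorphism $H^2_{G^{d}_q}(\hat G^{d}_q;\T)\cong H^2(P/Q;\T)$ of Proposition~\ref{pinvcoh} — these being compatible because all the maps in sight are induced by the inclusions $\U(Z(G))\subset\U(T)\subset\U(G^{d}_q)$ — places $[c']\in H^2(P;\T)$ in the image of the inflation map $H^2(P/Q;\T)\to H^2(P;\T)$. Thus $c'=\beta\,\partial e$ for a $2$-cocycle $\beta$ inflated from $P/Q$ and a $1$-cochain $e$ on $P$; absorbing $\beta$ into $b$ (so that $b\beta$ is again a $2$-cochain on $P/Q$) and recalling $d=\sigma(c_2)b$ gives $c_1\,\sigma(c_2)^{-1}(b\beta)^{-1}=\partial e$, the desired coboundary. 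The genuinely delicate point is this last compatibility of the three cohomological identifications, ensuring that ``the fiber functors agree as fiber functors of quantum groups'' translates faithfully into ``$[c']$ is inflated from $P/Q$ modulo a coboundary''; by contrast the inputs from Propositions~\ref{pequalq} and~\ref{prop:simple-G-3-cohom-inj} are used as black boxes.
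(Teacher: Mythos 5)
Your proof is correct and follows essentially the same route as the paper's: Proposition~\ref{pequalq} to equate the parameters, $\Aut(\Psi)$ to reduce to an object-preserving equivalence, Proposition~\ref{prop:simple-G-3-cohom-inj} to match $\partial c_1$ and $\partial c_2$ up to a coboundary on $P/Q$, Proposition~\ref{pinvcoh} to identify the residual autoequivalence with a $2$-cocycle on $P/Q$, and Corollary~\ref{ccocycleclass} to conclude that the remaining cocycle on $P$ is a coboundary. The only difference is bookkeeping: the paper absorbs each correction into successive replacements of $c_2$, whereas you phrase the last step via the orbit of $[c']$ under the $H^2_{G^d_q}(\hat G^d_q;\T)$-action and the compatibility of the cohomology maps induced by $\U(Z(G))\subset\U(T)\subset\U(G^d_q)$ --- a compatibility which does hold, for exactly the reason you give.
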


\bp
The ``if'' part is clear from the discussion preceding the theorem. In order to prove the ``only if'' part let us assume that $G^{c_1}_{q_1}\cong G^{c_2}_{q_2}$. By Proposition~\ref{pequalq} we have $q_1=q_2=q$. Next, the isomorphism defines an automorphism of the representation semiring of $G$, and consequently an element $\sigma^{-1}\in \Aut(\Psi)$. Since $G^{c_2}_q\cong G^{\sigma(c_2)}_q$, replacing $c_2$ by $\sigma(c_2)$ we may assume that $\sigma$ is trivial. Then the isomorphism $G^{c_1}_{q}\cong G^{c_2}_{q}$ gives us a unitary monoidal equivalence between $(\Rep G_q,\Phi^{c_1})$ and $(\Rep G_q,\Phi^{c_2})$ which preserves the isomorphism classes of objects of $\Rep G_q$. Hence $\Phi^{c_1}$ and $\Phi^{c_2}$ define the same class in $H^3_{G_q}(\hat G_q;\T)$. By Proposition~\ref{prop:simple-G-3-cohom-inj} we conclude that the $3$-cocycles $\partial c_1$ and $\partial c_2$ on $P/Q$ are cohomologous. Let $b$ be a $2$-cochain on $P/Q$ satisfying $(\partial c_1)( \partial c_2)^{-1} = \partial b$. Replacing $c_2$ by $c_2 b$ (which does not change $G^{c_2}_q$), we may further assume that $\partial c_1=\partial c_2$.

Let $F\colon\Rep G^{c_1}_q\to\Hilb_f$ be the unitary fiber functor corresponding to the dual cocycle $c_2c_1^{-1}$ on the maximal torus $T\subset G^{c_1}_q$.  Then $F$ defines the quantum group $G_q^{c_2}$, and our isomorphism $G_q^{c_1} \cong G_q^{c_2}$ defines an autoequivalence $E$ of $\Rep G^{c_1}_q$, which maps an object to an isomorphic object, such that $FE$ is naturally unitarily monoidally isomorphic to the canonical fiber functor $\Rep G^{c_1}_q\to\Hilb_f$. Then, Proposition~\ref{pinvcoh} implies that $E$ is naturally unitarily monoidally isomorphic to the functor defined by a $2$-cocycle $b'$ on $P/Q$ (trivial in all except the type D$_{2n}$ case). Replacing once again~$c_2$ by $c_2b'$ we may assume that $b'=1$. Then $F$ is naturally unitarily monoidally isomorphic to the canonical fiber functor $\Rep G^{c_1}_q\to\Hilb_f$. It follows that $c_2c_1^{-1}$, considered as a dual cocycle on~$G^{c_1}_q$, is a coboundary. Finally, by Corollary~\ref{ccocycleclass} we conclude that the cocycle $c_2c_1^{-1}$ on $P = \hat{T}$ is a coboundary.
\ep

\begin{remark} The theorem is not true for $q=1$, since as we already observed in Remark~\ref{rq1failure}, the map $H^2(\hat T;\T)\to H^2(\hat G;\T)$ is not injective.
\end{remark}

\bigskip

\section{Classification of non-Kac compact quantum groups of \texorpdfstring{$\SU(n)$}{SU(n)}-type up to isomorphism}
\label{sec:class-non-kac-su-n}

For $G=\SU(n)$ the results of the previous section can be further strengthened thanks to a classification theorem of Kazhdan and Wenzl~\cite{MR1237835}. It states that any semisimple rigid monoidal category with fusion rules of~$\SL(n)$ must be equivalent to one of the categories $(\Rep \SL_q(n), \Phi^c)$, where $q$ is not a nontrivial root of unity and $c$ is a $2$-cochain as in the previous section. The corresponding result for C$^*$-tensor categories states that any rigid C$^*$-tensor category with fusion rules of $\SU(n)$ is unitarily monoidally equivalent to $(\Rep \SU_q(n),\Phi^c)$ for some $q\in(0,1]$ and $c$, see~\cite{MR3266525} for details, as well as~\citelist{\cite{MR2307417}\cite{MR2825504}*{Section~7}} for related slightly weaker results.

\smallskip

Let us say that a compact quantum group $G$ is of $\SU(n)$-type if there is a dimension-preserving isomorphism of the representation semirings $R^+(G)\cong R^+(\SU(n))$. Then the fact that any rigid C$^*$-tensor category with fusion rules of $\SU(n)$ is unitarily monoidally equivalent to $\Rep \SU^c_q(n)$,  together with Corollary~\ref{ccocycleclass} implying that the class of quantum groups $\SU^c_q(n)$ is closed under cocycle twisting for $q\ne1$, show that any non-Kac compact quantum group of $\SU(n)$-type is isomorphic to $\SU^c_q(n)$. Combined with Theorem~\ref{thm:simple-G-isom-class} this gives a complete classification of such compact quantum groups up to isomorphism,  answering, in the non-Kac case, the question of Woronowicz raised at the end of~\cite{MR943923}.

To formulate the result more precisely, recall that for $\SU(n)$ the group $\Aut(\Psi)$ contains one nontrivial automorphism $\theta$. Its action on $\U(\SU_q(n))$, which already appeared in the proof of Proposition~\ref{prop:auto-psi-act-triv}, is characterized by $E_i\mapsto E_{n-i}$, $1\le i\le n-1$.  Restricted to the maximal torus $T$, this action can be expressed as
\begin{equation}\label{eq:theta-action}
\operatorname{diag}(t_1,\dots,t_n)\mapsto\operatorname{diag}(t_n^{-1},\dots,t_1^{-1}).
\end{equation}
Our main classification result can now be formulated as follows.

\begin{theorem}
\label{thm:answer-Woronowicz-non-Kac}
Let $G$ be a non-Kac compact quantum group of $\SU(n)$-type for some $n\ge2$. Then $G\cong\SU^c_q(n)$ for some $q\in(0,1)$ and a $\T$-valued $2$-cochain $c$ on $\hat{T}\cong\Z^{n-1}$ such that $\partial c$ descends to a $3$-cocycle on $\widehat{Z(\SU(n))}\cong\Z/n\Z$. Furthermore, two such quantum groups $\SU^{c_1}_{q_1}(n)$ and $\SU^{c_2}_{q_2}(n)$ are isomorphic if and only if $q_1=q_2$ and there exists a $\T$-valued $2$-cochain $b$ on $\widehat{Z(\SU(n))}$ such that either $c_1c_2^{-1}b^{-1}$ or $c_1\theta(c_2)^{-1}b^{-1}$ is a coboundary on $\hat T$.
\end{theorem}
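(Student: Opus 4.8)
The statement splits into an existence claim---that every non-Kac compact quantum group of $\SU(n)$-type takes the form $\SU^c_q(n)$ with $q\in(0,1)$---and an isomorphism criterion. The plan is to derive the first from the C$^*$-Kazhdan--Wenzl classification together with Corollary~\ref{ccocycleclass}, and the second by specializing Theorem~\ref{thm:simple-G-isom-class} to $G=\SU(n)$; with both ingredients in place, the proof is mostly a matter of assembling them.

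For existence, let $G$ be non-Kac of $\SU(n)$-type. By the C$^*$-version of the Kazhdan--Wenzl theorem recalled above, $\Rep G$ is unitarily monoidally equivalent to $(\Rep\SU_q(n),\Phi^c)=\Rep\SU^c_q(n)$ for some $q\in(0,1]$ and a $2$-cochain $c$. I would first rule out $q=1$: the quantum dimension of $(\Rep\SU_q(n),\Phi^c)$ is intrinsic to the category and agrees with that of $\Rep\SU_q(n)$, since $\Phi^c$ acts by scalars on triples of irreducibles; hence it coincides with the classical dimension exactly when $q=1$, and as $G$ is non-Kac this forces $q\in(0,1)$. Transporting the canonical fiber functor of $G$ across the equivalence yields a unitary fiber functor on $\Rep\SU^c_q(n)$, dimension-preserving because the classical dimension on $R^+(\SU(n))$ is invariant under $\Aut(\Psi)$. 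By Corollary~\ref{ccocycleclass} it corresponds to a class in $H^2(\hat T;\T)$, i.e.\ to a $2$-cocycle $c'$ on $\hat T=P$; since twisting $\SU^c_q(n)$ by $c'$ yields $\SU^{cc'}_q(n)$, Woronowicz's Tannaka--Krein duality gives $G\cong\SU^{cc'}_q(n)$. Renaming $cc'$ as $c$ and observing that $\partial(cc')=\partial c$ still descends to $P/Q$ completes this half.

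For the isomorphism criterion I would invoke Theorem~\ref{thm:simple-G-isom-class} with $G=\SU(n)$, which gives $\SU^{c_1}_{q_1}(n)\cong\SU^{c_2}_{q_2}(n)$ if and only if $q_1=q_2$ and $c_1\sigma(c_2)^{-1}b^{-1}$ is a coboundary on $P$ for some $\sigma\in\Aut(\Psi)$ and some $\T$-valued $2$-cochain $b$ on $P/Q=\widehat{Z(\SU(n))}$. It then remains to identify $\Aut(\Psi)$ for type $A_{n-1}$ as $\{e,\theta\}$, with $\theta$ restricting to $T$ by~\eqref{eq:theta-action}; the two choices of $\sigma$ produce precisely the stated dichotomy, namely that either $c_1c_2^{-1}b^{-1}$ or $c_1\theta(c_2)^{-1}b^{-1}$ be a coboundary on $\hat T$.

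Since the substance lies in the cited theorems, I expect the only delicate points to be a pair of verifications. In the existence step one must check that the transported fiber functor is dimension-preserving in the sense Corollary~\ref{ccocycleclass} requires; this is exactly the $\Aut(\Psi)$-invariance of the classical dimension, which holds because the nontrivial symmetry $\theta$ carries each irreducible to its conjugate. In the isomorphism criterion one should note the degenerate case $n=2$, where $\theta$ of~\eqref{eq:theta-action} restricts to the identity on $T\subset\SU(2)$: then $\theta(c_2)=c_2$, the two alternatives coincide, and the statement holds with its ``or'' clause redundant.
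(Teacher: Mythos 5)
Your proposal is correct and takes essentially the same route as the paper, whose own argument is precisely the assembly you describe: the C$^*$-Kazhdan--Wenzl theorem gives $\Rep G\simeq\Rep\SU^c_q(n)$, Corollary~\ref{ccocycleclass} turns the transported canonical fiber functor into a twist by a cocycle $c'$ on $\hat T$ so that $G\cong\SU^{cc'}_q(n)$, and the isomorphism criterion is Theorem~\ref{thm:simple-G-isom-class} specialized to type $\mathrm{A}_{n-1}$ with $\Aut(\Psi)=\{e,\theta\}$. The verifications you single out --- ruling out $q=1$ via the intrinsic dimension, dimension-preservation of the transported functor via McMullen's theorem and $\Aut(\Psi)$-invariance of the classical dimension, and the degenerate case $n=2$ where $\theta$ acts trivially on $T$ --- are exactly the details the paper leaves implicit, and you handle them correctly.
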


Let us now find an explicit parameter set for the quantum groups $G$ of $\SU(n)$-type. Such a set can be obtained by the following procedure. Choose cochains $c_1,\dots,c_n$ such that $\partial c_i$ exhaust the group $H^3(\widehat{Z(\SU(n))};\T)\cong\Z/n\Z$.  Then $G\cong\SU^{c_k\omega}_q(n)$ for some $q\in(0,1)$, $1\le k\le n$ and a skew-symmetric bicharacter $\omega$ on $\hat T$. The numbers $q$ and $k$ are uniquely determined, and the above theorem tells us what the ambiguity in the choice of $\omega$ is.

In order to present $c_k$ and $\omega$ concretely, it is convenient to enlarge the maximal torus $T \simeq \T^{n-1}$ of $\SU(n)$ to that of $\rU(n)$, that is, to the group $\tilde T\cong\T^n$ of diagonal unitary matrices. Lifting $2$-cocycles on $\hat T$ to the group $\hat{\tilde T}\cong\Z^n$ with the base $(L_i)_{i=1}^n$ dual to the basis $(E_{ii})^n_{i=1}$ of the Lie algebra of diagonal matrices, we conclude that up to coboundaries such cocycles are represented by skew-symmetric bicharacters $\omega\colon \Z^n \times \Z^n \to \T$ which satisfy $\omega(L_1 + \cdots + L_n, x) = 1$ for any $x \in \Z^n$.  Putting $\omega_{i j} = \omega(L_i, L_j)\in\T$, the matrix $(\omega_{i j})_{i, j = 1}^n$ satisfies $\omega_{ii}=1$, $\omega_{j i} = \bar{\omega}_{i j}$ and $\prod_i \omega_{i j} = 1$ for any~$j$. Two such matrices $\omega=(\omega_{ij})_{i,j}$ and $\tilde\omega=(\tilde\omega_{ij})_{i,j}$ represent the same element of $H^2(\hat T;\T)$ if and only if $\omega^2_{ij}=\tilde\omega^2_{ij}$ for all~$i,j$.

As we already mentioned in the proof of Theorem~\ref{thm:can-Kac-quot-G-q-tau}, explicit examples of cochains $c$ were constructed in~\cite{MR3340190}. Namely, every $(n-1)$-tuple $\tau=(\tau_1,\dots,\tau_{n-1})$ of roots of unity of order $n$ defines a cochain $c_\tau$ such that
$$
c_\tau(\lambda,\mu+\alpha_i)=c_\tau(\lambda,\mu)\ \ \hbox{and}\ \  c_{\tau}(\lambda+\alpha_i,\mu)=\tau_i^{-|\mu|}
c_\tau(\lambda,\mu),
$$
where $\alpha_i=L_i-L_{i+1}$ are the simple roots and $|\cdot|\colon P\to\Z$ is defined by $|L_1|=n-1$, $|L_i|=-1$ for $2\le i\le n$. Such a cochain is not uniquely defined, but the ambiguity only contributes to a cochain on $P/Q$ and the corresponding quantum group $\SU^{\tau}_q(n) = \SU^{c_\tau}_q(n)$ is independent of any choices. Moreover, by~\cite{MR3340190}*{Proposition~4.1} we have $[\partial{c_\tau}]=[\partial{c_{\nu}}]$ in $H^3(P/Q;\T)$ if and only if
\begin{equation*}\label{eq:tau-i-nu-i-same-class}
\prod_{i=1}^{n-1}\tau_i^i=\prod^{n-1}_{i=1}\nu^i_i.
\end{equation*}
Therefore the required representatives $c_k$ can be obtained by taking, for example,
$$
\tau^{(k)} =(e^{2\pi (k-1) i/n},1,\dots,1).
$$

We next have to deform $\SU^{c_\tau}_q(n)$ by a skew-symmetric bicharacter $\omega$ on $\hat T$ to get a quantum group $\SU^{c_\tau\omega}_q(n)$, which we will also denote by $\SU^{\tau,\omega}_q(n)$. In terms of the quantum groups $\SU^{\tau,\omega}_q(n)$ Theorem~\ref{thm:answer-Woronowicz-non-Kac} can be reformulated as follows.

\begin{theorem} \label{thm:answer-Woronowicz-non-Kac2}
Let $G$ be a non-Kac compact quantum group of $\SU(n)$-type for some $n\ge2$. Then $G\cong\SU^{\tau,\omega}_q(n)$ for some $q\in(0,1)$, $\tau$ and $\omega$ as above. Furthermore, two such quantum groups $\SU^{\tau,\omega}_q(n)$ and $\SU^{\tau',\omega'}_{q'}(n)$ are isomorphic if and only if $q=q'$, $\prod^{n-1}_{i=1}\tau^i_i=\prod^{n-1}_{i=1}{\tau'_i}^{i}$ and one of the following holds for $\omega_{i j} = \omega(L_i, L_j)$ and $\omega'_{i j} = \omega'(L_i, L_j)$:
\begin{itemize}
\item[{\rm (i)}] $\displaystyle \omega_{ij}^2\prod^{j-1}_{k=i}\tau_k= {\omega'}^2_{ij}\prod^{j-1}_{k=i}\tau'_k$ for all $1\le i<j\le n-1$;

\item[{\rm (ii)}] $\displaystyle \omega_{ij}^2\prod^{j-1}_{k=i}\tau_k= {\omega'}^2_{n-i+1,n-j+1}\prod^{j-1}_{k=i}\bar\tau'_{n-k}$ for all $1\le i<j\le n-1$.
\end{itemize}
\end{theorem}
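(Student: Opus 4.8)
The plan is to deduce both assertions from Theorem~\ref{thm:answer-Woronowicz-non-Kac} by translating its cohomological criterion into the explicit bicharacter data. For the existence statement I would start from the isomorphism $G\cong\SU^c_q(n)$ with $q\in(0,1)$ provided by Theorem~\ref{thm:answer-Woronowicz-non-Kac} and invoke the parametrization discussed just before that theorem: choosing $\tau$ with $[\partial c]=[\partial c_\tau]$ in $H^3(\widehat{Z(\SU(n))};\T)$, the cochain $cc_\tau^{-1}$ has coboundary trivial in $H^3(P/Q;\T)$, so after multiplying by a suitable $2$-cochain $b$ on $P/Q$ (which does not change the quantum group) it becomes a genuine $2$-cocycle on $\hat T$, hence cohomologous to a skew-symmetric bicharacter $\omega$. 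Since twisting by a coboundary on $\hat T$ only produces an isomorphic quantum group, this yields $G\cong\SU^c_q(n)\cong\SU^{c_\tau\omega}_q(n)=\SU^{\tau,\omega}_q(n)$.

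For the isomorphism criterion I would substitute $c_1=c_\tau\omega$ and $c_2=c_{\tau'}\omega'$ into the two alternatives of Theorem~\ref{thm:answer-Woronowicz-non-Kac} and analyse them separately, the equality $q=q'$ being inherited directly. In the first case $c_1c_2^{-1}b^{-1}=c_\tau c_{\tau'}^{-1}\,\omega\omega'^{-1}\,b^{-1}$ is required to be a coboundary on $\hat T$. Taking coboundaries forces $[\partial c_\tau]=[\partial c_{\tau'}]$ in $H^3(P/Q;\T)$, which by~\cite{MR3340190}*{Proposition~4.1} is exactly $\prod_{i}\tau_i^i=\prod_i{\tau'_i}^i$; once this holds one may solve $\partial b=\partial(c_\tau c_{\tau'}^{-1})$ on $P/Q$, so that $c_\tau c_{\tau'}^{-1}b^{-1}$ is a $2$-cocycle on $\hat T$. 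The condition then says that the product of this cocycle with $\omega\omega'^{-1}$ has trivial class in $H^2(\hat T;\T)$.

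Since a class in $H^2(\hat T;\T)$ is detected by the antisymmetric part $\E(x,y)\E(y,x)^{-1}$ of a representing cocycle $\E$ — which is precisely why the bicharacters $\omega,\omega'$ enter only through $\omega_{ij}^2,{\omega'}^2_{ij}$ — the heart of the proof is the explicit computation of the antisymmetrization of $c_\tau c_{\tau'}^{-1}b^{-1}$. Using the defining relations $c_\tau(\lambda,\mu+\alpha_i)=c_\tau(\lambda,\mu)$ and $c_\tau(\lambda+\alpha_i,\mu)=\tau_i^{-|\mu|}c_\tau(\lambda,\mu)$ together with the lift to $\hat{\tilde T}=\Z^n$, I would evaluate this antisymmetric part on the pairs $(L_i,L_j)$ and show it equals $\prod_{k=i}^{j-1}\tau_k\bar\tau'_k$. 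Combining with the class $\omega_{ij}^2\bar{\omega'}^2_{ij}$ of $\omega\omega'^{-1}$ and setting the total equal to $1$ yields precisely condition~(i).

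Finally, the second alternative $c_1\theta(c_2)^{-1}b^{-1}$ is treated identically after computing the effect of $\theta$. By~\eqref{eq:theta-action} the automorphism $\theta$ sends $L_i\mapsto-L_{n-i+1}$ on $\hat{\tilde T}$, so $\theta(\omega')$ has $(i,j)$-entry ${\omega'}^2_{n-i+1,n-j+1}$ and $\theta(c_{\tau'})$ contributes $\prod_{k=i}^{j-1}\bar\tau'_{n-k}$ to the antisymmetrization; since $\theta$ acts trivially on $H^3(P/Q;\T)$ (as established in the proof of Proposition~\ref{prop:auto-psi-act-triv}) the $H^3$-condition is again $\prod_i\tau_i^i=\prod_i{\tau'_i}^i$, and the $H^2$-condition becomes~(ii). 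I expect the genuine obstacle to be the antisymmetrization computation of the third paragraph: the cochains $c_\tau$ are not cocycles individually, so one must keep careful track of the normalization ambiguity on $P/Q$, of the index-reversal and conjugations introduced by $\theta$, and of the passage between $\hat T=P$ and its enlargement $\hat{\tilde T}=\Z^n$, in order to obtain the clean half-open products $\prod_{k=i}^{j-1}$ appearing in~(i) and~(ii).
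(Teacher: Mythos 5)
Your proposal is correct and follows essentially the same route as the paper's proof: reduce everything to Theorem~\ref{thm:answer-Woronowicz-non-Kac}, extract the $H^3(P/Q;\T)$ condition $\prod_i\tau_i^i=\prod_i{\tau'_i}^i$ via \cite{MR3340190}*{Proposition~4.1}, and convert the residual $H^2(\hat T;\T)$ condition into (i)/(ii) by antisymmetrization of bicharacters, with the second case handled through the action~\eqref{eq:theta-action} of $\theta$, i.e., $\tilde\tau'_i=\bar\tau'_{n-i}$ and $\tilde\omega'(L_i,L_j)=\omega'(L_{n-i+1},L_{n-j+1})$. The two points you flag at the end are resolved in the paper exactly as you anticipate: the ambiguity in the choice of $b$ is eliminated by the triviality of $H^2(P/Q;\T)$ (so $b$ is unique up to a coboundary on $P/Q$), and the antisymmetrization is computed by exhibiting the explicit bicharacter $f(L_i,L_j)=\prod_{k=i}^{n-1}\tau_k\bar\tau'_k$ on $P$, which has the same transformation rules as $c_\tau c_{\tau'}^{-1}$ (so that $c_\tau c_{\tau'}^{-1}f^{-1}$ descends to $P/Q$) and whose antisymmetric part is precisely your predicted value $\prod_{k=i}^{j-1}\tau_k\bar\tau'_k$.
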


\bp We only have to show that the condition that there exists a cochain $b$ on $P/Q$ such that $c_\tau \omega (c_{\tau'}\omega')^{-1}b^{-1}$ (resp., $c_\tau \omega \theta(c_{\tau'}\omega')^{-1}b^{-1}$) is a coboundary, is equivalent to $\prod^{n-1}_{i=1}\tau^i_i=\prod^{n-1}_{i=1}{\tau'_i}^{i}$ and condition (i) (resp., condition (ii)).

Consider the first case. Let us first of all observe that the condition on the existence of $b$ is equivalent to saying that the $3$-cocycles $\partial(c_\tau\omega)$ and $\partial(c_{\tau'}\omega')$ on $P/Q$ are cohomologous and if
\begin{equation} \label{ecohom}
\partial(c_\tau\omega)\partial(c_{\tau'}\omega')^{-1}=\partial b
\end{equation}
holds for a $2$-cochain $b$ on $P/Q$, then $c_\tau \omega (c_{\tau'}\omega')^{-1}b^{-1}$ is a coboundary on $P$. This is an immediate consequence of the triviality of the group $H^2(P/Q;\T)$, as it implies that the cochain $b$ satisfying~\eqref{ecohom} is uniquely determined up to a coboundary on $P/Q$.

Now, as we already stated before the theorem, the condition that $\partial(c_\tau\omega)$ and $\partial(c_{\tau'}\omega')$ are cohomologous is equivalent to $\prod^{n-1}_{i=1}\tau^i_i=\prod^{n-1}_{i=1}{\tau'_i}^{i}$, since $\omega$ and $\omega'$ are cocycles. Once this condition is satisfied, we can define a bicharacter~$f$ on~$P$ by
$$
f(L_i,L_j)=\prod^{n-1}_{k=i}\tau_k\bar\tau'_k.
$$
It has the property that
\begin{align*}
f(\lambda,\mu+\alpha_i)&=f(\lambda,\mu),&
f(\lambda+\alpha_i,\mu)&=\tau_i^{-|\mu|}{\tau'_i}^{|\mu|} f(\lambda,\mu).
\end{align*}
It follows that $c_\tau c_{\tau'}^{-1}=f b$ for a $2$-cochain  $b$ on $P/Q$. Since $f$, $\omega$ and $\omega'$ are $2$-cocycles, $b$ satisfies~\eqref{ecohom}. Therefore the condition that $c_\tau \omega (c_{\tau'}\omega')^{-1}b^{-1}$ is a coboundary means that the cocycle $f\omega{\omega'}^{-1}$ is a coboundary. This is equivalent to saying that this cocycle is symmetric, and since it is a bicharacter and the bicharacters $\omega$ and $\omega'$ are skew-symmetric, this is equivalent to
$$
f(L_i,L_j)\omega({L_i,L_j})^2=f(L_j,L_i)\omega'(L_i,L_j)^2
\ \ \text{for}\ \ 1\le i<j\le n-1.
$$
This is exactly condition (i) in the formulation of the theorem.

Turning to the second case, we see from~\eqref{eq:theta-action} that $\theta(c_{\tau'}\omega')$ is equal to $c_{\tilde\tau'} \tilde{\omega}'$, given by $\tilde\tau'_i = \bar\tau'_{n-i}$ and $\tilde{\omega}'(L_i, L_j) = \omega'(L_{n-i+1},L_{n-j+1})$. We then proceed as in the first case.
\ep

Finally, let us present explicit generators and relations of $\C[\SU^{\tau,\omega}_q(n)]$.  The algebra $\C[\SU_q^\tau(n)]$ is generated by the matrix coefficients $v_{ij}$, $1\le i,j\le n$, of the canonical $n$-dimensional representation. Then $\C[\SU_q^{\tau,\omega}(n)]=\C[\SU_q^\tau(n)]$ as coalgebras, while the new product~$\cdot_\omega$ is determined by the following rule: if $x,y\in\C[\SU^\tau_q(n)]$ are such that
$$
(\pi\otimes\iota\otimes\pi)\Delta^{(2)}(x)=z_i\otimes x\otimes z_j\ \ \text{and}\ \ (\pi\otimes\iota\otimes\pi)\Delta^{(2)}(y)=z_k\otimes x\otimes z_l,
$$
where $\pi\colon\C[\SU^\tau_q(n)]\to\C[T]$ is the restriction map and we write $t=\operatorname{diag}(z_1(t),\dots,z_n(t))$ for elements $t\in T$, so that $\pi(v_{ij})=\delta_{ij}z_i$, then
$$
x\cdot_\omega y=\omega_{ik}\bar\omega_{jl}xy.
$$
From the relations in $\C[\SU^\tau_q(n)]$ given in~\cite{MR3340190}*{Section~4.3} we conclude that $\C[\SU^{\tau,\omega}_q]$ can be described as the universal algebra with generators $v_{ij}$, $1\le i,j\le n$, and relations
\begin{gather*}
\label{eq:SUqtN-rel-1}
  v_{i j} v_{i l} = \Bigl ( \prod_{j \le p < l} \tau_p^{-1} \Bigr ) q \bar\omega_{jl}^2 v_{i l} v_{i j} \quad (j < l), \quad  v_{i j} v_{k j} = \Bigl ( \prod_{i \le p < k} \tau_p \Bigr ) q
  {\omega}_{i k}^2v_{k j} v_{i j} \quad (i < k),\\
\label{eq:SUqtN-rel-2}
  v_{i j} v_{k l} = \Bigl ( \prod_{k\le p < i} \tau_p^{-1} \Bigr ) \Bigl ( \prod_{j \le p < l} \tau_p^{-1} \Bigr ) {\omega}_{i k}^2 \bar\omega_{j l}^2 v_{k l} v_{i j} \quad (i > k, j < l),\\
\label{eq:SUqtN-rel-3}
  \Bigl ( \prod_{j \le p < l} \tau_p \Bigr ) \omega_{jl}^2 v_{i j} v_{k l} - \Bigl ( \prod_{i \le p < k} \tau_p \Bigr ) \bar{\omega}_{ki}^2 v_{k l} v_{i j} = (q - q^{-1}) v_{i l} v_{k j} \quad (i < k, j < l),
  \end{gather*}
and
\[
\label{eq:SUqtN-rel-4}
  \sum_{\sigma \in S_n} \tau^{m(\sigma)} (-q)^{\absv{\sigma}} \bar{\omega}(1, \ldots, n) \omega(\sigma(1), \ldots, \sigma(n)) v_{1 \sigma(1)} \cdots v_{n \sigma(n)} = 1,
\]
where $m(\sigma)=(m(\sigma)_1,\dots,m(\sigma)_{n-1})$ is the multi-index given by $m(\sigma)_i = \sum_{k=2}^n (k - 1) m^{(k, \sigma(k))}_i$, with
$$
m^{(k,j)}_i=\begin{cases}1,&\text{if}\ \ k\le i<j,\\
-1,&\text{if}\ \ j\le i<k,\\
0,&\text{otherwise},\end{cases}
$$
and the function $\omega(i_1, \ldots, i_n)$ is defined by $\prod_{k < l} \omega_{i_k, i_l}$. The $*$-structure is uniquely determined by requiring the invertible matrix $(v_{ij})_{i,j}$ to be unitary.

\bigskip

\appendix
\section{Clebsch--Gordan coefficients of
  \texorpdfstring{$\Spin(2 n)$}{Spin(2n)}} \label{appendix}

Throughout this appendix $n\ge3$ is an odd integer. We review an explicit
decomposition of tensor powers of the half-spin representations of
$\Spin(2 n)$ (cf.~\cite{MR1153249}), and its extension to
$\Spin_q(2 n)$. Our goal is to prove the following theorem.

\begin{theorem}\label{thm:spin-2n-n-odd-still-good}
Let $q > 0$, and $f \colon \un \to U_+^{\otimes 4}$ be the unique
up to a phase factor isometric embedding of $\un$ into $U_+^{\otimes 4}$ in
$\Rep \Spin_q(2 n)$ which factors through
$U_+ \otimes V \otimes U_+$.  Then one has
$(\iota \otimes f^*) (f \otimes \iota) \neq 0$.
\end{theorem}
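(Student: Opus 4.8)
The plan is to reduce the statement to a single numerical quantity and then to evaluate it in an explicit model of the spin representations. Since $U_+$ is irreducible, $(\iota\otimes f^*)(f\otimes\iota)$ is a scalar $\lambda_q$ times $\iota_{U_+}$. Fixing an orthonormal basis $\{\xi_i\}$ of $H_{U_+}$ and writing $f(1)=\sum_{i,j,k,l}F_{ijkl}\,\xi_i\otimes\xi_j\otimes\xi_k\otimes\xi_l$, a direct unwinding of the two tensor legs shows that $\lambda_q\dim H_{U_+}=\sum_{i,j,k,l}F_{ijkl}\,\overline{F_{jkli}}$, the cyclic contraction of the Clebsch--Gordan tensor of $f$ with its conjugate. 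Thus everything reduces to computing the coefficients $F_{ijkl}$ explicitly and checking that this contraction is nonzero for all $q>0$; note that, unlike for a partial trace of $ff^*$, there is no positivity to exploit here, so a genuine computation seems unavoidable.

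To obtain the $F_{ijkl}$ I would use the factorization built into the definition of $f$, namely $f=(\iota\otimes h\otimes\iota)\,g$, where $h\colon V\to U_+\circt U_+$ is the multiplicity-one embedding onto the $V$-summand of \eqref{eq:n-odd-half-spin-sq-decomp} and $g\colon\un\to U_+\circt V\circt U_+$ is the composition of the duality morphism $\bar R_{U_+}\colon\un\to U_+\circt U_-$ with the multiplicity-one embedding $s\colon U_-\to V\circt U_+$. Each of $\bar R_{U_+}$, $s$ and $h$ is determined up to a scalar by the fusion rules, so the content of the computation is to write all three in a common model. I would realize $U_\pm$ on the even and odd parts of the exterior algebra $\wedge^{\bullet}\C^n$, with $V=\C^n\oplus(\C^n)^*$ acting by the $q$-deformed Clifford creation and annihilation operators; in this model $h$ and $s$ are given by Clifford multiplication and $\bar R_{U_+}$ by the canonical pairing, and every coefficient becomes an explicit expression in $q$ built from the $q$-integers $[m]_q=(q^m-q^{-m})/(q-q^{-1})$.

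The remaining step is then bookkeeping: substitute the coefficients into the cyclic contraction, use that the associativity isomorphisms of $\Rep\Spin_q(2n)$ act by computable scalars on the irreducible summands arising in the reassembly of $f=(\iota\otimes h\otimes\iota)g$, and collect terms. I expect $\lambda_q$ to come out as an explicit expression in the $[m]_q$ and powers of $q$ that is visibly nonzero on $(0,\infty)$ and reduces at $q=1$ to the classical value; this manifest nonvanishing is what promotes the conclusion from generic $q$ to all $q>0$, where a bare analyticity-in-$q$ argument would only rule out isolated exceptional values. As a preliminary check one can run the whole computation at $q=1$, where the Clifford maps and the decomposition \eqref{eq:n-odd-half-spin-sq-decomp} are the classical ones, confirming $\lambda_1\neq0$ directly.

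The main obstacle will be the model-building step: setting up the $q$-deformed Clifford algebra and its spinor module so that $\bar R_{U_+}$, $s$ and $h$ are realized simultaneously with correctly matched normalizations, and verifying \eqref{eq:n-odd-half-spin-sq-decomp} at the level of explicit $q$-deformed weight vectors. Tracking the nonstrict associators through the contraction is the other delicate point, but since we work in the untwisted category $\Rep\Spin_q(2n)$ these are the standard structure morphisms and contribute only computable scalar factors on the relevant irreducible components.
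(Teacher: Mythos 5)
Your reduction of the problem to the non-vanishing of the cyclic contraction $\sum_{i,j,k,l}F_{ijkl}\overline{F_{jkli}}$ is correct, and your setup (exterior-algebra spinor model, the factorization $f=(\iota\otimes h_V\otimes\iota)g$ through the multiplicity-one embeddings) is the same starting point as the paper's. But the proposal stops exactly where the difficulty begins: you acknowledge yourself that there is no positivity to exploit, and then the plan is to compute all the Clebsch--Gordan coefficients and ``expect'' the contraction to come out visibly nonzero. That expectation is the entire content of the theorem. Nothing in your plan controls the signs of the very many terms in the contraction (the coefficients of the embedding $V\to U_+\circt U_+$ carry alternating signs of the form $(-q)^{I}$), so nothing rules out cancellation; an explicit-formula strategy only succeeds if one actually produces the formula, and for this tensor that is a substantially harder computation than anything in the paper. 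So this is a genuine gap, not a deferred routine verification.

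The idea you are missing is a structural trick that collapses the computation. Since $\un$ has multiplicity one in $U_+\circt V\circt U_+$, the morphism $f$ can be rewritten in two ways: as $\lambda(\iota\otimes h)\bar R_+$, where $(R_+,\bar R_+)$ solves the conjugate equations for $U_+$ (with $\bar U_+\simeq U_-$ for odd $n$) and $h\colon U_-\to U_+^{\otimes 3}$ is the embedding factoring through $V\circt U_+$; and as $\mu(g\otimes\iota)R_+$, where $g\colon U_-\to U_+^{\otimes 3}$ factors through $U_+\circt V$. Substituting the first expression into $f\otimes\iota$ and the second into $\iota\otimes f^*$, and using that $g^*h\in\End(U_-)$ is a scalar, the conjugate equations give $(\iota\otimes f^*)(f\otimes\iota)=\lambda\bar\mu\,(g^*h)\,\iota_{U_+}$. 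Thus the rank-four contraction is replaced by a single inner product of two explicit vectors in $U_+^{\otimes 3}$, evaluated on a lowest weight vector of $U_-$, and in that inner product every contributing term equals $(-1)^{\frac{n+1}{2}}q^{m}$ for some integer $m$: all terms carry the same sign for every $q>0$, so no cancellation can occur. This sign coherence is precisely what yields the conclusion for all $q>0$ rather than for generic $q$, and it is the mechanism your proposal lacks. (A minor additional point: in $\Rep\Spin_q(2n)$, realized concretely on Hilbert spaces, the associativity morphisms are the canonical identifications, so there are no associator scalars to track at all.)
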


\subsection{Classical case}
\label{sec:classical-case}

Let us first consider the classical case $q = 1$.
%As soon as one
%establishes this, by the analyticity in $q$ one also has the theorem
%for generic $q$.
Let $V$ be a complex vector space of dimension $2 n$ endowed with a
nondegenerate quadratic form $Q(\xi)$. The associated symmetric
bilinear form is denoted by
$(\xi, \eta)_Q = \frac{1}{2}(Q(\xi + \eta)-Q(\xi)-Q(\eta))$, so that
one has $Q(\xi) = (\xi, \xi)_Q$. The vector space $\wedge^2 V$ becomes
a Lie algebra with Lie bracket
$$
[\xi \wedge \eta, \xi' \wedge \eta'] = (\eta, \eta')_Q \xi \wedge \xi'
- (\eta, \xi')_Q \xi \wedge \eta' + (\xi, \xi')_Q \eta\wedge \eta' -
(\xi, \eta')_Q\eta\wedge\xi',
$$
which is isomorphic to $\mathfrak{so}(2 n, \C)$. It acts on $V$ by
$(\xi \wedge \eta) . \zeta = - (\eta, \zeta)_Q \xi + (\xi, \zeta)_Q
\eta$.
We denote the coproduct on the universal enveloping algebra by
$\Dhat$.

Let $W$ be a maximal isotropic subspace of $V$. We identify its dual
$W^*$ with a subspace of $V$ by means of~$Q$. Let $e_1, \ldots, e_n$
be a basis of $W$, and $(e_{n+1}, \ldots, e_{2 n})$ be the dual basis
in $W^* \subset V$. We represent the linear operators on $V$ using the
matrix units $E_{i, j}$ with respect to $(e_1, \ldots, e_{2 n})$. For
example, $e_{n+2} \wedge e_1$ is represented by
$E_{1, 2} - E_{n+2, n+1}$.

For a Cartan subalgebra $\mathfrak{h}$ of $\wedge^2 V$, we take the
linear span of the elements $e_{n+i} \wedge e_i = E_{i, i} - E_{n+i, n+i}$ for
$i = 1, \ldots, n$, and let $(L_i)_{i=1}^n \subset \mathfrak{h}^*$ be
the dual weights, so $(L_i, e_{n+j}\wedge e_j) = \delta_{i, j}$. A
standard choice of simple positive roots is $\alpha_i=L_i-L_{i+1}$ for $i=1,\dots,n-1$ and $\alpha_n=L_{n-1}+L_n$. The corresponding generators of the Lie algebra are $X_i = e_{n+i+1}\wedge e_i = E_{i, i+1} - E_{n+i+1, n+i}$ for
$i=1, \ldots n-1$ and $X_n = e_n \wedge e_{n-1} = E_{n-1,2n} -
E_{n,2n-1}$, $Y_i = e_{n+i}\wedge e_{i+1}=E_{i+1,i}-E_{n+i,n+i+1}$ for $i=1, \ldots, n-1$ and
$Y_n = e_{2 n} \wedge e_{2n-1}=E_{2n-1,n}-E_{2n,n-1}$. We note that $V$ has the highest weight
$L_1$, with the highest weight vector $e_1$.

The complex Clifford algebra $\Cliff(Q)$ associated with $Q$ is
generated by $c(\xi)$ for $\xi \in V$, subject to the relations
$$
c(\xi) c(\eta) + c(\eta) c(\xi) = -2(\xi, \eta)_Q = Q(\xi) + Q(\eta)
- Q(\xi + \eta).
$$
Considering $\Cliff(Q)$ as a Lie algebra with the commutator bracket, we have a Lie algebra embedding
$\wedge^2 V \to \Cliff(Q)$ given by
$\xi \wedge \eta \mapsto \frac{1}{4}[c(\xi), c(\eta)] = \frac{1}{2}
c(\xi) c(\eta) + \frac{1}{2} (\xi, \eta)_Q$.
Then the action of $\xi \wedge \eta \in \wedge^2 V$ on $V$ can be
identified with the adjoint action of $\frac{1}{4}[c(\xi), c(\eta)]$
on $c(\zeta)$ for $\zeta \in V$. In the following we put
$c(e_i) = c_i$ for $i = 1, \ldots, 2n$.

Put $S = \wedge^* W$, $U_+ = \wedge^{\odd} W$, and
$U_- = \wedge^{\even} W$. We have a representation of $\Cliff(Q)$ on
$S$ given by $c_i w = \sqrt{2} e_i \wedge w$ and
$c_{n+i} w = - \sqrt{2} e_i \lefthalfcup w$ for $i = 1,
\ldots,n$.
Then, the induced action of $\wedge^2 V$ preserves both $U_+$ and
$U_-$. If $X = \{x_1 < x_2 < \cdots < x_k \}$ is a subset of $\{1, \ldots, n\}$, we let~$e_X$ denote the vector $e_{x_1} \wedge \cdots \wedge
e_{x_k}$. Then $e_{n+i} \wedge e_i$ acts as the multiplication by
$-\frac{1}{2}$ on $e_X$ if $i$ does
not belong to $X$, and by $\frac{1}{2}$ if it does. In other words,
\begin{equation}\label{eweight}
\text{the vector } e_X\text{ has weight }\
\frac{1}{2}\sum_{i\in X}L_i-\frac{1}{2}\sum_{i\notin X}L_i.
\end{equation}
The representation $U_+$ has the highest weight
$\frac{1}{2}(L_1 + \cdots + L_n)$, while $U_-$ has the lowest weight
$-\frac{1}{2}(L_1+\cdots+L_n)$ (they are conjugate to each other). One
sees that the corresponding highest/lowest weight vectors are given by
$e_{\{1,\ldots,n\}}=e_1 \wedge \cdots \wedge e_n \in \wedge^n W
\subset U_+$ and $e_\emptyset=1 \in \wedge^0 W \subset U_-$.

Now, put $v_i = \frac{\sqrt{-1}}{\sqrt{2}}(e_i - e_{n+i})$ and
$v_{n+i} = \frac{1}{\sqrt{2}}(e_i + e_{n+i})$ for $i = 1, \ldots, n$,
and denote their span over $\R$ by $V_0$.  Then $Q$ restricts to a
positive definite bilinear form on $V_0$, with the orthonormal basis
$v_1, \ldots, v_{2n}$. Thus, $\wedge^2 V_0$ is a real Lie subalgebra
of $\wedge^2 V$ which is isomorphic to $\mathfrak{so}(2n,
\R)$.
Moreover, if we define a Hermitian inner product on $S$ so that the
vectors $e_X$ ($X \subset \{1, \ldots, n\}$) form an orthonormal
basis, the elements $c(v_i)$ ($i=1,\ldots,2n$) act as skew-adjoint
operators. Consequently, our model of $\Spin(2n)$ is the closed
connected subgroup of $\mathrm{GL}_1(\Cliff(Q))$ with Lie algebra
$\wedge^2 V_0$, and the half-spin representations are $U_\pm$ with the
orthonormal bases consisting of vectors~$e_X$ with odd/even~$|X|$. Note also that that we can define a $\wedge^2 V_0$-invariant Hermitian inner product on~$V$ by letting
$(e_i)_{i=1}^{2n}$ to be an orthonormal basis.

\smallskip

In order to find a copy of $V$ in $U_+ \otimes U_+$ (see the
decomposition~\eqref{eq:n-odd-half-spin-sq-decomp}), we need to find a
highest weight vector of weight $L_1$. For this purpose, given a
subset $X \subset \{1, \ldots, n\}$, let us define
$$
I_1(X)=\sum_{k\in X}k-|X|n\ \ \text{and}\ \ \sigma_1(X)=(-1)^{I_1(X)}.
$$
Let us also consider the set $\Omega_1$ of pairs of sets $(X',X'')$ such that $X'\cup X''=\{1,\dots,n\}$, $X'\cap X''=\{1\}$ and $|X'|$ is odd. Then, we claim that
\begin{equation*}
  \tilde{e}_1 = \sum_{(X',X'')\in\Omega_1}\sigma_1(X') e_{X'} \otimes e_{X''}
\end{equation*}
is a highest weight vector of weight $L_1$ in $U_+ \otimes
U_+$. That the weight is correct follows immediately from~\eqref{eweight}.
To see that $\tilde{e}_1$ belongs to
the kernel of the operators $X_i$, we compute $\Dhat(X_i)\tilde e_1$ as follows.
If $i=1$, then we clearly get zero, since $e_1\wedge e_X=0$ for any $X$ containing $1$.
Consider now $2\le i\le n-1$. The action of
$\Dhat(e_{n+i+1} \wedge e_i)$ is given by the sum of the actions by $-e_{i+1} \lefthalfcup e_i \wedge$ on the tensor components, so the image
of $\tilde{e}_1$ can be expressed as
$$
\sum_{\substack{(X',X'')\in\Omega_1\\ i\notin X',\ i+1\in X'}} \sigma_1(X') e_{X'\cup\{i\}\setminus\{i+1\}}\otimes e_{X''} + \sum_{\substack{(Y',Y'')\in\Omega_1\\ i\in Y',\ i+1\notin Y'}} \sigma_1(Y')e_{Y'}\otimes e_{Y''\cup\{i\}\setminus\{i+1\}}.
$$
We see that $X'$ in the first sum and $Y'=X'\cup\{i\}\setminus\{i+1\}$ in the second contribute with the same terms up to a sign. Since $I_1(X')=I_1(Y')+1$, these terms in fact cancel. Thus,
$\Dhat(e_{n+i} \wedge e_i) \tilde{e}_1 = 0$. As for the remaining element
$X_n = e_n \wedge e_{n-1}$, we compute
$\Dhat(X_n) \tilde{e}_1$ as
$$
-\sum_{\substack{(X',X'')\in\Omega_1\\ n-1,n\notin X'}} \sigma_1(X') e_{X'\cup\{n-1,n\}}\otimes e_{X''} - \sum_{\substack{(Y',Y'')\in\Omega_1\\ n-1,n\in Y'}} \sigma_1(Y')e_{Y'}\otimes e_{Y''\cup\{n-1,n\}}.
$$
This time, the term in the first sum corresponding to some $X'$ cancels the term in the second sum corresponding to $Y'=X'\cup\{n-1,n\}$, since $I_1(X')=I_1(Y')+1$.

Now, let us find the images $\tilde e_i$ of $e_i$ under the intertwiner
$V \to U_+ \otimes U_+$ mapping $e_1$ to $\tilde{e}_1$. This can be done by observing that $e_{i+1}=Y_ie_i$ and $e_{n+i}=-Y_ie_{n+i+1}$ for $1\le i\le n-1$, and $e_{2n}=-Y_ne_{n-1}$. In order to formulate the result, we need to introduce more notation. For $X\subset\{1,\dots,n\}$ and $1\le i\le n$ put
\begin{align*}
I_i(X)&=I_1(X)+|\{k\in X\mid k< i\}|-(i-1), & \sigma_i(X)&=(-1)^{I_i(X)},\\
I_{n+i}(X)&=I_1(X)+|\{k\in X\mid k<i\}|-(n-1), & \sigma_{n+i}(X)&=(-1)^{I_{n+i}(X)},
\end{align*}
and
\begin{align*}
\Omega_i&=\{(X',X'')\mid X'\cup X''=\{1,\dots,n\},\ X'\cap X''=\{i\},\ |X'|\text{ is odd}\},\\
\Omega_{n+i}&=\{(X',X'')\mid X'\cup X''=\{1,\dots,n\}\setminus\{i\},\ X'\cap X''=\emptyset,\ |X'|\text{ is odd}\}.
\end{align*}
Then
\begin{align}\label{eq:V-hwv-in-S-plus-squared}
\tilde{e}_i &= \sum_{(X',X'')\in\Omega_i}\sigma_i(X') e_{X'} \otimes e_{X''}, &
\tilde{e}_{n+i} &= (-1)^i\sum_{(X',X'')\in\Omega_{n+i}}\sigma_{n+i}(X') e_{X'} \otimes e_{X''}.
\end{align}
Let us check, for example, that $\tilde e_{2n}=-\Dhat(Y_n)\tilde e_{n-1}$. As $Y_n=e_{2n}\wedge e_{2n-1}$, the vector $-\Dhat(Y_n)\tilde e_{n-1}$ equals
$$
-\sum_{\substack{(X',X'')\in\Omega_{n-1}\\ n\in X'}} \sigma_{n-1}(X') e_{X'\setminus\{n-1,n\}}\otimes e_{X''} - \sum_{\substack{(Y',Y'')\in\Omega_{n-1}\\ n\notin Y'}} \sigma_{n-1}(Y')e_{Y'}\otimes e_{Y''\setminus\{n-1,n\}}.
$$
This expression equals $\tilde e_{2n}$, since for any $X'$ in the first sum we have $I_{n-1}(X')=I_{2n}(X'\setminus\{n-1,n\})$ and for any $Y'$ in the second sum we have $I_{n-1}(Y')=I_{2n}(Y')$.

\smallskip

Next, since $V$ is irreducible and self-conjugate, $U_+ \otimes V$
contains $U_-$ with multiplicity one. In order to find this inclusion
we need to find a lowest weight vector of weight
$-\frac{1}{2}(L_1+\cdots + L_n)$. It is given~by
\begin{equation} \label{eq:S-minus-in-S-plus-times-V}
e_1 \otimes e_{n+1} + \cdots + e_n \otimes e_{2n}.
\end{equation}
Again, the weight is correct by \eqref{eweight}, since the weight of $e_{n+i}\in V$ is $-L_i$. To see that we indeed get a lowest weight vector, we can compute the action of $Y_i=e_{n+i}\wedge e_{i+1}$ as
$$
- e_i \lefthalfcup e_{i+1} \wedge e_i \otimes e_{n+i} + e_{i+1} \otimes
(E_{i+1,i}-E_{n+i,n+i+1}) e_{n+i+1} = e_{i+1} \otimes e_{n+i} - e_{i+1} \otimes
e_{n+i} = 0,
$$
while the action of $Y_n=e_{2 n} \wedge e_{2 n - 1}$ clearly gives zero, since $e_i\in U_+$ and $e_{n+i}\in V$ are both annihilated by $Y_n$.

It follows that
the unique up to a scalar factor inclusion $g \colon U_- \to U_+^{\otimes 3}$ which factors through
$U_+ \otimes V$ is characterized by
$1 \mapsto \sum_{i=1}^n e_i \otimes \tilde{e}_{n+i}$. Similarly, we
can define another inclusion $h\colon U_- \to U_+^{\otimes 3}$ which
factors through $V \otimes U_+$, by
$1 \mapsto \sum_{i=1}^n \tilde{e}_{n+i} \otimes e_i$.

\begin{proposition}\label{prop:g-star-h-compos}
We have $g^* h = (-1)^{\frac{n+1}{2}} n (n-1)$.
\end{proposition}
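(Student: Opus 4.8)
The plan is to exploit irreducibility: since $U_-$ is irreducible and both $g$ and $h$ are morphisms $\un \mapsto U_-\to U_+^{\otimes 3}$, the composition $g^*h$ is an endomorphism of $U_-$, hence a scalar by Schur's lemma. To pin down the scalar I would evaluate on the lowest weight vector $e_\emptyset=1\in\wedge^0 W\subset U_-$, which is a unit basis vector. Writing $g^*h(1)=\lambda\cdot 1$ and taking the inner product with $1$ gives, by adjointness, $\lambda=\langle g^*h(1),1\rangle=\langle h(1),g(1)\rangle$, the inner product being taken in $U_+^{\otimes 3}$. Thus the whole computation reduces to a single inner product of the two explicit vectors $g(1)=\sum_i e_{\{i\}}\otimes\tilde e_{n+i}$ and $h(1)=\sum_j\tilde e_{n+j}\otimes e_{\{j\}}$.

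Next I would expand both vectors in the orthonormal basis $e_A\otimes e_B\otimes e_C$ (with $A,B,C\subset\{1,\dots,n\}$ of odd cardinality) using formula~\eqref{eq:V-hwv-in-S-plus-squared} for $\tilde e_{n+i}$, and match terms. A term $e_A\otimes e_B\otimes e_C$ occurring in $g(1)$ comes from the choice $A=\{i\}$ with $(B,C)\in\Omega_{n+i}$, while in $h(1)$ it comes from $C=\{j\}$ with $(A,B)\in\Omega_{n+j}$. Comparing the constraints $A\cup B=\{1,\dots,n\}\setminus\{i\}$ and $B\cup C=\{1,\dots,n\}\setminus\{j\}$ forces $i\neq j$, $A=\{i\}$, $C=\{j\}$, and $B=\{1,\dots,n\}\setminus\{i,j\}$. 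One checks $|B|=n-2$ is odd precisely because $n$ is odd (the standing hypothesis of the appendix), so exactly one surviving term is indexed by each ordered pair $(i,j)$ with $i\neq j$, of which there are $n(n-1)$.

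The only real work is the sign bookkeeping. For the surviving pair $(i,j)$ the contribution is $(-1)^{i+j}\,\sigma_{n+i}(\{1,\dots,n\}\setminus\{i,j\})\,\sigma_{n+j}(\{i\})$. Plugging in the definitions of $I_{n+i}$ and $\sigma_{n+i}$, a direct computation gives $I_{n+j}(\{i\})\equiv i+1+\mathbf 1_{\{i<j\}}$ and $I_{n+i}(\{1,\dots,n\}\setminus\{i,j\})\equiv \tfrac{n(3-n)}{2}+j+\mathbf 1_{\{j<i\}}\pmod 2$. Adding the four exponents, the terms $2i+2j$ vanish mod $2$ and the indicators combine via $\mathbf 1_{\{i<j\}}+\mathbf 1_{\{j<i\}}=1$ (as $i\neq j$), so every contribution equals the constant $(-1)^{n(3-n)/2}$, independent of $(i,j)$. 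Since $n$ is odd, $\tfrac{n(3-n)}{2}\equiv\tfrac{n+1}{2}\pmod 2$, whence each term is $(-1)^{(n+1)/2}$. Summing over the $n(n-1)$ pairs yields $g^*h=(-1)^{(n+1)/2}n(n-1)$, as claimed. I expect the verification that the $(i,j)$-dependence cancels modulo $2$ to be the main obstacle, as it requires carefully tracking all the $\sigma$-signs and the relative position indicators, but it is a finite and entirely mechanical check.
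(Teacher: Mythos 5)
Your proposal is correct and follows essentially the same route as the paper's proof: reduce $g^*h$ to the inner product $\langle h(1),g(1)\rangle$ of the two explicit lowest weight vectors in $U_+^{\otimes 3}$, observe that the only matching basis terms are $e_{\{i\}}\otimes e_{\{1,\dots,n\}\setminus\{i,j\}}\otimes e_{\{j\}}$ for ordered pairs $i\neq j$, and then check that each such term contributes $(-1)^{(n+1)/2}$. Your sign bookkeeping (uniform mod-$2$ computation with indicator functions, rather than the paper's case split into $i>j$ and $i<j$) is a cosmetic variation, and all your congruences, including $\tfrac{n(3-n)}{2}\equiv\tfrac{n+1}{2}\pmod 2$ for odd $n$, are accurate.
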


\bp We just need to compute the inner product of the corresponding
lowest weight vectors $\sum_{i=1}^n e_i \otimes \tilde{e}_{n+i}$ and
$\sum_{i=1}^n \tilde{e}_{n+i} \otimes e_i$ inside $U_+^{\otimes
  3}$.
Putting $X_{i,j} = \{1,\ldots,n\}\setminus\{i,j\}$, we see that the
only contributing terms are
$$
\sum_{i \neq j} (-1)^{i}\sigma_{n+i}(X_{i,j}) e_i \otimes e_{X_{i,j}}
\otimes e_j
$$
from $\sum_{i=1}^n e_i\otimes \tilde{e}_{n+i}$, and
$$
\sum_{i \neq j} (-1)^j\sigma_{n+j}(\{i\}) e_i \otimes e_{X_{i,j}} \otimes
e_j
$$
from $\sum_{i=1}^n \tilde{e}_{n+j} \otimes e_j$.
If $i > j$, we have $I_{n+i}(\{j\})=-2n+j+2$ and $$I_{n+i}(X_{i,j})=-\frac{n(n-3)}{2}-j-1,$$
so that $\sigma_{n+i}(\{j\})=(-1)^j$ and $\sigma_{n+i}(X_{i,j})=(-1)^{\frac{n+1}{2}+j+1}$. On the other hand,
if $i < j$, then $I_{n+i}(\{j\})=-2n+j+1$ and $I_{n+i}(X_{i,j})=-\frac{n(n-3)}{2}-j,$
so that $\sigma_{n+i}(\{j\})=(-1)^{j+1}$ and $\sigma_{n+i}(X_{i,j})=(-1)^{\frac{n+1}{2}+j}$. Thus in any case
we have
\begin{equation}
  \label{eq:inn-prod-lwv}
 (-1)^{i+j} ( \sigma_{n+i}(X_{i,j}) e_i \otimes e_{X_{i,j}} \otimes e_j,
\sigma_{n+j}(\{i\}) e_i \otimes e_{X_{i,j}} \otimes e_j) =
(-1)^{\frac{n+1}{2}},
\end{equation}
and the assertion follows.
\ep

\bp[Proof of Theorem~\ref{thm:spin-2n-n-odd-still-good} for $q=1$] By the
uniqueness of the embedding of $\un$ inside $U_+ \otimes V \otimes U_+$, we
may also regard $f$ either as the composition of
$\bar{R}_+\colon \un \to U_+ \otimes U_-$ and
$\lambda \iota \otimes h$, or as the composition of
$R_+\colon \un \to U_- \otimes U_+$ and $\mu g \otimes \iota$, for
some $\lambda, \mu \in \C^\times$. Then the conjugate equations and the
previous proposition imply the assertion.  \ep

\smallskip

\subsection{Quantum case}
\label{sec:quantum-case}

Next let us move on to the quantum case with $q > 0$, $q\ne1$.  We follow the
conventions of~\cite{neshveyev-tuset-book} for the quantized
universal enveloping algebra $U_q(\mathfrak{so}_{2n})$: it is
generated by elements $E_1, \ldots, E_n$, $F_1, \ldots, F_n$, and $K_1^{\pm 1}, \ldots,
K_n^{\pm 1}$ subject to a standard set of relations based on the root
datum.  For our purpose, it is convenient to consider the elements $X_i =
K_i^{-\half} E_i$ and $Y_i = X_i^* = F_i K_i^{\half}$, so that one has
\begin{align}\label{eq:q-coprod-X-Y}
  \Dhat_q(X_i) &= X_i \otimes K_i^{-\half} + K_i^{\half} \otimes X_i,& \Dhat_q(Y_i) &= Y_i \otimes K_i^{-\half} + K_i^{\half} \otimes Y_i.
\end{align}

Keeping the Hermitian forms on $V$ and $U_\pm$, we have unitary representations of~$U_q(\so_{2n})$ on these spaces given
by the same formulas for the elements $X_i$ and $Y_i$ as in the case $q=1$, see~\cite{MR1068378}. To see this, consider
$H_i = [X_i, Y_i]$ computed in $\so_{2n}$, so that we have
$H_i = e_{n+i} \wedge e_i - e_{n+i+1} \wedge e_{i+1}$ for $i < n$ and
$H_n = e_n \wedge e_{2n} + e_{n-1} \wedge e_{2n-1}$.  We then observe
that since the weights appearing in $V$ are of the form $\pm L_i$, the
elements $H_i$ act with eigenvalues $0, \pm 1$, and therefore
$$
\frac{q^{H_i} - q^{-H_i}}{q - q^{-1}}=H_i\ \ \text{on}\ \ V.
$$
We let $K_i$ act on $V$ as $q^{H_i}$. The quantum Serre relations are trivially satisfied, e.g.,~because $X_i^2=0$ on~$V$ and therefore the quantum Serre relations for the elements $X_i$ are equivalent to the classical ones. Similar arguments apply to $U_+$ and $U_-$, since by \eqref{eweight} the elements $H_i$ again act with eigenvalues $0, \pm 1$.

We thus need to find  intertwiners $V \to U_+ \otimes U_+$
and $U_- \to U_+ \otimes V$ with respect to the new
coproduct~\eqref{eq:q-coprod-X-Y} for the same representations of
$X_i$ and $Y_i$ on $U_\pm$ and $V$ as for $q=1$. The formulas are similar to \eqref{eq:V-hwv-in-S-plus-squared} and \eqref{eq:S-minus-in-S-plus-times-V} but involve extra factors of $q$. More precisely, put $\sigma^q_i(X)=(-q)^{I_i(X)}$ for $1\le i\le 2n$. Then replacing $\sigma_i(X)$ by $\sigma^q_i(X)$ in~\eqref{eq:V-hwv-in-S-plus-squared}  we get vectors $\tilde e_i^q\in U_+\otimes U_+$ defining an embedding $V\to U_+\otimes U_+$ which maps $e_i\in V$ into $\tilde e^q_i$. This is proved similarly to the case $q=1$. For example, we compute $\Dhat_q(X_i)\tilde e^q_1$ for $2\le i\le n-1$ as
$$
\sum_{\substack{(X',X'')\in\Omega_1\\ i\notin X',\ i+1\in X'}} q^{-\half}\sigma^q_1(X') e_{X'\cup\{i\}\setminus\{i+1\}}\otimes e_{X''} + \sum_{\substack{(Y',Y'')\in\Omega_1\\ i\in Y',\ i+1\notin Y'}} q^{\half}\sigma^q_1(Y')e_{Y'}\otimes e_{Y''\cup\{i\}\setminus\{i+1\}}.
$$
As in the case $q=1$, if we take some $X'$ in the first sum and then consider $Y'=X'\cup\{i\}\setminus\{i+1\}$ in the second, then the corresponding terms cancel, since $I_1(X')=I_1(Y')+1$. Thus $\Dhat_q(X_i)\tilde e^q_1=0$.

Concerning \eqref{eq:S-minus-in-S-plus-times-V}, a correct formula for the lowest weight vector defining an embedding $U_-\to U_+\otimes V$
is
$$
\sum^n_{i=1}q^{i}e_i\otimes e_{n+i},
$$
while an embedding $U_-\to V\otimes U_+$ can be defined using
the lowest weight vector
$$
\sum^n_{j=1}q^{-j}e_{n+j}\otimes e_{j}.
$$
Again, this is easy to check similarly to the case $q=1$.

\begin{proof}[Proof of Theorem~\ref{thm:spin-2n-n-odd-still-good} for $q
  \neq 1$]
We need to
establish an analogue of Proposition~\ref{prop:g-star-h-compos}. The only difference from the case $q=1$ is that in the analogue of equation~\eqref{eq:inn-prod-lwv} we get an extra factor $q^{i-j+I_{n+i}(X_{i,j})+I_{n+j}(\{i\})}$. Since this factor is positive, we still have $g^* h \neq 0$, and the rest of the argument is the same
as for $q=1$.
\end{proof}
\bigskip

\begin{bibdiv}
\begin{biblist}

\bib{MR1378260}{article}{
      author={Banica, Teodor},
       title={Th{\'e}orie des repr{\'e}sentations du groupe quantique compact
  libre {${\rm O}(n)$}},
        date={1996},
        ISSN={0764-4442},
     journal={C. R. Acad. Sci. Paris S{\'e}r. I Math.},
      volume={322},
      number={3},
       pages={241\ndash 244},
      review={\MR{1378260 (97a:46108)}},
}

\bib{MR2023750}{article}{
    author = {Bichon, Julien},
     title = {The representation category of the quantum group of a
              non-degenerate bilinear form},
   journal = {Comm. Algebra},
    volume = {31},
      date = {2003},
    number = {10},
     pages = {4831--4851},
      ISSN = {0092-7872},
    review = {\MR{1998031 (2004g:20068)}},
       doi = {10.1081/AGB-120023135},
       url = {http://dx.doi.org/10.1081/AGB-120023135},
}

\bib{MR1153249}{book}{
      author={Fulton, William},
      author={Harris, Joe},
       title={Representation theory},
      series={Graduate Texts in Mathematics},
   publisher={Springer-Verlag},
     address={New York},
        date={1991},
      volume={129},
        ISBN={0-387-97527-6; 0-387-97495-4},
         url={http://dx.doi.org/10.1007/978-1-4612-0979-9},
         doi={10.1007/978-1-4612-0979-9},
        note={A first course, Readings in Mathematics},
      review={\MR{1153249 (93a:20069)}},
}

\bib{MR1809304}{article}{
      author={Hai, Phung~Ho},
       title={On matrix quantum groups of type {$A_n$}},
        date={2000},
        ISSN={0129-167X},
     journal={Internat. J. Math.},
      volume={11},
      number={9},
       pages={1115\ndash 1146},
         url={http://dx.doi.org/10.1142/S0129167X00000581},
         doi={10.1142/S0129167X00000581},
      review={\MR{1809304 (2001m:16064)}},
}

\bib{MR1209960}{article}{
      author={Handelman, David},
       title={Representation rings as invariants for compact groups and limit
  ratio theorems for them},
        date={1993},
        ISSN={0129-167X},
     journal={Internat. J. Math.},
      volume={4},
      number={1},
       pages={59\ndash 88},
         url={http://dx.doi.org/10.1142/S0129167X93000054},
         doi={10.1142/S0129167X93000054},
      review={\MR{1209960 (94c:22005)}},
}

\bib{MR1916370}{article}{
      author={Izumi, Masaki},
       title={Non-commutative {P}oisson boundaries and compact quantum group
  actions},
        date={2002},
        ISSN={0001-8708},
     journal={Adv. Math.},
      volume={169},
      number={1},
       pages={1\ndash 57},
         url={http://dx.doi.org/10.1006/aima.2001.2053},
         doi={10.1006/aima.2001.2053},
      review={\MR{1916370 (2003j:46105)}},
}

\bib{MR3266525}{article}{
      author={Jordans, Bas P.~A.},
       title={A classification of {$SU(d)$}-type {$\rm C^*$}-tensor
  categories},
        date={2014},
        ISSN={0129-167X},
     journal={Internat. J. Math.},
      volume={25},
      number={9},
       pages={1450081 (40pp.)},
         url={http://dx.doi.org/10.1142/S0129167X14500815},
         doi={10.1142/S0129167X14500815},
      review={\MR{3266525}},
      eprint={\href{http://arxiv.org/abs/1404.4204}{{\tt arXiv:1404.4204
  [math.OA]}}},
}

\bib{MR3207584}{article}{
      author={Kazhdan, David},
      author={Larsen, Michael},
      author={Varshavsky, Yakov},
       title={The {T}annakian formalism and the {L}anglands conjectures},
        date={2014},
        ISSN={1937-0652},
     journal={Algebra Number Theory},
      volume={8},
      number={1},
       pages={243\ndash 256},
      eprint={\href{http://arxiv.org/abs/1006.3864}{{\tt arXiv:1006.3864}}},
         url={http://dx.doi.org/10.2140/ant.2014.8.243},
         doi={10.2140/ant.2014.8.243},
      review={\MR{3207584}},
}

\bib{MR1237835}{incollection}{
      author={Kazhdan, David},
      author={Wenzl, Hans},
       title={Reconstructing monoidal categories},
        date={1993},
   booktitle={I. {M}. {G}el\cprime fand {S}eminar},
      series={Adv. Soviet Math.},
      volume={16},
   publisher={Amer. Math. Soc.},
     address={Providence, RI},
       pages={111\ndash 136},
      review={\MR{1237835 (95e:18007)}},
}

\bib{MR1190512}{incollection}{
      author={Landstad, Magnus~B.},
       title={Ergodic actions of nonabelian compact groups},
        date={1992},
   booktitle={Ideas and methods in mathematical analysis, stochastics, and
  applications ({O}slo, 1988)},
   publisher={Cambridge Univ. Press, Cambridge},
       pages={365\ndash 388},
      review={\MR{1190512 (93j:46072)}},
}

\bib{MR1116413}{article}{
   author={Levendorski{\u\i}, Serge},
   author={Soibelman, Yan},
   title={Algebras of functions on compact quantum groups, Schubert cells
   and quantum tori},
   journal={Comm. Math. Phys.},
   volume={139},
   date={1991},
   number={1},
   pages={141--170},
   issn={0010-3616},
   review={\MR{1116413 (92h:58020)}},
}

\bib{MR733774}{article}{
      author={McMullen, John~R.},
       title={On the dual object of a compact connected group},
        date={1984},
        ISSN={0025-5874},
     journal={Math. Z.},
      volume={185},
      number={4},
       pages={539\ndash 552},
         url={http://dx.doi.org/10.1007/BF01236263},
         doi={10.1007/BF01236263},
      review={\MR{733774 (85e:22010)}},
}

\bib{MR3275027}{article}{
      author={Mrozinski, Colin},
       title={Quantum groups of {$\rm GL(2)$} representation type},
        date={2014},
        ISSN={1661-6952},
     journal={J. Noncommut. Geom.},
      volume={8},
      number={1},
       pages={107\ndash 140},
      eprint={\href{http://arxiv.org/abs/1201.3494}{{\tt arXiv:1201.3494}}},
         url={http://dx.doi.org/10.4171/JNCG/150},
         doi={10.4171/JNCG/150},
      review={\MR{3275027}},
}

\bib{MR3240820}{article}{
      author={Mrozinski, Colin},
       title={Quantum automorphism groups and {$SO(3)$}-deformations},
        date={2015},
        ISSN={0022-4049},
     journal={J. Pure Appl. Algebra},
      volume={219},
      number={1},
       pages={1\ndash 32},
      eprint={\href{http://arxiv.org/abs/1303.7091}{{\tt arXiv:1303.7091}}},
         url={http://dx.doi.org/10.1016/j.jpaa.2014.04.006},
         doi={10.1016/j.jpaa.2014.04.006},
      review={\MR{3240820}},
}

\bib{MR2782190}{article}{
      author={Neshveyev, Sergey},
      author={Tuset, Lars},
       title={Symmetric invariant cocycles on the duals of {$q$}-deformations},
        date={2011},
        ISSN={0001-8708},
     journal={Adv. Math.},
      volume={227},
      number={1},
       pages={146\ndash 169},
      eprint={\href{http://arxiv.org/abs/0902.2365}{{\tt arXiv:0902.2365
  [math.QA]}}},
         url={http://dx.doi.org/10.1016/j.aim.2011.01.017},
         doi={10.1016/j.aim.2011.01.017},
      review={\MR{2782190}},
}

\bib{MR2844801}{article}{
      author={Neshveyev, Sergey},
      author={Tuset, Lars},
       title={On second cohomology of duals of compact groups},
        date={2011},
        ISSN={0129-167X},
     journal={Internat. J. Math.},
      volume={22},
      number={9},
       pages={1231\ndash 1260},
      eprint={\href{http://arxiv.org/abs/1011.4569}{{\tt arXiv:1011.4569
  [math.OA]}}},
         url={http://dx.doi.org/10.1142/S0129167X11007239},
         doi={10.1142/S0129167X11007239},
      review={\MR{2844801 (2012k:22010)}},
}

\bib{MR2959039}{article}{
      author={Neshveyev, Sergey},
      author={Tuset, Lars},
       title={Autoequivalences of the tensor category of {$U_q\germ
  g$}-modules},
        date={2012},
        ISSN={1073-7928},
     journal={Int. Math. Res. Not. IMRN},
      number={15},
       pages={3498\ndash 3508},
      eprint={\href{http://arxiv.org/abs/1012.4700}{{\tt arXiv:1012.4700
  [math.QA]}}},
      review={\MR{2959039}},
}

\bib{MR2914062}{article}{
      author={Neshveyev, Sergey},
      author={Tuset, Lars},
       title={Quantized algebras of functions on homogeneous spaces with
  {P}oisson stabilizers},
        date={2012},
        ISSN={0010-3616},
     journal={Comm. Math. Phys.},
      volume={312},
      number={1},
       pages={223\ndash 250},
      eprint={\href{http://arxiv.org/abs/1103.4346}{{\tt arXiv:1103.4346
  [math.OA]}}},
         url={http://dx.doi.org/10.1007/s00220-012-1455-6},
         doi={10.1007/s00220-012-1455-6},
      review={\MR{2914062}},
}

\bib{neshveyev-tuset-book}{book}{
      author={Neshveyev, Sergey},
      author={Tuset, Lars},
      title={Compact quantum groups and their representation categories},
      series={Cours Sp\'ecialis\'es [Specialized Courses]},
      volume={20},
      publisher={Soci\'et\'e Math\'ematique de France, Paris},
      date={2013},
      pages={168},
      isbn={978-2-85629-777-3},
      note={preliminary version available at \url{http://folk.uio.no/sergeyn/papers/CQGRC.pdf}},
}

\bib{MR3291643}{article}{
   author={Neshveyev, Sergey},
   author={Yamashita, Makoto},
   title={Categorical duality for Yetter-Drinfeld algebras},
   journal={Doc. Math.},
   volume={19},
   date={2014},
   pages={1105--1139},
   issn={1431-0635},
   review={\MR{3291643}},
   eprint={\href{http://arxiv.org/abs/1310.4407}{{\tt arXiv:1310.4407
  [math.OA]}}},
}

\bib{poisson-bdry-monoidal-cat}{misc}{
      author={Neshveyev, Sergey},
      author={Yamashita, Makoto},
       title={Poisson boundary of monoidal categories},
         how={preprint},
        date={2014},
      eprint={\href{http://arxiv.org/abs/1405.6572v1}{{\tt arXiv:1405.6572v1 [math.OA]}}},
}

\bib{MR3340190}{article}{
   author={Neshveyev, Sergey},
   author={Yamashita, Makoto},
   title={Twisting the $q$-deformations of compact semisimple Lie groups},
   journal={J. Math. Soc. Japan},
   volume={67},
   date={2015},
   number={2},
   pages={637--662},
   issn={0025-5645},
   review={\MR{3340190}},
   doi={10.2969/jmsj/06720637},
      eprint={\href{http://arxiv.org/abs/1305.6949}{{\tt arXiv:1305.6949
  [math.OA]}}},
}

\bib{MR1673475}{article}{
      author={Ohn, Christian},
       title={Quantum {${\rm SL}(3,{\bf C})$}'s with classical representation
  theory},
        date={1999},
        ISSN={0021-8693},
     journal={J. Algebra},
      volume={213},
      number={2},
       pages={721\ndash 756},
      eprint={\href{http://arxiv.org/abs/q-alg/9711005}{{\tt
  arXiv:q-alg/9711005 [math.QA]}}},
         url={http://dx.doi.org/10.1006/jabr.1998.7658},
         doi={10.1006/jabr.1998.7658},
      review={\MR{1673475 (2000c:17028)}},
}

\bib{MR2106933}{incollection}{
      author={Ohn, Christian},
       title={Quantum {${\rm SL}(3,\mathbb{C})$}'s: the missing case},
        date={2005},
   booktitle={Hopf algebras in noncommutative geometry and physics},
      series={Lecture Notes in Pure and Appl. Math.},
      volume={239},
   publisher={Dekker},
     address={New York},
       pages={245\ndash 255},
      review={\MR{2106933 (2005h:20110)}},
      eprint={\href{http://arxiv.org/abs/math/02100244}{{\tt arXiv:math/02100244}}},
}

\bib{MR2307417}{article}{
      author={Pinzari, Claudia},
       title={The representation category of the {W}oronowicz quantum group
  {${\rm S}_\mu {\rm U}(d)$} as a braided tensor {$C^*$}-category},
        date={2007},
        ISSN={0129-167X},
     journal={Internat. J. Math.},
      volume={18},
      number={2},
       pages={113\ndash 136},
      eprint={\href{http://arxiv.org/abs/math/0602295}{{\tt arXiv:math/0602295
  [math.OA]}}},
         url={http://dx.doi.org/10.1142/S0129167X07003972},
         doi={10.1142/S0129167X07003972},
      review={\MR{2307417 (2008k:46212)}},
}

\bib{MR2825504}{article}{
      author={Pinzari, Claudia},
      author={Roberts, John~E.},
       title={A rigidity result for extensions of braided tensor
  {$C^*$}-categories derived from compact matrix quantum groups},
        date={2011},
        ISSN={0010-3616},
     journal={Comm. Math. Phys.},
      volume={306},
      number={3},
       pages={647\ndash 662},
      eprint={\href{http://arxiv.org/abs/1007.4480}{{\tt arXiv:1007.4480
  [mtah.OA]}}},
         url={http://dx.doi.org/10.1007/s00220-011-1260-7},
         doi={10.1007/s00220-011-1260-7},
      review={\MR{2825504 (2012h:46125)}},
}

\bib{MR1629723}{article}{
      author={Podle{\'s}, P.},
      author={M{\"u}ller, E.},
       title={Introduction to quantum groups},
        date={1998},
        ISSN={0129-055X},
     journal={Rev. Math. Phys.},
      volume={10},
      number={4},
       pages={511\ndash 551},
         url={http://dx.doi.org/10.1142/S0129055X98000173},
         doi={10.1142/S0129055X98000173},
      review={\MR{1629723 (99g:46106)}},
}

\bib{MR1068378}{article}{
      author={Rosso, Marc},
       title={Alg{\`e}bres enveloppantes quantifi{\'e}es, groupes quantiques
  compacts de matrices et calcul diff{\'e}rentiel non commutatif},
        date={1990},
        ISSN={0012-7094},
     journal={Duke Math. J.},
      volume={61},
      number={1},
       pages={11\ndash 40},
         url={http://dx.doi.org/10.1215/S0012-7094-90-06102-2},
         doi={10.1215/S0012-7094-90-06102-2},
      review={\MR{1068378 (92i:17022)}},
}

\bib{MR2210362}{article}{
      author={So{\l}tan, Piotr~M.},
       title={Quantum {B}ohr compactification},
        date={2005},
        ISSN={0019-2082},
     journal={Illinois J. Math.},
      volume={49},
      number={4},
       pages={1245\ndash 1270},
      eprint={\href{http://arxiv.org/abs/0810.0596}{{\tt arXiv:0810.0596
  [math.OA]}}},
         url={http://projecteuclid.org/getRecord?id=euclid.ijm/1258138137},
      review={\MR{2210362 (2007a:46079)}},
}

\bib{MR2335776}{article}{
      author={Tomatsu, Reiji},
       title={A characterization of right coideals of quotient type and its
  application to classification of {P}oisson boundaries},
        date={2007},
        ISSN={0010-3616},
     journal={Comm. Math. Phys.},
      volume={275},
      number={1},
       pages={271\ndash 296},
      eprint={\href{http://arxiv.org/abs/math/0611327}{{\tt arXiv:math/0611327
  [math.OA]}}},
         url={http://dx.doi.org/10.1007/s00220-007-0267-6},
         doi={10.1007/s00220-007-0267-6},
      review={\MR{2335776 (2008j:46058)}},
}

\bib{MR2132671}{article}{
      author={Tuba, Imre},
      author={Wenzl, Hans},
       title={On braided tensor categories of type {$BCD$}},
        date={2005},
        ISSN={0075-4102},
     journal={J. Reine Angew. Math.},
      volume={581},
       pages={31\ndash 69},
      eprint={\href{http://arxiv.org/abs/math/0301142}{{\tt arXiv:math/0301142
  [math.QA]}}},
         url={http://dx.doi.org/10.1515/crll.2005.2005.581.31},
         doi={10.1515/crll.2005.2005.581.31},
      review={\MR{2132671 (2006b:18003)}},
}

\bib{MR996457}{incollection}{
      author={Wassermann, Antony},
       title={Coactions and {Y}ang-{B}axter equations for ergodic actions and
  subfactors},
        date={1988},
   booktitle={Operator algebras and applications, {V}ol.\ 2},
      series={London Math. Soc. Lecture Note Ser.},
      volume={136},
   publisher={Cambridge Univ. Press},
     address={Cambridge},
       pages={203\ndash 236},
      review={\MR{MR996457 (92d:46167)}},
}

\bib{MR990110}{article}{
      author={Wassermann, Antony},
       title={Ergodic actions of compact groups on operator algebras. {II}.
  {C}lassification of full multiplicity ergodic actions},
        date={1988},
        ISSN={0008-414X},
     journal={Canad. J. Math.},
      volume={40},
      number={6},
       pages={1482\ndash 1527},
      review={\MR{MR990110 (92d:46168)}},
}

\bib{MR948104}{article}{
      author={Wassermann, Antony},
       title={Ergodic actions of compact groups on operator algebras. {III}.
  {C}lassification for {${\rm SU}(2)$}},
        date={1988},
        ISSN={0020-9910},
     journal={Invent. Math.},
      volume={93},
      number={2},
       pages={309\ndash 354},
         url={http://dx.doi.org/10.1007/BF01394336},
         doi={10.1007/BF01394336},
      review={\MR{MR948104 (91e:46093)}},
}

\bib{MR943923}{article}{
      author={Woronowicz, S.~L.},
       title={Tannaka-{K}re\u\i n duality for compact matrix pseudogroups.
  {T}wisted {${\rm SU}(N)$} groups},
        date={1988},
        ISSN={0020-9910},
     journal={Invent. Math.},
      volume={93},
      number={1},
       pages={35\ndash 76},
         url={http://dx.doi.org/10.1007/BF01393687},
         doi={10.1007/BF01393687},
      review={\MR{943923 (90e:22033)}},
}

\bib{MR1266253}{article}{
      author={Woronowicz, S.~L.},
      author={Zakrzewski, S.},
       title={Quantum deformations of the {L}orentz group. {T}he {H}opf
  {$^*$}-algebra level},
        date={1994},
        ISSN={0010-437X},
     journal={Compositio Math.},
      volume={90},
      number={2},
       pages={211\ndash 243},
         url={http://www.numdam.org/item?id=CM_1994__90_2_211_0},
      review={\MR{1266253 (95b:17021)}},
}

\bib{MR3009718}{article}{
      author={Yamashita, Makoto},
       title={Equivariant comparison of quantum homogeneous spaces},
        date={2013},
        ISSN={0010-3616},
     journal={Comm. Math. Phys.},
      volume={317},
      number={3},
       pages={593\ndash 614},
      eprint={\href{http://arxiv.org/abs/1109.2991}{{\tt arXiv:1109.2991
  [math.OA]}}},
         url={http://dx.doi.org/10.1007/s00220-012-1594-9},
         doi={10.1007/s00220-012-1594-9},
      review={\MR{3009718}},
}

\end{biblist}
\end{bibdiv}

\bigskip

\end{document}